\documentclass[a4paper,12pt]{amsart}
\usepackage[margin=2cm]{geometry}
\usepackage{amscd,amsmath,amsthm}
\usepackage{fouriernc} 
\usepackage[mathscr]{euscript}
\usepackage{stmaryrd}
\usepackage{MnSymbol} 
\usepackage{enumerate}
\usepackage{color}
\usepackage[pdfusetitle,unicode]{hyperref}
\usepackage[cmtip,all]{xy} 
\usepackage{enumitem}
\usepackage{datetime}
\usepackage{setspace}					
\usepackage{microtype}					
\usepackage{graphicx}					
\usepackage{epsfig}						
\usepackage{verbatim}
\usepackage[english]{babel}		
\usepackage[utf8]{inputenc} 				
\usepackage{enumitem}					
\usepackage{xspace} 						
\hypersetup{linktoc=page}
\def\resp{{\sfcode`\.1000 resp.}}
\def\ie{{\sfcode`\.1000 i.e.}}
\def\eg{{\sfcode`\.1000 e.g.}}

\def\cf{{\sfcode`\.1000 cf.}}
\def\Cf{{\sfcode`\.1000 Cf.}}

\newcommand{\mathletter}[1]{\mathbf{#1}}
\newcommand{\N}{\mathletter{N}}				
\newcommand{\Z}{\mathletter{Z}}				
\newcommand{\Q}{\mathletter{Q}}				
\newcommand{\F}{\mathletter{F}}				
\newcommand{\Pe}{\mathletter{P}}
\renewcommand{\P}{\Pe}

\newcommand{\A}{\mathletter{A}}
\newcommand{\Bb}{\mathletter{B}}

\renewcommand{\to}[1][]{\overset{#1}{\rightarrow}}		
\newcommand{\To}[1][]{\overset{#1}{\longrightarrow}}	
\newcommand{\inj}[1][]{\overset{#1}{\hookrightarrow}}		
\newcommand{\surj}[1][]{\overset{#1}{\twoheadrightarrow}}		
\newcommand{\oT}[1][]{\overset{#1}{\longleftarrow}} 			
\newcommand{\iso}[1][]{\underset{#1}{\To[\cong]}}		 	

\newcommand{\inv}{^{-1}}								
\newcommand{\Colon}{\,\colon\,}
\newcommand{\skp}[1]{\langle #1\rangle}				


\DeclareMathOperator{\Hom}{Hom}				
\DeclareMathOperator{\Fun}{Fun}				
\DeclareMathOperator{\coker}{coker}			
\DeclareMathOperator{\Ho}{H}					
\DeclareMathOperator{\GL}{GL}				
\DeclareMathOperator{\Bl}{Bl}				

\DeclareMathOperator{\K}{K}						
\DeclareMathOperator{\BGL}{BGL} 					

\DeclareMathOperator{\KH}{KH}					

\newcommand{\Kcont}{\K^\mathrm{cont}}			
\newcommand{\on}{\,\,\mathrm{on}\,\,}			

\newcommand{\df}[1]{\textbf{#1}}					
\newcommand{\etale}{\'{e}tale}    					
\newcommand{\infcat}{$\infty$-category}
\newcommand{\infcats}{$\infty$-categories}
\newcommand{\inftop}{$\infty$-topos}

\newcommand{\Cech}{\v{C}ech}

\renewcommand{\phi}{\varphi}
\renewcommand{\epsilon}{\varepsilon}
\renewcommand{\rho}{\varrho}

\theoremstyle{definition}				
	\newtheorem{defin}{Definition}[section]
	\newtheorem{example}[defin]{Example}
	\newtheorem{remark}[defin]{Remark}
	\newtheorem{caveat}[defin]{Caveat}

	\newtheorem{reminder}[defin]{Reminder}
	\newtheorem*{notation*}{Notation}
	
\theoremstyle{plain} 					
	\newtheorem{thm}[defin]{Theorem}
	\newtheorem*{thm*}{Theorem}
	\newtheorem{prop}[defin]{Proposition}
	\newtheorem{lemma}[defin]{Lemma}
	\newtheorem{cor}[defin]{Corollary}
	\newtheorem*{cor*}{Corollary}
	
	\newtheorem{conj}[defin]{Conjecture}
		\newcounter{zaehler}
	
	\newtheorem{introthm}[zaehler]{Theorem}
	\newtheorem{introcor}[zaehler]{Corollary}
\theoremstyle{remark} 					

\newcounter{formulanumber}[defin]

\newcommand{\vquad}{\vspace{6pt}}
\newcommand{\absatz}{\vquad\noindent}

\DeclareMathOperator*{\colim}{colim}

\DeclareMathOperator{\fib}{fib}



\DeclareMathOperator*{\prolim}{``lim''}


\renewcommand{\O}{\mathcal{O}}		
			
\newcommand{\Acal}{\mathcal{A}}

\newcommand{\Ccal}{\mathcal{C}}
\newcommand{\Dcal}{\mathcal{D}}

\newcommand{\Ical}{\mathcal{I}}

\newcommand{\Ucal}{\mathcal{U}}
\newcommand{\Vcal}{\mathcal{V}}
\newcommand{\Xcal}{\mathcal{X}}


\hyphenation{non-archi-medean}
\newcommand{\na}{nonarchimedean}

\newcommand{\Anod}{{A_0}}


\newcommand{\cdh}{\mathrm{cdh}}
\newcommand{\Hcdh}{\Ho_{\cdh}}

\newcommand{\Zar}{\mathrm{Zar}}
\newcommand{\Hzar}{\Ho_{\Zar}}

\newcommand{\sing}{\mathrm{sing}}
\newcommand{\Hsing}{\Ho_{\sing}}

\newcommand{\biZar}{\mathrm{biZar}}
\newcommand{\Hbizar}{\Ho_{\biZar}}
\newcommand{\rh}{\mathrm{rh}}
\newcommand{\Hrh}{\Ho_{\rh}}

\DeclareMathOperator{\Ll}{L}
\renewcommand{\L}{\Ll}

\newcommand{\Lrh}{\L_\rh}

\DeclareMathOperator{\aL}{a}
\newcommand{\arh}{\aL_\rh}

\newcommand{\op}{^\mathrm{op}}
\newcommand{\aff}{^\mathrm{aff}}

\newcommand{\ad}{^\mathrm{ad}}

\newcommand{\berk}{^\mathrm{berk}}

\newcommand{\red}{_\mathrm{red}}

\newcommand{\mathcat}[1]{\mathrm{#1}}
\newcommand{\Ab}{\mathcat{Ab}}

\newcommand{\Sch}{\mathcat{Sch}}
\newcommand{\Schqc}{\Sch^\mathrm{qc}}

\newcommand{\Vect}{\mathcat{Vec}}

\newcommand{\Rig}{\mathcat{Rig}}

\newcommand{\FSch}{\mathcat{FSch}}
\newcommand{\Mdf}{\mathcat{Mdf}}
\newcommand{\BL}{\mathcat{Bl}}
\newcommand{\Adm}{\mathcat{Adm}}

\newcommand{\Shh}{\mathcat{Sh}}

\newcommand{\mathinfcat}[1]{\mathbf{#1}}
\newcommand{\Spc}{\mathinfcat{Spc}}
\newcommand{\Sp}{\mathinfcat{Sp}}
\newcommand{\Sh}{\mathinfcat{Sh}}
\newcommand{\PSh}{\mathinfcat{PSh}}
\newcommand{\Pro}{\mathinfcat{Pro}}

\newcommand{\Perf}{\mathinfcat{Perf}}

\DeclareMathOperator{\Spec}{Spec}		
\DeclareMathOperator{\Spm}{Spm}			
\DeclareMathOperator{\Spb}{Spb}			
\DeclareMathOperator{\Spf}{Spf}			
\DeclareMathOperator{\Spa}{Spa}			

\hyphenation{non-archi-medean}
\hyphenation{Ber-ko-vich}
\hyphenation{mod-i-fi-ca-tion}
\hyphenation{mod-i-fi-ca-tions}

\newcommand{\carre}[4]{\[\begin{xy}\xymatrix{#1\ar[r]\ar[d]&#2\ar[d]\\#3\ar[r]&#4}\end{xy}\]}
\newcommand{\carrelift}[4]{\[\begin{xy}\xymatrix{#1\ar[r]\ar[d]&#2\ar[d]\\#3\ar[r]\ar@{.>}[ur]&#4}\end{xy}\]}
\newcommand{\carretag}[5][]{\[\tag{#1}\begin{xy}\xymatrix{#2\ar[r]\ar[d]&#3\ar[d]\\#4\ar[r]&#5}\end{xy}\]}

\newcommand{\lift}[3]{\[\begin{xy}\xymatrix{&#1\ar[d]\\#2\ar[r]\ar@{.>}[ur]&#3}\end{xy}\]}
\newcommand{\liftmap}[6]{\[\begin{xy}\xymatrix{&#1\ar[d]^{#2}\\#3\ar@{.>}[ur]^{#4}\ar[r]_{#5}&#6}\end{xy}\]}



\reversemarginpar		
\setlength{\marginparwidth}{2cm}

\newcommand{\stacks}[1]{\cite[\href{http://stacks.math.columbia.edu/tag/#1}{Tag #1}]{stacks-project}}

\title{Continuous K-Theory and Cohomology of Rigid Spaces}
\author{Christian Dahlhausen}
\urladdr{cdahlhausen.eu}
\thanks{
\!\!\!\!\!\!\!\!\emph{Address:} Mathematisches Institut, Im Neuenheimer Feld 205, 69120 Heidelberg, Germany.\\
\emph{Email:} cdahlhausen@mathi.uni-heidelberg.de\\
\emph{ORCID-ID:} 0000-0002-0478-969X
}
\hypersetup{
pdftitle={Continuous K-Theory and Cohomology of Rigid Spaces},
pdfauthor={Christian Dahlhausen},
pdfkeywords={K-Theory, Algebraic Geometry, Homotopy Theory},
pdfmenubar=true
}

\begin{document}
\begin{abstract}
We establish a connection between continuous K-theory and integral cohomology of rigid spaces.
Given a rigid analytic space over a complete discretely valued field, its continuous K-groups vanish in degrees below the negative of the dimension. Likewise, the cohomology groups vanish in degrees above the dimension.
The main result provides the existence of an isomorphism between the lowest possibly non-vanishing continuous K-group and the highest possibly non-vanishing cohomology group with integral coefficients.
A key role in the proof is played by a comparison between cohomology groups of an admissible Zariski-Riemann space with respect to different topologies; namely, the rh-topology which is related to K-theory as well as the Zariski topology whereon the cohomology groups in question rely.
\end{abstract}
\maketitle
\tableofcontents

	\section{Introduction}
The negative algebraic K-theory of a scheme is related to its singularities. If $X$ is a regular scheme, then $\K_{-i}(X)$ vanishes for $i>0$. For an arbitrary noetherian scheme $X$ of dimension $d$ we know that
\begin{enumerate}
	\item $\K_{-i}(X)=0$ for $i>d$,
	\item $\K_{-i}(X) \cong\K_{-i}(X\times\A^n)$ for $i\geq d$, $n\geq 1$, and
	\item $\K_{-d}(X) \cong \Hcdh^d(X;\Z)$.
\end{enumerate} 
The affine case of (i) was a question of Weibel \cite[2.9]{weibel-analytic} who proved (i) and (ii) for $d\leq 2$ \cite[2.3, 2.5, 4.4]{weibel-normal}. For varieties in characteristic zero (i)-(iii) were proven by Corti\~nas-Haesemeyer-Schlichting-Weibel \cite{chsw08} and the general case is due to Kerz-Strunk-Tamme \cite{kst}. As an example for the lowest possibly non-vanishing group $\K_{-d}(X)$, the cusp $C=\{y^2=x^3\}$ over a field has $\K_{-1}(C)=0$ whereas the node $N = \{y^2=x^3+x^2\}$ over a field (of characteristic not 2) has $\K_{-1}(N) = \Z$; more generally, for a nice curve the rank is the number of loops \cite[2.3]{weibel-normal}. 
The main result of this article are analogous statements of (i)-(iii) for continuous K-theory of rigid analytic spaces in the sense of Morrow \cite{morrow}.

\vquad\noindent There is a long history of versions of K-theory for topological rings that take the topology into account. For instance, the higher algebraic K-groups of a ring $A$ can be defined via the classifying space $\BGL(A)$ of the general linear group $\GL(A)$. If $A$ happens to be a Banach algebra over the complex numbers, it also makes sense to consider $\GL(A)$ as a topological group and to define \emph{topological K-theory} $\K^\mathrm{top}(A)$ analogously in terms of the classifying space $\BGL^\mathrm{top}(A)$. This yields a better behaved K-theory for complex Banach algebras which satisfies homotopy invariance and excision (which does not hold true in general for algebraic K-theory). Unfortunately, a similar approach for \emph{\na} algebras does not behave well since the \na{} topology is totally disconnected. Karoubi-Villamayor \cite{kv71} and Calvo \cite{calvo} generalised topological K-theory to arbitrary Banach algebras (either \na{} or complex) in terms of the ring of power series converging on a unit disc.
A different approach is to study \df{continuous K-theory} which is the pro-spectrum 
	\begin{align*} 
	\Kcont(R) = \prolim_n\, \K(R/I^n)
	\end{align*}
where $R$ is an $I$-adic ring with respect to some ideal $I\subset R$ (\eg{} $\Z_p$ with the $p$-adic topology or $\F_p\llbracket t\rrbracket$ with the $t$-adic topology). Such ``continuous'' objects have been studied amply in the literature -- \cf{} Wagoner \cite{wagoner-cont, wagoner-cont-dvr},  Dundas \cite{dundas-cont}, Geisser-Hesselholt \cite{geisser-hesselholt-K-TC, geisser-hesselholt-K-F_p}, or Beilinson \cite{beilinson} -- and they were related by Bloch-Esnault-Kerz to the Hodge conjecture for abelian varieties \cite{bek-zero} and the p-adic variational Hodge conjecture \cite{bek-p-adic}. Morrow \cite{morrow} suggested an extension of continuous K-theory to rings $A$ admitting an open subring $A_0$ which is $I$-adic with respect to some ideal $I$ of $A_0$ (\eg{} $\Q_p = \Z_p[p\inv]$ or $\F_p(\!(t)\!) = \F_p\llbracket t\rrbracket[t\inv]$).\footnote{Actually, Morrow only talks about affinoid algebras.} This notion was recently studied by Kerz-Saito-Tamme \cite{kst18} and they showed that it coincides in non-positive degrees with the groups studied by Karoubi-Villamayor and Calvo. For an affinoid algebra $A$ over a discretely valued field, Kerz proved the corresponding analytical statements to (i) and (ii); that is replacing algebraic K-theory by continuous K-theory and the polynomial ring by the ring of power series converging on a unit disc \cite{kerz-icm}. Morrow showed that continuous K-theory extends to a sheaf of pro-spectra on rigid $k$-spaces for any discretely valued field $k$. The main result of this article provides analogous statements of (i)-(iii) above for continuous K-theory of rigid $k$-spaces; the statements (i) and (ii) extend Kerz' result to the global case and statement (iii) is entirely new.

\begin{introthm}[Theorem~\ref{global-main-thm}, Theorem~\ref{Kcont-vanishing--thm}]
Let $X$ be a quasi-compact and quasi-separated rigid $k$-space of dimension $d$ over a discretely valued field $k$. Then:
\begin{enumerate}
	\item For $i> d$ we have $\Kcont_{-i}(X)=0$.
	\item For $i\geq d$ and $n\geq 0$ the canonical map
	\[ \Kcont_{-i}(X) \to \Kcont_{-i}(X\times\mathbf{B}_k^n) \]
	is an isomorphism where $\mathbf{B}_k^n:=\Spm(k\skp{t_1,\ldots,t_n})$ is the rigid unit disc.
	\item If $d\geq 2$ or if there exists a formal model of $X$ which is algebraic (Definition~\ref{algebraic-formal--def}, \eg{} $X$ is affinoid or projective), then there exists an isomorphism 
	\[
	\Kcont_{-d}(X) \cong \Ho^d(X;\Z)
	\]
where the right-hand side is sheaf cohomology with respect to the admissible topology on the category of rigid $k$-spaces.
\end{enumerate}
\end{introthm}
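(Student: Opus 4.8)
The plan is to reduce the negative continuous K-groups to the negative K-theory of special fibres of formal models, to import parts (i) and (ii) from the results of Kerz--Strunk--Tamme through Kerz' affinoid case, and to prove (iii) by a chain of comparisons running through the Zariski--Riemann space $\skp{X}$ of $X$. The starting observation is that, for $i\geq 1$, negative K-theory annihilates nilpotent ideals (a classical consequence of Bass' fundamental theorem and idempotent lifting for $\K_0$), so the transition maps of the pro-system $\{\K_{-i}(R/\pi^{n})\}_{n}$ of a $\pi$-adic ring $R$ are isomorphisms; hence for an affinoid $k$-algebra $A$ with ring of integral elements $A^{\circ}$ one has $\Kcont_{-i}(A)\cong\K_{-i}(A^{\circ}/\pi A^{\circ})$, the negative K-theory of the special fibre of the canonical formal model, which is a scheme of finite type over the residue field $\kappa$ of dimension $\leq\dim A$, and since $\Kcont$ is a sheaf of pro-spectra for the admissible topology (Morrow~\cite{morrow}) this description glues over $X$. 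For (iii) we use in addition that, as $\Xcal$ ranges over \emph{all} admissible formal models of $X$, the special fibres $\Xcal_0$ assemble — by continuity of negative K-theory and of $\rh$-cohomology along the cofiltered system of admissible blow-ups — into invariants of the Zariski--Riemann space $\skp{X}=\lim_{\Xcal}\Xcal_0$.

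\textbf{Parts (i) and (ii).} Both are local on $X$ and already known for affinoids: by the above, $\Kcont_{-i}(A)$ is $\K_{-i}$ of the finite-type $\kappa$-scheme $\Spec(A^{\circ}/\pi)$ of dimension $\leq d$, so (i) and (ii) for affinoids follow from Weibel's vanishing conjecture and $\A^{n}$-invariance of Kerz--Strunk--Tamme~\cite{kst} applied to that scheme (the special fibre of $A\langle t_1,\dots,t_n\rangle$ being $\Spec\bigl((A^{\circ}/\pi)[t_1,\dots,t_n]\bigr)$), which is Kerz' theorem~\cite{kerz-icm}. To globalise, use the descent spectral sequence $E_2^{p,q}=\Ho^{p}(X;\underline{\Kcont_{q}})\Rightarrow\Kcont_{q-p}(X)$ for a finite admissible affinoid cover of $X$: in total degree $-i$ with $i\geq d$ only the finitely many columns with $-d\leq q\leq -i$ contribute (by the vanishing (i) on the affinoids of the cover, which have dimension $\leq d$), and each is controlled by the affinoid case — for (ii) additionally by the projection formula $R\pi_{\ast}\underline{\Kcont_{q}}\simeq\underline{\Kcont_{q}}$ for $\pi\colon X\times\mathbf{B}_k^{n}\to X$, itself the sheafified affinoid case of (ii). Hence (i) and (ii) hold for $X$.

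\textbf{Part (iii): the chain of isomorphisms.} One constructs
\[
\Kcont_{-d}(X)\;\cong\;\Hrh^{d}(\skp{X};\Z)\;\cong\;\Hzar^{d}(\skp{X};\Z)\;\cong\;\Ho^{d}(X;\Z).
\]
The first isomorphism is the analogue on the Zariski--Riemann space of the identification $\K_{-d}\cong\Hcdh^{d}$ of~\cite{kst}: negative K-theory satisfies pro-$\cdh$ descent on the special fibres $\Xcal_0$, which in the cofiltered limit over admissible blow-ups becomes $\rh$-descent on $\skp{X}$ (on whose valuation local rings the $\cdh$- and $\rh$-topologies induce the same cohomology), and since $\skp{X}$ has $\rh$-cohomological dimension $\leq d$ the descent spectral sequence degenerates in its bottom corner, so $\Kcont_{-d}(X)\cong\colim_{\Xcal}\Hcdh^{d}(\Xcal_0;\Z)\cong\Hrh^{d}(\skp{X};\Z)$. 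The last isomorphism is the comparison, furnished by Raynaud's theory of formal models, between abelian sheaf cohomology on the admissible site of $X$ and Zariski cohomology of its Zariski--Riemann space; for the constant sheaf $\Z$ only the underlying spectral space intervenes, and the specialization map identifies $\Ho^{d}(X;\Z)$ with $\Hzar^{d}(\skp{X};\Z)$. This reduces (iii) to the middle isomorphism.

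\textbf{The crux.} The remaining comparison $\Hrh^{d}(\skp{X};\Z)\cong\Hzar^{d}(\skp{X};\Z)$ — between the $\rh$-topology, which carries the K-theoretic content, and the Zariski topology, which carries the sheaf cohomology of the rigid space — is the technical heart of the proof. Using $\Hzar^{d}(\skp{X};\Z)=\colim_{\Xcal}\Hzar^{d}(\Xcal_0;\Z)$ (continuity of Zariski cohomology of the constant sheaf along the cofiltered limit) together with the identification $\Hrh^{d}(\skp{X};\Z)=\colim_{\Xcal}\Hcdh^{d}(\Xcal_0;\Z)$ above, the claim amounts to the natural map $\colim_{\Xcal}\Hzar^{d}(\Xcal_0;\Z)\to\colim_{\Xcal}\Hcdh^{d}(\Xcal_0;\Z)$ being an isomorphism: every $\cdh$-cohomology class on the special fibre of a formal model becomes represented by a Zariski class after a further \emph{admissible} formal blow-up, and nothing dies in the limit. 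The surjectivity is where resolution-type input enters — one must realise enough modifications of the special fibre (separating its irreducible components so that the cohomology seen by the blow-up topologies becomes Zariski-locally visible) by admissible formal blow-ups — and this is precisely where the hypothesis ``$d\geq 2$ or $X$ admits an algebraic formal model'' (Definition~\ref{algebraic-formal--def}) is used: for an algebraic model one spreads a resolution of its excellent finite-type special fibre out over $\mathcal{O}_k$, whereas for $d\geq 2$ a codimension estimate suffices in the top degree $d$. I expect this comparison — making precise the $\rh$-topology on $\skp{X}$, its cohomological dimension, the behaviour of $\Hrh^{d}$ under passage to the limit, and the requisite supply of admissible blow-ups — to be the main obstacle; everything else is assembled from standard descent and continuity properties of K-theory and from the dictionary relating rigid spaces and their Zariski--Riemann spaces.
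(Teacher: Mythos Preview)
Your overall architecture for (iii) --- pass to the Zariski--Riemann space, identify the lowest K-group with $\Hrh^d$, compare rh- with Zariski cohomology, and then identify the latter with $\Ho^d(X;\Z)$ via Raynaud/adic spaces --- is exactly the paper's. But two steps are wrong, and the second one is the important one.

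\textbf{The single-model identification is false.} You assert that for an affinoid $A$ one has $\Kcont_{-i}(A)\cong \K_{-i}(A^{\circ}/\pi)$. Nil-invariance gives $\Kcont_{-i}(A^{\circ})\cong \K_{-i}(A^{\circ}/\pi)$, but the step from $\Kcont_{-i}(A^{\circ})$ to $\Kcont_{-i}(A)$ is governed by the fibre sequence $\K(A^{\circ}\,\mathrm{on}\,\pi)\to\Kcont(A^{\circ})\to\Kcont(A)$, and $\K_{-i}(A^{\circ}\,\mathrm{on}\,\pi)$ has no reason to vanish for a single model. What is true, and what the paper uses, is Kerz' result that $\colim_{X'}\K_{-i}(X'\,\mathrm{on}\,\pi)=0$ for $i\geq 1$ where $X'$ ranges over \emph{all} admissible blow-ups of $\Spec(A^{\circ})$; only then does one get $\Kcont_{-i}(A)\cong \colim_{X'}\K_{-i}(X'/\pi)$. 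This is not a cosmetic point: the passage through the full system of models is precisely what forces the Zariski--Riemann space into the argument, and it is also where the hypothesis in (iii) actually bites --- see next.

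\textbf{The hypothesis ``$d\geq 2$ or algebraic model'' is not where you put it.} You locate it in the rh/Zariski comparison on $\skp{X}$, invoking resolution and a codimension argument in top degree. In the paper this comparison (Theorem~\ref{Hzar=Hrh--thm}/Lemma~\ref{Hzar=Hrh-formal--lem}) holds for the constant sheaf in \emph{all} degrees, with no hypothesis on $d$ and no resolution input; its proof is a direct refinement argument (Proposition~\ref{closed-cdh--prop}) showing that any proper rh-cover of a special fibre becomes a closed cover after a further admissible blow-up, reducing to the biZariski topology. The hypothesis enters instead at the very first step of (iii): one needs $\colim_{\Xcal'}\Kcont_{-n}(\Xcal'\,\mathrm{on}\,\pi)=0$ in the two relevant degrees to conclude $\Kcont_{-d}(X)\cong\colim_{\Xcal'}\Kcont_{-d}(\Xcal')$. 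For an algebraic formal model this vanishing holds for all $n\geq 1$ because one can compute $\Kcont(\Xcal'\,\mathrm{on}\,\pi)$ by algebraic $\K(X'\,\mathrm{on}\,\pi)$ and invoke Kerz' schematic vanishing (Corollary~\ref{algebraic_colimit_vanishing--cor}). For a general formal model one only gets vanishing for $n\geq 2$ by a Mayer--Vietoris bootstrap from the quasi-affine case (Lemma~\ref{formal_colimit_vanishing--lemma}), whence the restriction $d\geq 2$. Your globalisation of (i) and (ii) by a descent spectral sequence is a plausible alternative to the paper's route through the same fibre sequence, but as written it rests on the incorrect single-model identification and on unspecified control of intersections in the quasi-separated case.
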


There are several approaches to \na{} analytic geometry. Our proof uses rigid analytic spaces in the sense of Tate \cite{tate71} and adic spaces introduced by Huber \cite{huber94}. Another approach is the one of Berkovich spaces \cite{berkovich90} for which there is also a version of our main result as conjectured in the affinoid case by Kerz \cite[Conj.~14]{kerz-icm}.

\begin{introcor}[Corollary~\ref{global-main-thm-cor}]
Let $X$ be a quasi-compact and quasi-separated rigid analytic space of dimension $d$ over a discretely valued field. Assume that $d\geq 2$ or that there exists a formal model of $X$ which is algebraic (\eg{} $X$ is affinoid or projective). Then there is an isomorphism 
	\[
	\Kcont_{-d}(X) \cong \Ho^d(X\berk;\Z)
	\]
where $X\berk$ is the Berkovich space associated with $X$. 
\end{introcor}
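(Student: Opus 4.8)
The plan is to reduce this to Theorem~\ref{global-main-thm}(iii)---whose hypotheses are precisely those assumed here---by identifying the sheaf cohomology of $X$ for the admissible topology with the sheaf cohomology of the topological space underlying the associated Berkovich space $X\berk$.

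The first step is to invoke the comparison between rigid analytic geometry and Berkovich geometry: for a quasi-separated rigid $k$-space $X$, there is a natural equivalence between the category of abelian sheaves on the small admissible site of $X$ and the category of abelian sheaves on the topological space underlying $X\berk$ (see \cite{berkovich90}). Concretely it is induced by the assignment sending an admissible open of $X$ to the corresponding open subset of $X\berk$, and it rests on the fact that the affinoid subdomains form a quasi-net generating the topology of $X\berk$. This equivalence is natural in $X$ and carries the constant sheaf $\Z$ to the constant sheaf $\Z$, so it yields isomorphisms $\Ho^n(X_{\adm};\Z)\cong\Ho^n(X\berk;\Z)$ for every $n\geq 0$, in particular for $n=d$.

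The second step is bookkeeping: in Theorem~\ref{global-main-thm}(iii) the group $\Ho^d(X;\Z)$ is sheaf cohomology computed in the big admissible site of all rigid $k$-spaces, whereas the equivalence above concerns the small admissible site of $X$. Since $X$ is representable and $\Z$ is the restriction of the constant sheaf, these agree by the standard comparison of cohomology in a big site with that in the slice site over a representable object. Concatenating the isomorphisms gives
\[
\Kcont_{-d}(X)\;\cong\;\Ho^d(X;\Z)\;\cong\;\Ho^d(X\berk;\Z),
\]
which is the assertion of the corollary.

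I do not expect a genuine obstacle here, since all the substance already lies in Theorem~\ref{global-main-thm}(iii). The only points requiring care are that the equivalence of topoi between the admissible site of $X$ and $X\berk$ holds in the stated quasi-compact and quasi-separated generality---it does, $X\berk$ then being compact Hausdorff---and that no hypothesis is lost when passing between the big and the small admissible site, which is routine. (Note also that $X\berk$ has the same dimension $d$ as $X$, so the cohomological degrees on the two sides match.)
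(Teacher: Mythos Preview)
Your argument is correct and, like the paper, reduces immediately to Theorem~\ref{global-main-thm} and then invokes a comparison of sheaf cohomology; the difference lies only in which comparison is used. The paper passes through the adic space: the proof of Theorem~\ref{global-main-thm} already identifies $\Ho^d(X;\Z)$ with $\Ho^d(X\ad;\Z)$, and then one uses that the constant sheaf $\Z$ is overconvergent, so that cohomology on $X\ad$ agrees with cohomology on $X\berk$ via the equivalence between overconvergent sheaves on the adic space and sheaves on the Berkovich space due to van~der~Put--Schneider (as in the affinoid case, Corollary~\ref{main-thm-cor}). You instead go directly from the admissible site of $X$ to the topological space $X\berk$ using Berkovich's equivalence of topoi. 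Both routes are standard and yield the same isomorphism; your route is arguably more direct since it avoids the intermediate adic space, while the paper's route has the minor advantage that the passage to $X\ad$ is already done inside the proof of the theorem, so only the overconvergence observation remains.
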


If $X$ is smooth over $k$ or the completion of a $k$-scheme of finite type, then there is an isomorphism 
	\[
	\Ho^d(X\berk;\Z) \cong \Hsing^d(X\berk;\Z)
	\]
with singular cohomology by results of Berkovich \cite{berkovich99} and Hrushovski-Loeser \cite{hrushovski-loeser}. The identification of Corollary B is very helpful since it is hard to actually compute K-groups whereas the cohomology of Berkovich spaces is amenable for computations. For instance, the group $\Ho^d(X\berk;\Z)$ is finitely generated since $X\berk$ has the homotopy type of a finite CW-complex; such a finiteness statement is usually unknown for K-theory.

\absatz
An important tool within the proof of Theorem A is the \df{admissible Zariski-Riemann space} $\skp{X}_U$ which we will associate, more generally, with every quasi-compact and quasi-separated scheme $X$ with open subscheme $U$. The admissible Zariski-Riemann space $\skp{X}_U$ is given by the limit of all $U$-modifications of $X$ in the category of locally ringed spaces (Definition~\ref{U-admissible_ZR-space--defin}). In our case of interest where $A$ is a reduced affinoid algebra and $A^\circ$ is its open subring of power-bounded elements, then we will set $X=\Spec(A^\circ)$ and $U=\Spec(A)$. 
We shall relate its Zariski cohomology to the cohomology with respect to the so-called \df{rh-topology}, \ie{} the minimal topology generated by the Zariski topology and abstract blow-up squares (Definition~\ref{rh-cdh-topology--defin}). To every topology $\tau$ on the category of schemes (\eg{} Zar, Nis, rh, cdh), there is a corresponding appropriate site $\Sch_\tau(\skp{X}_U)$ for the admissible Zariski-Riemann space (Definition~\ref{ZR-site--defin}). We show the following statement which is later used in the proof of Theorem A and which is the main new contribution of this article.

\begin{introthm}[Theorem~\ref{Hzar=Hrh--thm}]
For every constant abelian rh-sheaf $F$ on $\Sch(\skp{X}_U)$ the canonical map
	\[
	\Hzar^*(\skp{X}_U\setminus U;F) \To \Hrh^*(\skp{X}_U\setminus U;F)
	\]
is an isomorphism. In particular,
	\begin{align*}
		\colim_{X' \in \Mdf(X,U)} \Hzar^*(X'\setminus U;F) = \colim_{X' \in \Mdf(X,U)} \Hrh^*(X'\setminus U;F).
	\end{align*}
where $\Mdf(X,U)$ is the category of all $U$-modifications of $X$ and $X'\setminus U$ is equipped with the reduced scheme structure. The same statement also holds if one replaces `Zar' by `Nis' and `rh' by `cdh'.
\end{introthm}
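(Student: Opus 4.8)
The plan is to reduce the comparison of Zariski and rh-cohomology on the Zariski–Riemann space to the known comparison on each individual $U$-modification, using the fact that both cohomology theories turn the limit $\skp{X}_U = \lim_{X'\in\Mdf(X,U)} X'$ into a filtered colimit. First I would recall that on any qcqs scheme the rh-topology is the minimal topology generated by the Zariski topology and abstract blow-up squares, and that for a \emph{constant} sheaf $F$ the natural map $\Hzar^*(Y;F)\to\Hrh^*(Y;F)$ is an isomorphism whenever $Y$ is noetherian (this is a standard consequence of the fact that $\Hrh^*$ agrees with $\Hcdh^*$-type descent and that a constant sheaf has trivial higher cohomology along proper cdh-covers up to the contractibility of blow-ups; cf. the cdh-descent results underlying statements (i)–(iii) in the introduction). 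Concretely, each $X'\in\Mdf(X,U)$ is a noetherian scheme, hence so is the reduced closed subscheme $X'\setminus U$, so the comparison map is an isomorphism on each $X'\setminus U$.

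The second step is to pass to the limit. By the construction of the site $\Sch_\tau(\skp{X}_U)$ (Definition~\ref{ZR-site--defin}) and the standard continuity of cohomology under cofiltered limits of qcqs schemes with affine transition maps — applied on the Zariski side via the usual Grothendieck-style limit argument, and on the rh side via the analogous statement for the rh-site — one gets
\[
\Hzar^*(\skp{X}_U\setminus U;F) \;=\; \colim_{X'\in\Mdf(X,U)} \Hzar^*(X'\setminus U;F), \qquad \Hrh^*(\skp{X}_U\setminus U;F) \;=\; \colim_{X'\in\Mdf(X,U)} \Hrh^*(X'\setminus U;F).
\]
Combining this with the levelwise isomorphism from the first step yields the claim, and the displayed ``In particular'' equality is then just the content of these two colimit identifications. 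The Nisnevich/cdh case is handled identically: the Nisnevich–cdh comparison for constant sheaves on noetherian schemes is again classical, and the same limit argument applies verbatim since the cdh-site of $\skp{X}_U$ is built from the Nisnevich topology and abstract blow-ups in exactly the same way.

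The main obstacle I anticipate is the limit argument itself, i.e. justifying that $\Hrh^*$ (and $\Hcdh^*$) commutes with the cofiltered limit defining $\skp{X}_U$. For the Zariski topology this is the standard result on cohomology of limits of schemes, but the rh- and cdh-topologies are not defined by a limit-stable Grothendieck pretopology in the naive sense, so one must either (a) present rh-cohomology as a explicit (hyper)Čech or simplicial-resolution computation and check that each finite stage commutes with the limit — using that every rh-cover of a qcqs scheme is refined by a finite composition of Zariski covers and blow-ups, which is already ``finitely presented'' data that descends to some $X'$ — or (b) identify $\Hrh^*(-;F)$ with a Zariski-local invariant (for constant $F$ this is plausible precisely because of Step 1) and then reduce to the Zariski case directly. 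Option (b) is the cleanest: once one knows $\Hrh^*(Y;F)=\Hzar^*(Y;F)$ for all noetherian $Y$ appearing, the rh-colimit computation can be \emph{deduced} from the Zariski one rather than proved independently, so the only genuinely new input needed is the compatibility of the rh-site of $\skp{X}_U$ with the levelwise rh-sites, which I would extract from Definition~\ref{ZR-site--defin}.
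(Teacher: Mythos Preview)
Your proposal has a genuine gap at the very first step: the levelwise comparison $\Hzar^*(Y;F)\to\Hrh^*(Y;F)$ is \emph{not} an isomorphism for constant $F$ on an arbitrary noetherian scheme $Y$. Take $Y$ to be the nodal curve $\{y^2=x^3+x^2\}$ over a field. Since $Y$ is irreducible, every nonempty Zariski open is connected, so the constant sheaf $\Z$ is flasque and $\Hzar^1(Y;\Z)=0$. But the abstract blow-up square coming from the normalisation $\tilde Y\cong\A^1\to Y$, with two points lying over the node, gives a Mayer--Vietoris sequence yielding $\Hrh^1(Y;\Z)\cong\Z$. (This is exactly the computation behind $\K_{-1}(N)=\Z$ quoted in the introduction.) So your ``Step~1'' fails already for the special fibres $X'\setminus U$ that arise here, and with it your Option~(b) strategy for the limit argument collapses.

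The paper's proof works precisely because the isomorphism holds only \emph{after} passing to the Zariski--Riemann space, not levelwise. The mechanism is that the proper birational part of any rh-cover of some $X'_Z$ can be \emph{absorbed into the inverse system}: one finds an admissible blow-up $X''\to X'$ such that the pulled-back rh-cover of $X''_Z$ is refined by a closed cover (Proposition~\ref{closed-cdh--prop}). Thus on $\skp{X}_U\setminus U$ every rh-cover is refined by a biZariski cover (Zariski plus closed immersions), and for constant sheaves one shows separately that $\Hzar^*=\Hbizar^*$ (Lemma~\ref{Hzar=Hbizar--lemma}). The colimit identification you wrote down is then a \emph{consequence} of this argument (Corollary~\ref{Hzar=Hrh-colimit_cor}), not an input to it. In short, the content of the theorem is exactly the step you tried to treat as ``standard''.
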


We also show an rh-version of a cdh-result of Kerz-Strunk-Tamme \cite[6.3]{kst}. This is not a new proof but the observation that the analogous proof goes through. The statement will enter in the proof of Theorem A.

\begin{introthm}[Theorem~\ref{Lrh_K=KH--thm}]
Let $X$ be a finite dimensional noetherian scheme. Then the canonical maps of rh-sheaves with values in spectra on $\Sch_X$
	\[
	\Lrh\K_{\geq 0} \To \Lrh\K \To \KH
	\]
are equivalences.
\end{introthm}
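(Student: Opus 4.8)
The statement is the rh-analogue of \cite[6.3]{kst}, and the plan is to transport that proof, reading `Zariski' for `Nisnevich' and `rh' for `cdh' throughout. The first point is that $\Lrh\KH=\KH$: the presheaf $\KH$ is a cdh-sheaf on $\Sch_X$ by \cite{kst}, and since the rh-topology is coarser than the cdh-topology — its covers being generated by Zariski covers and abstract blow-up squares, all of which are cdh-covers — every cdh-sheaf is in particular an rh-sheaf. Hence it suffices to prove that the maps
\[
\Lrh\K_{\geq 0}\To\Lrh\K\To\KH
\]
of rh-sheaves of spectra on $\Sch_X$, induced by $\K_{\geq 0}\to\K\to\KH$, are equivalences.

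Next I would reduce this to a statement about stalks. Every $Y\in\Sch_X$ is noetherian of finite Krull dimension, hence has finite rh-cohomological dimension — bounded by $\dim Y$, by the same argument as for the cdh-topology (Grothendieck's vanishing theorem for the Zariski topology, together with an induction on the dimension using the long exact sequence attached to an abstract blow-up square). Therefore the Postnikov tower of every rh-sheaf of spectra on $\Sch_X$ converges, and so a map of such sheaves is an equivalence if and only if it is one on all stalks. The points of the rh-topos are the spectra of valuation rings — in contrast with the \emph{henselian} valuation rings that furnish the points of the cdh-topos — and those arising over the noetherian scheme $X$ are of finite rank. Since $\K$, $\K_{\geq 0}$ and $\KH$ all commute with filtered colimits of rings, and since rh-sheafification is an equivalence on stalks, the stalks of the three sheaves above at the point attached to a valuation ring $V$ are $\K_{\geq 0}(V)$, $\K(V)$ and $\KH(V)$. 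Thus the theorem reduces to the claim that, for every valuation ring $V$ of finite rank, the maps
\[
\K_{\geq 0}(V)\To\K(V)\To\KH(V)
\]
are equivalences.

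The first map is an equivalence exactly when $\K_{-i}(V)=0$ for all $i>0$, and the second exactly when $V$ is $\K$-regular, so that $\K(V)\simeq\KH(V)$. Both the vanishing of negative $\K$-groups and the $\K$-regularity of finite-rank valuation rings are known, and are part of — or can be extracted from — the circle of results around \cite{kst} on Weibel's conjecture and the $\K$-theory of valuation rings. Feeding this into the reduction of the previous paragraph proves the theorem, and the literal same argument with `Zariski'/`rh' replaced by `Nisnevich'/`cdh' recovers \cite[6.3]{kst}.

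The delicate point in passing from cdh to rh is precisely the one just invoked: the rh-topos has \emph{more} points than the cdh-topos, namely all valuation rings rather than only the henselian ones, so one needs the negative-$\K$-vanishing and $\K$-regularity for arbitrary finite-rank valuation rings and not merely for the henselian ones used in \cite{kst}. (For the comparison $\Lrh\K\to\KH$ alone one can sidestep this: $\K$ is a Nisnevich sheaf, and the Nisnevich topology is compatible with both the Zariski topology and the topology of abstract blow-up squares, so $\Lrh\K$ is already a Nisnevich — hence cdh — sheaf, whence $\Lrh\K=\Lcdh\K=\KH$; but the refinement to connective $\K$-theory genuinely uses the vanishing of $\K_{<0}$ on valuation rings.) Granting these inputs, everything else — the interplay of presheaves, rh-sheaves and their stalks, the reduction to the distinguished squares generating the rh-topology, and Voevodsky's descent criterion for the associated bounded, complete, regular cd-structure — is formal and carries over from \cite{kst} without change.
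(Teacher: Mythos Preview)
Your approach is correct but takes a genuinely different route from the paper's. The paper never identifies the rh-points with valuation rings and never invokes the K-theory of valuation rings. Instead it argues at the level of homotopy sheaves: using hypercompleteness of the rh-topos (Proposition~\ref{rh-hypercomplete--prop}) it suffices to show that the maps induce isomorphisms on rh-sheafified homotopy groups. For the first map this amounts to $\arh\K_{-i}=0$ for $i>0$; for the second, via a spectral sequence $E^1_{p,q}=\arh N^p\K_q \Rightarrow \arh\KH_{p+q}$, it amounts to $\arh N^p\K_q=0$ for $p\geq 1$. Both vanishings follow from an abstract criterion (Lemma~\ref{Lrh=0--lemma}): a presheaf $F$ has $\arh F=0$ provided $F$ vanishes on reduced zero-dimensional schemes and every class over a reduced affine scheme can be killed by a proper birational map, the latter coming from platification par \'eclatement via \cite[Prop.~5]{ks16} and \cite[6.4]{kst}. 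Your route --- reducing to $\K$-regularity and negative-$\K$-vanishing for valuation rings --- is essentially the Kelly--Morrow argument the paper cites as an alternative \cite[3.4]{kelly-morrow}; it is conceptually cleaner, but it imports facts about non-noetherian valuation rings that are not in \cite{kst} proper, so your attribution ``can be extracted from the circle of results around \cite{kst}'' is imprecise. The paper's approach, by contrast, stays entirely within the category of noetherian schemes, at the cost of the auxiliary spectral-sequence step.
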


\vquad\noindent\textbf{Proofsketch for the main result.} \label{proofsketch}
 We shall briefly sketch the proof of Theorem A(iii) in the affinoid case (Theorem~\ref{main-thm}). For every reduced affinoid algebra $A$ and every model $X'\to\Spec(A^\circ)$ over the subring $A^\circ$ of power-bounded elements with pseudo-uniformiser $\pi$ there exists a fibre sequence \cite[5.8]{kst18}
	\[ \tag{$\star$}
	\K(X'\on\pi) \To \Kcont(X') \To \Kcont(A).
	\]
\noindent Now let us for a moment assume that $A$ is regular and that resolution of singularities is available so that we could choose a regular model $X'$ whose special fibre $X'/\pi$ is simple normal crossing. In this case, $\K(X'\on\pi)$ vanishes in negative degrees and hence we have 
\[
\Kcont_{-d}(A) \underset{(1)}{\cong} \Kcont_{-d}(X') \underset{(2)}{\cong} \K_{-d}(X'/\pi) \underset{(3)}{\cong} \Hcdh(X'/\pi;\Z)
\] 	
where (1) follows from $\Kcont_{-i}(X'\on\pi)=0$ for $i>0$, (2) from nil-invariance of K-theory in degrees $\geq d$ (Lemma~\ref{K_-d_is_nil-invariant--lemma}), and (3) is (iii) above \resp{} \cite[Cor. D]{kst}. 
Let $(D_i)_{i\in I}$ be the irreducible components of $X'/\pi$. As $X'/\pi$ is simple normal crossing, all intersections of the irreducible components are regular, hence their cdh-cohomology equals their Zariski cohomology which is $\Z$ concentrated in degree zero; hence $\Hcdh^d(X'/\pi;\Z)$ can be computed by the \Cech{} nerve of the cdh-cover $D:=\bigsqcup_{i\in I} D_i \to X/\pi$. On the other hand, the Berkovich space $\Spb(A)$ associated with $A$ is homotopy equivalent to its skeleton which is homeomorphic to the intersection complex $\Delta(D)$ \cite[2.4.6, 2.4.9]{nicaise}. Putting these together yiels $\Kcont_{-d}(A) \cong \Ho^d(\Spb(A);\Z) \cong \Ho^d(\Spa(A,A^\circ);\Z)$.
	
\vquad\noindent In the general case where $X'$ is an aribtrary model, we have do proceed differently. For $n<0$ and $\alpha\in\K_n(X'\on\pi)$ there exists by Raynaud-Gruson's \emph{platification par \'eclatement} an admissible blow-up $X''\to X'$ such that the pullback of $\alpha$ vanishes in $\K_n(X''\on\pi)$ \cite[7]{kerz-icm}. In the colimit over all models this yields that $\Kcont_n(A) \cong \Kcont_n(\skp{A_0}_A)$. For $d=\dim(A)$ we have $\Kcont_d(\skp{A_0}_A) \cong \K_d(\skp{A_0}_A/\pi)$ and the latter is isomorphic to $\Hrh^d(\skp{A_0}_A/\pi;\Z)$ via a descent spectral sequence argument (Theorem~\ref{K_-d=Hrh^d--thm}). Using Theorem C (Theorem~\ref{Hzar=Hrh--thm}) we can pass to Zariski cohomology. Now the result follows from identifying $\skp{A^\circ}_A$ with the adic spectrum $\Spa(A,A^\circ)$ (Theorem~\ref{adic_space_is_ZR--thm}).

\vquad\noindent\textbf{Leitfaden.}
In section~\ref{sec-continuous-k-theory} we recall the definition of and some basic facts about continuous K-theory. Then we introduce admissible Zariski-Riemann spaces in section~\ref{sec-ZR-spaces} and we establish a comparison between their rh-cohomology and their Zariski cohomology (Theorem~\ref{Hzar=Hrh--thm}) in section~\ref{sec-cohomology-ZR}. Subsequently we recall the connection between formal Zariski-Riemann spaces and adic spaces in section \ref{sec-formal-zr-adic}; this causes the adic spaces showing up in the main result. In section~\ref{sec-proof-main-result} we prove the main result in the affine case (Theorem~\ref{main-thm}). In section~\ref{sec-global-continuous-k-theory} we present, following Morrow, a global version of continuous K-theory and in section~\ref{sec-global-main-result} we prove the main result in the global case (Theorem~\ref{global-main-thm}). Finally, there is an Appendix~\ref{sec-rh-topology} about the rh-topology and rh-versions of results for the cdh-topology.

\vquad\noindent\textbf{Notation.}
Discrete categories are denoted by $\mathrm{upright}$ $\mathrm{letters}$ whereas genuine \infcats{} are denoted by $\mathbf{bold}$ $\mathbf{letters}$. We denote by $\Spc$ the \infcat{} of spaces \cite[1.2.16.1]{htt} and by $\Sp$ the \infcat{} of spectra \cite[1.4.3.1]{ha}.
Given a scheme $X$ we denote by $\Sch_X$ the category of separated schemes of finite type over $X$. If $X$ is noetherian, then every scheme in $\Sch_X$ is noetherian as well.

\vquad\noindent\textbf{Acknowledgements.} 
This article is a condensed version of my PhD thesis. I am thankful to my advisors Moritz Kerz and Georg Tamme for their advise and their constant support. I thank Johann Haas for many helpful discussions on the subject and Florent Martin for explaining to me Berkovich spaces and their skeleta. Concerning the expositon, I thank my thesis' referees Moritz Kerz and Matthew Morrow for their feedback as well as Georg Tamme for helpful comments on a draft of the present paper. 
The author was supported by the RTG/GRK 1692 ``Curvature, Cycles, and Cohomology'' and the CRC/SFB 1085 ``Higher Invariants'' (both Universität Regensburg) funded by the DFG and the Swiss National Science Foundation.
Further thanks go to the Hausdorff Research Institute for Mathematics in Bonn for generous hospitality during the Hausdorff Trimester Program on ``K-theory and related fields'' during the months of May and June in 2017.

\vquad\noindent\textbf{Data availability statement.} 
Data sharing not applicable to this article as no datasets were generated or analysed during the current study.

\vquad\noindent\textbf{Conflict of interest statement.} 
The author states that there is no conflict of interest.

	\section{Continuous K-theory for Tate rings}
	\label{sec-continuous-k-theory}

In this section we recall the definition of continuous K-theory as defined by Morrow~\cite{morrow} and further studied by Kerz-Saito-Tamme~\cite{kst18}.

\begin{defin} \label{k-theory--def}
Let $X$ be a scheme. We denote by $\K(X)$ the nonconnective K-theory spectrum $\K(\Perf(X))$ \`a la Blumberg-Gepner-Tabuada associated with the \infcat{} $\Perf(X)$ of perfect complexes on $X$ \cite[\S 7.1, \S 9.1]{bgt13}. For a ring $A$, we write $\K(A)$ denoting $\K(\Spec(A))$. For $i\in\Z$ we denote by $\K_i(X)$ and $\K_i(A)$ the $i$-th homotopy group of $\K(X)$ and $\K(A)$, respectively.
\end{defin}

\begin{remark}
For a scheme $X$ the homotopy category $\mathrm{Ho}(\Perf(X))$ is equivalent to the derived category of perfect complexes $\mathrm{Perf}(X)$ and the K-theory spectrum $\K(X)$ is equivalent to the one constructed by Thomason-Trobaugh \cite[\S 3]{tt90}. Every relevant scheme in this article is quasi-projective over an affine scheme, hence admits an ample family of line bundles. Thus K-theory can be computed in terms of the category $\Vect(X)$ of vector bundles (\ie{} locally free $\O_X$-modules). In view of Bass' Fundamental Theorem, for $n\geq 1$ the group $\K_{-n}(X)$ is a quotient of $\K_0(X\times\mathbf{G}_m^n)$ wherein elements coming from $\K_0(X\times\mathbf{A}^n)$ vanish.
\end{remark}

In order to define continous K-theory for adic rings, we give some reminders about adic rings and pro-objects.

\begin{reminder}
Let $A_0$ be be a ring and let $I$ be an ideal of $A_0$. Then the ideals $(I^n)_{n\geq 0}$ form a basis of neighbourhoods of zero in the so-called $I$-\df{adic topology}. An \df{adic ring} is a topological ring $A_0$ such that its topology coincides with the $I$-adic topology for some ideal $I$ of $A_0$. We say that $I$ is an \df{ideal of definition}. Note that adic rings have usually more than one ideal of definition. If the ideal $I$ is finitely generated, the completion $\hat{A}_0$ is naturally isomorphic to the limit $\lim_{n\geq 1}A_0/I^n$. 
\end{reminder}

\begin{reminder} \label{pro-objects--reminder}
We briefly recall the notion of pro-objects and, in particular, of pro-spectra. For proofs or references we refer to Kerz-Saito-Tamme \cite[\S 2]{kst18}.

Given an \infcat{} $\Ccal$ which is assumed to be accessible and to admit finite limits, one can built its \df{pro-category}
	\[
	\Pro(\Ccal) = \Fun^{\mathrm{lex,acc}}(\Ccal,\Spc)\op
	\] 
where $\Fun^{\mathrm{lex,acc}}(\Ccal,\Spc)$ is the full subcategory of $\Fun(\Ccal,\Spc)$ consisting of functors which are accessible (\ie{} preserve $\kappa$-small colimits for some regular cardinal number $\kappa$)  and left-exact (\ie{} commute with finite limits). The category $\Pro(\Ccal)$ has finite limits and, if $\Ccal$ has, also finite colimits which both can be computed level-wise. If $\Ccal$ is stable, then also $\Pro(\Ccal)$ is. 

As a matter of fact, a pro-object in $\Ccal$ can be represented by a functor $X\colon I\to\Ccal$ where $I$ is a small cofiltered \infcat{}. In this case, we write $\prolim_{i\in I}X_i$ for the corresponding object in $\Pro(\Ccal)$. In our situations, the index category $I$ will always be the poset of natural numbers $\N$.

Our main example of interest is the category $\Pro(\Sp)$ of \df{pro-spectra} whereas we are interested in another notion of equivalence. For this purpose, let $\iota \colon \Sp^+ \inj \Sp$ be the inclusion of the full stable subcategory spanned by bounded above spectra (\ie{} whose higher homotopy groups eventually vanish). The induced inclusion $\Pro(\iota) \colon \Pro(\Sp^+) \inj \Pro(\Sp)$ is right-adjoint to the restriction functor $\iota^* \colon \Pro(\Sp) \to \Pro(\Sp^+)$. 

A map $ X \to Y$ of pro-spectra is said to be a \df{weak equivalence} iff the induced map $\iota^*X\to\iota^*Y$ is an equivalence in $\Pro(\Sp^+)$. This nomenclature is justified by the fact that the map $X\to Y$ is a weak equivalence if and only if some truncation is an equivalence and the induced map on pro-homotopy groups are pro-isomorphisms. Similarly, one defines the notions of \df{weak fibre sequence} and \df{weak pullback}.
\end{reminder}

\begin{defin}
Let $A_0$ be a complete $I$-adic ring for some ideal $I$ of $A_0$. The \df{continuous K-theory} of $A_0$ is defined as the pro-spectrum
	\[
	\Kcont(A_0) = \prolim_{n\geq 1} \K(A_0/I^n)
	\]
where $\K$ is nonconnective algebraic K-theory (Definition~\ref{k-theory--def}). This is independent of the choice of the ideal of definition.
\end{defin}

\begin{defin} \label{tate-ring--defin}
A topological ring $A$ is called a \df{Tate ring} if there exists an open subring $\Anod \subset A$ which is a complete $\pi$-adic ring (\ie{} it is complete with respect to the $(\pi)$-adic topology) for some $\pi\in\Anod$ such that $A = \Anod[\pi\inv]$. We call such a subring $\Anod$ a \df{ring of definition} of A and such an element $\pi$ a \df{pseudo-uniformiser}. A \df{Tate pair} $(A,A_0)$ is a Tate ring together with the choice of a ring of definition and a \df{Tate triple} $(A,\Anod,\pi)$ is a Tate pair together with the choice of a pseudo-uniformiser.\footnote{One should not confuse our notion of a Tate pair with the notion of an \emph{affinoid Tate ring} $(A,A^+)$, \ie{} a Tate ring $A$ together with an open subring $A^+$ of the power-bounded elements of $A$ which is integrally closed in $A$. The latter one is used in the context of adic spaces.}
\end{defin}

\begin{example}
Every affinoid algebra $A$ over a complete \na{} field $k$ is a Tate ring. One can take $\Anod$ to be those elements $x$ which have residue norm $|x|_\alpha\leq 1$ with respect to some presentation $k\skp{t_1,\ldots,t_n} \surj[\alpha] A$ and any $\pi\in k$ with $|\pi|<1$ is a pseudo-uniformiser.
\end{example}

\begin{defin} \label{defin-continuous_k-theory}
Let $(A,\Anod,\pi) $ be a Tate triple. We define the \df{continuous K-theory} $\Kcont(A)$ of $A$ as the pushout
	\carre{\K(\Anod)}{\K(A)}{\Kcont(\Anod)}{\Kcont(A)}
in the $\infty$-category $\Pro(\Sp)$ of pro-spectra.
\end{defin}

\begin{remark} \label{cont-k-theory--rem}
In the situation of Definition~\ref{defin-continuous_k-theory} we obtain a fibre sequence
	\[
	\K(\Anod\on\pi) \To \Kcont(\Anod) \To \Kcont(A)
	\]
of pro-spectra. If $A = A^\prime_0[\lambda\inv]$ for another complete $\lambda$-adic ring $A^\prime_0$, one obtains a weakly equivalent pro-spectrum, \ie{} there is a zig-zag of maps inducing pro-isomorphisms on pro-homotopy groups \cite[Prop.~5.4]{kst18}. 
\end{remark}

For regular rings algebraic K-theory vanishes in negative degrees. For continuous K-theory this may not be the case since it sees the reduction type of a ring of definition.

\begin{example}
Let $(A,A_0)$ be a Tate pair. By definition there is an exact sequence 
	\[
	\ldots\to \K_{-1}(A_0) \to \Kcont_{-1}(A_0) \oplus \K_{-1}(A) \to \Kcont_{-1}(A) \to \K_{-2}(A_0) \to \ldots.
	\]
If both $A$ and $A_0$ are regular, it follows that $\Kcont_{-1}(A) \cong \K_{-1}(A_0)$. If $A_0$ is a $\pi$-adic ring, then $\Kcont_{-1}(A_0) = \K_{-1}(A_0/\pi)$ due to nilinvariance of negative algebraic K-theory (which follows from nilinvariance of $\K_0$ \cite[II. Lem.~2.2]{weibel} and the definition of negative K-theory in terms of $\K_0$ \cite[III. Def.~4.1]{weibel}). Now let $k$ be a discretely valued field and let $\pi\in k^\circ$ be a uniformiser.
\begin{enumerate}
	\item If $A \cong k\skp{x,y}/(x^3-y^2+\pi)$, we can choose $A_0 := k^\circ\skp{x,y}/(x^3-y^2+\pi)$ so that both $A$ and $A_0$ are regular. The reduction $A_0/\pi\cong \tilde{k}\skp{x,y}/(x^3-y^2)$ is the ``cusp'' over $\tilde{k}$. Thus $\Kcont_{-1}(A) = \K(A_0/\pi) = 0$ \cite[2.4]{weibel-normal}.
	\item If $A \cong k\skp{x,y}/(x^3+x^2-y^2+\pi)$, we can choose $A_0 := k^\circ\skp{x,y}/(x^3+x^2-y^2+\pi)$ so that both $A$ and $A_0$ are regular. The reduction $A_0/\pi \cong \tilde{k}\skp{x,y}/(x^3+x^2-y^2)$ is the ``node'' over $\tilde{k}$. If $\mathrm{char}(k)\neq 2$, then $\Kcont_{-1}(A) = \K(A_0/\pi) = \Z$ does not vanish \cite[2.4]{weibel-normal}.
\end{enumerate} 
\end{example}

For the reader's intuition we state some properties of continuous K-theory.

\begin{prop}[Kerz-Saito-Tamme]
Let $(A,A_0,\pi)$ be a Tate triple.
\begin{enumerate}
	\item The canonical map $\K_0(A) \to \Kcont_0(A)$ is an isomorphism.
	\item  $\Kcont_1(A) \cong \prolim\limits_n \K_1(A)/(1+\pi^nA_0)$.
	\item Continuous K-theory satifies an analytic version of Bass Fundamental Theorem; more precisely, for $i\in\Z$ there is an exact sequence
	\[
	0\to \Kcont_i(A) \to \Kcont_i(A\skp{t})\oplus\Kcont_i(A\skp{t\inv}) \to \Kcont_i(A\skp{t,t\inv}) \to \Kcont_{i-1}(A) \to 0.
	\]
	\item Continuous K-theory coincides in negative degrees with the groups defined by Karoubi-Villamayor \cite[7.7]{kv71}\footnote{Unfortunately, Karoubi-Villamayor call these groups ``positive''.} and Calvo \cite[3.2]{calvo}.
\end{enumerate}
\end{prop}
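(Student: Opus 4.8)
All four statements are due to Kerz--Saito--Tamme \cite{kst18}, so the plan is not to reprove them but to recall how each follows from the two exact structures already introduced: the Mayer--Vietoris sequence of the defining pushout square (Definition~\ref{defin-continuous_k-theory}) and the fibre sequence $\K(A_0\on\pi)\to\Kcont(A_0)\to\Kcont(A)$ of Remark~\ref{cont-k-theory--rem}, together with standard facts about $\K_0$ and $\K_1$ of complete and of henselian pairs. The preparatory step is to record that $\Kcont_0(A_0)\cong\K_0(A_0/\pi)$ (nilinvariance of $\K_0$, applied levelwise to the pro-system $\{A_0/\pi^m\}_m$), that $\K_0(A_0)\to\K_0(A_0/\pi)$ is an isomorphism because $(A_0,(\pi))$ is a henselian pair, and that $\K_1(A_0)\to\Kcont_1(A_0)=\prolim_m\K_1(A_0/\pi^m)$ is surjective, the transition maps $\K_1(A_0/\pi^{m+1})\to\K_1(A_0/\pi^m)$ being surjective and $A_0$ being $\pi$-adically complete.

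For~(i) I would feed these identifications into the long exact sequence of the pushout square: since $\K_0(A_0)\to\Kcont_0(A_0)$ is an isomorphism and $\K_1(A_0)\to\Kcont_1(A_0)$ is surjective, a short diagram chase forces $\K_0(A)\to\Kcont_0(A)$ to be an isomorphism. (The analogous statement fails in negative degrees, where $\Kcont$ genuinely detects the reduction type, as in the cusp/node example above.) For~(ii), the same sequence in degree~$1$, together with $\Kcont_1(A_0)=\prolim_m\K_1(A_0/\pi^m)$, identifies $\Kcont_1(A)$ as a pro-abelian group with $\{\K_1(A)/(1+\pi^mA_0)\}_m$; here one works levelwise, computing $\coker\bigl(\K_1(A_0)\to\K_1(A)\oplus\K_1(A_0/\pi^m)\bigr)$ and using that the kernel of $\K_1(A_0)\to\K_1(A_0/\pi^m)$ is the image of $1+\pi^mA_0\subset A_0^\times$.

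For~(iii)---the substantive point---the plan is to manufacture the analytic fundamental theorem by a levelwise application of the classical one. Since $A_0$ is $\pi$-adically complete one has $A_0\skp{t}/\pi^m=(A_0/\pi^m)[t]$, $A_0\skp{t\inv}/\pi^m=(A_0/\pi^m)[t\inv]$ and $A_0\skp{t,t\inv}/\pi^m=(A_0/\pi^m)[t,t\inv]$, and $A_0\skp{t}$, $A_0\skp{t\inv}$, $A_0\skp{t,t\inv}$ are rings of definition for the Tate rings $A\skp{t}$, $A\skp{t\inv}$, $A\skp{t,t\inv}$ with the same pseudo-uniformiser $\pi$. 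Bass' fundamental theorem in its nonconnective form \cite{weibel} gives, for each ring $A_0/\pi^m$, a fibre sequence $\K(A_0/\pi^m)\to\K((A_0/\pi^m)[t])\oplus\K((A_0/\pi^m)[t\inv])\to\K((A_0/\pi^m)[t,t\inv])$ whose connecting map is split by cup product with the class of the unit $t$; as $\prolim_m$ preserves fibre sequences and the splittings are compatible in $m$, this assembles to a fibre sequence $\Kcont(A_0)\to\Kcont(A_0\skp{t})\oplus\Kcont(A_0\skp{t\inv})\to\Kcont(A_0\skp{t,t\inv})$ of pro-spectra with split connecting map. I would then combine this with the four fibre sequences of Remark~\ref{cont-k-theory--rem} attached to $A$, $A\skp{t}$, $A\skp{t\inv}$ and $A\skp{t,t\inv}$: together they present the three-term complex $\Kcont(A)\to\Kcont(A\skp{t})\oplus\Kcont(A\skp{t\inv})\to\Kcont(A\skp{t,t\inv})$ as the cofibre of the analogous complex for $\K(A_0\on\pi),\K(A_0\skp{t}\on\pi),\ldots$ mapping into the analogous complex for $\Kcont(A_0),\Kcont(A_0\skp{t}),\ldots$. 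Since cofibres of maps of fibre sequences are fibre sequences, the first complex is again a fibre sequence, split by cup product with $t\in\Kcont_1(A\skp{t,t\inv})$, and passing to pro-homotopy groups turns it into the asserted four-term exact sequences. I expect the delicate points---that passing to $\prolim_m$ is harmless, that the splittings are compatible, and that the complex built from $\K(A_0\on\pi),\K(A_0\skp{t}\on\pi),\ldots$ is itself a fibre sequence---to be the main obstacle, and they are precisely what is carried out in \cite{kst18}; by contrast (i), (ii) and (iv) reduce to bookkeeping with $\K_0$ and $\K_1$ or to a citation.

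Finally, (iv) is the comparison theorem of Kerz--Saito--Tamme: in non-positive degrees $\Kcont_n(A)$ is identified with the Karoubi--Villamayor groups of \cite{kv71} and with the groups of Calvo \cite{calvo} by exhibiting compatible comparison maps and verifying that they are isomorphisms, and for this I would simply invoke \cite{kst18}.
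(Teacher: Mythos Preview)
The paper gives no proof of this proposition at all: it is stated purely for the reader's intuition, attributed to Kerz--Saito--Tamme, and followed immediately by the next statement. So there is nothing to compare your argument against; your proposal is already far more detailed than what the paper does, which is simply to cite \cite{kst18}.

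That said, your sketch is reasonable and broadly in line with how these results are actually established in \cite{kst18}. One small correction: in your argument for~(iii) you write that the complex for $\Kcont(A)$ is obtained as the \emph{cofibre} of the map from the $\K(-\on\pi)$ complex to the $\Kcont(A_0)$ complex, and then invoke that ``cofibres of maps of fibre sequences are fibre sequences'' to conclude that the $\K(-\on\pi)$ complex is a fibre sequence. This is circular as written: you need the $\K(-\on\pi)$ complex to be a fibre sequence \emph{first} (which it is, by the classical Bass fundamental theorem for the schemes $\Spec(A_0)$, $\Spec(A_0\skp{t})$, etc., with support on $V(\pi)$), and then the $\Kcont(A)$ complex is a fibre sequence as a cofibre of a map between two fibre sequences. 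Apart from this slip in the order of the logic, the outline is sound.
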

\begin{proof}
The statements (i), (iii), and (iv) are \cite[5.10]{kst18} and (ii) is \cite[5.5]{kst18}.
\end{proof}

There are not always rings of definition which behave nice enough so that we will have to deal with other models which may not be affine. Hence we define similarly to Definition~\ref{defin-continuous_k-theory} the following.

\begin{defin}
Let $X$ be a scheme over a $\pi$-adic ring $A_0$. Its \df{continuous K-theory} is
	\[
	\Kcont(X) := \prolim_{n\geq 1} K(X/\pi^n)
	\]
where $X/\pi^n := X \times_{\Spec(A_0)} \Spec(A_0/\pi^n)$.
\end{defin}

\begin{prop}[{Kerz-Saito-Tamme \cite[5.8]{kst18}}] \label{Kcont-model_fibre_seq--prop}
Let $(A,A_0,\pi)$ be a Tate triple such that $A_0$ is noetherian and let $X\to \Spec(A_0)$ be an admissible blow-up, \ie{} a proper morphism which is an isomorphism over $\Spec(A)$. Then there exists a weak fibre sequence
	\[
	\K(X \on\pi) \To \Kcont(X) \To \Kcont(A)
	\]
of pro-spectra.
\end{prop}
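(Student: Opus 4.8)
Following Kerz--Saito--Tamme \cite{kst18}, the plan is to combine the Thomason--Trobaugh localization sequence with the pro-cdh descent theorem of Kerz--Strunk--Tamme \cite{kst}.

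\textbf{Step 1 (localization).} Since $A=A_0[\pi\inv]$ we have $\Spec(A)=\Spec(A_0)\setminus V(\pi)$, and as $f\colon X\to\Spec(A_0)$ is an isomorphism over $\Spec(A)$ the open complement of $V(\pi)$ in $X$ is canonically $\Spec(A)$. Thomason--Trobaugh localization \cite[\S 3]{tt90} for the closed immersions $V(\pi)\hookrightarrow X$ and $V(\pi)\hookrightarrow\Spec(A_0)$ gives a map of fibre sequences of spectra
\[ \bigl(\K(A_0\on\pi)\To\K(A_0)\To\K(A)\bigr)\ \longrightarrow\ \bigl(\K(X\on\pi)\To\K(X)\To\K(A)\bigr), \]
induced by $f^*$ and inducing the identity on the common cofibre $\K(A)$ (because $f$ restricts to the identity of $\Spec(A)$). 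In particular the left-hand square, with rows $\K(A_0\on\pi)\to\K(A_0)$ and $\K(X\on\pi)\to\K(X)$, is cocartesian: its total cofibre is the cofibre of the induced map on row-cofibres, i.e.\ $\cofib(\id_{\K(A)})=0$.

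\textbf{Step 2 (pro-cdh descent).} The datum $\bigl(V(\pi)\hookrightarrow\Spec(A_0),\ f\colon X\to\Spec(A_0)\bigr)$ is an abstract blow-up square of noetherian schemes ($A_0$ is noetherian, hence so is $X$ and each $\pi$-power reduction). The infinitesimal thickenings of $V(\pi)$ in $\Spec(A_0)$ are the $\Spec(A_0/\pi^n)$, and those of $X/\pi=X\times_{\Spec(A_0)}V(\pi)$ in $X$ are the $X/\pi^n$ (as $\pi^n\O_X=(\pi\O_X)^n$); these pro-systems compute $\Kcont(A_0)$ and $\Kcont(X)$. Hence pro-cdh descent \cite{kst} for this square shows that
	\carre{\K(A_0)}{\Kcont(A_0)}{\K(X)}{\Kcont(X)}
is weakly cocartesian (equivalently weakly cartesian) in $\Pro(\Sp)$; in other words $\Kcont(X)$ is weakly equivalent to $\Kcont(A_0)\sqcup_{\K(A_0)}\K(X)$.

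\textbf{Step 3 (conclusion).} Pasting the cocartesian square of Step 1 onto the weakly cocartesian square of Step 2 yields that the rectangle with rows $\K(A_0\on\pi)\to\Kcont(A_0)$ and $\K(X\on\pi)\to\Kcont(X)$ is weakly cocartesian, so the cofibres of its two rows are weakly equivalent. By Remark~\ref{cont-k-theory--rem} the top one is $\cofib\bigl(\K(A_0\on\pi)\to\Kcont(A_0)\bigr)\simeq\Kcont(A)$, whence $\cofib\bigl(\K(X\on\pi)\to\Kcont(X)\bigr)\simeq\Kcont(A)$; that is, we obtain the desired weak fibre sequence $\K(X\on\pi)\to\Kcont(X)\to\Kcont(A)$. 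One finally checks that the map $\Kcont(X)\to\Kcont(A)$ so produced is the canonical one, which follows from compatibility of the formation of $\Kcont$ with pull-back along $f$ and the defining pushout square for $\Kcont(A)$ (Definition~\ref{defin-continuous_k-theory}).

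The only substantive input is the pro-cdh descent theorem of \cite{kst}; granted it, the remainder is a formal manipulation of (co)cartesian squares in the stable $\infty$-category $\Pro(\Sp)$. The delicate points are the identification of the reduction pro-systems $\{A_0/\pi^n\}$ and $\{X/\pi^n\}$ with systems of infinitesimal thickenings --- where noetherianity of $A_0$ enters --- and the verification that the resulting map $\Kcont(X)\to\Kcont(A)$ coincides with the canonical one.
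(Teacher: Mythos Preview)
The paper does not give its own proof of this proposition; it simply cites \cite[5.8]{kst18}. Your argument is correct and is essentially the proof given by Kerz--Saito--Tamme: localization for the closed/open decomposition $V(\pi)\hookrightarrow X\hookleftarrow\Spec(A)$ produces a cocartesian square comparing $\K(A_0\on\pi)\to\K(A_0)$ with $\K(X\on\pi)\to\K(X)$, pro-cdh descent \cite{kst} for the abstract blow-up square $(X\to\Spec(A_0),\,V(\pi)\hookrightarrow\Spec(A_0))$ gives the weakly cocartesian square relating $\K(A_0)\to\Kcont(A_0)$ and $\K(X)\to\Kcont(X)$, and pasting identifies the cofibre of $\K(X\on\pi)\to\Kcont(X)$ with that of $\K(A_0\on\pi)\to\Kcont(A_0)$, which is $\Kcont(A)$ by Remark~\ref{cont-k-theory--rem}.
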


For a more detailed account of continuous K-theory we refer the reader to \cite[\S 6]{kst18}.

	\section{Admissible Zariski-Riemann spaces}
	\label{sec-ZR-spaces}

Using a regular model $X'$ of a regular affinoid algebra $A$ makes the fibre sequence (Proposition~\ref{Kcont-model_fibre_seq--prop})
	\[
	\K(X' \on\pi) \To \Kcont(X') \To \Kcont(A)
	\]
much easier as the left-hand term vanishes in negative degrees, \cf{} the proofsketch for the main result (p.~\pageref{proofsketch}). Unfortunately, resolution of singularities is not available at the moment in positive characteristic. 
A good workaround for this inconvenience is to work with a Zariski-Riemann type space which is defined as the inverse limit of all models, taken in the category of locally ringed spaces. This is not a scheme anymore, but behaves in the world of K-theory almost as good as a regular model does. For instance $\Kcont_n(A) \cong \Kcont_n(\skp{A_0}_A)$ for negative $n$ where $\skp{A_0}_A$ is the admissible Zariski-Riemann space associated with $A$ (Definition~\ref{admissible-blow-up--defin}).

The key part of this article is a comparison of rh-cohomology and Zariski cohomology for admissible Zariski-Riemann spaces (Theorem~\ref{Hzar=Hrh--thm}). Furthermore, we will see later that Zariski-Riemann spaces for formal schemes are closely related to adic spaces (Theorem~\ref{adic_space_is_ZR--thm}).

\begin{notation*}
In this section let $X$ be a \emph{reduced} quasi-compact and quasi-separated scheme and let $U$ be a quasi-compact open subscheme of $X$.
\end{notation*}

\begin{defin} \label{U-admissible_ZR-space--defin}
A \df{U-modification} of $X$ is a projective morphism $X'\to X$ of schemes which is an isomorphism over $U$. Denote by $\Mdf(X,U)$ the category of $U$-modifications of $X$ with morphisms over $X$. We define the \df{U-admissible Zariski-Riemann space} of $X$ to be the limit
	\[
	\skp{X}_U = \lim_{X'\in\Mdf(X,U)} X'
	\]
in the category of locally ringed spaces; it exists due to \cite[ch.~0, 4.1.10]{fuji-kato}.
\end{defin}

\begin{lemma} \label{ZR-coherent_sober--lem}
The underlying topological space of $\skp{X}_U$ is coherent and sober and for any $X'\in\Mdf(X,U)$ the projection $\skp{X}_U\to X'$ is quasi-compact.
\end{lemma}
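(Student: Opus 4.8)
The plan is to deduce everything from Proposition~\ref{limit-top-spaces--prop}(ii) by way of Proposition~\ref{limit_spaces--prop}(i). By the latter, the underlying topological space of $\skp{X}_U$ is the limit $\lim_{X'\in\Mdf(X,U)} X'$ formed in $\Top$, so it is enough to verify the hypotheses of Proposition~\ref{limit-top-spaces--prop}(ii) for this system: that $\Mdf(X,U)$ is cofiltered, that each $X'$ is coherent and sober as a topological space, and that all transition morphisms are quasi-compact.

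For cofilteredness, given two $U$-modifications $X_1'\to X$ and $X_2'\to X$, I would take the fibre product $X_1'\times_X X_2'$, which is an isomorphism over $U$ by base change and is projective over $X$ because $X_1'\times_X X_2'\to X_2'$ is the base change of the projective morphism $X_1'\to X$ and a composition of projective morphisms is projective once the intermediate scheme---here $X_2'$---is quasi-compact; thus $X_1'\times_X X_2'$ is again a $U$-modification dominating both. Two parallel morphisms $f,g\colon X_1'\to X_2'$ over $X$ are equalised by the scheme-theoretic equaliser $\mathrm{Eq}(f,g)$, a closed subscheme of $X_1'$ since $X_2'$ is separated over $X$; it is still a $U$-modification because $f$ and $g$ restrict to the identity over $U$ (as $U\to X$ is a monomorphism), and the inclusion $\mathrm{Eq}(f,g)\hookrightarrow X_1'$ equalises $f$ and $g$. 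Essential smallness of $\Mdf(X,U)$, needed for the limit to exist in the first place, is already built into Definition~\ref{U-admissible_ZR-space--defin}.

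Each $X'\in\Mdf(X,U)$ is a quasi-compact and quasi-separated scheme, because $X'\to X$ is projective---hence proper, hence quasi-compact and separated---and $X$ is quasi-compact and quasi-separated; by the example following Definition~\ref{sober-coherent--def}, the underlying topological space of $X'$ is therefore coherent and sober. Likewise, any morphism $X_1'\to X_2'$ in $\Mdf(X,U)$ is a morphism over $X$ with $X_1'\to X$ proper and $X_2'\to X$ separated, hence is itself proper by the cancellation property for properness, in particular quasi-compact.

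Given these three inputs, Proposition~\ref{limit-top-spaces--prop}(ii) applies verbatim and shows that the limit is coherent and sober and that each projection $\skp{X}_U\to X'$ is quasi-compact, which is the claim. I do not anticipate a genuine difficulty here; the only point requiring a little care is the cofilteredness of $\Mdf(X,U)$---concretely, that the fibre product of two $U$-modifications stays \emph{projective} over $X$---which is exactly where quasi-compactness of the intermediate scheme is invoked.
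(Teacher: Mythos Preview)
Your proof is correct and follows the same route as the paper, which simply records the lemma as ``a special case of Proposition~\ref{limit_spaces--prop}'' without spelling out the hypotheses. You have made explicit precisely what that one-line reference leaves implicit---cofilteredness of $\Mdf(X,U)$, coherence and soberness of each $X'$, and quasi-compactness of the transition maps---so there is nothing to add.
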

\begin{proof}
This is a special case of \cite[ch.~0, 2.2.10]{fuji-kato}.
\end{proof}

The notion of a $U$-admissible modification is quite general. However, one can restrict to a more concrete notion, namely $U$-admissible blow-ups. 

\begin{defin}
A \df{U-admissible blow-up} is a blow-up $\Bl_Z(X) \to X$ whose centre $Z$ is finitely presented and contained in $X\setminus U$. Denote by $\BL(X,U)$ the category of $U$-admissible blow-ups with morphisms over $X$.
\end{defin}

\begin{prop}
The inclusion $\BL(X,U)\inj\Mdf(X,U)$ is cofinal. In particular, the canonical morphism
	\[
	\skp{X}_U = \lim_{X'\in \Mdf(X,U)} X' \To \lim_{X'\in \BL(X,U)} X'.
	\]
is an isomorphism of locally ringed spaces.
\end{prop}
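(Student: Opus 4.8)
The plan is to verify the standard criterion under which the inclusion $\iota\colon\BL(X,U)\inj\Mdf(X,U)$ is cofinal, so that the limit over $\BL(X,U)$ of the diagram $B\mapsto B$ agrees with the one over $\Mdf(X,U)$: namely, for every $U$-modification $X'$ the comma category $\iota/X'$ — whose objects are pairs $(B,h)$ with $B\in\BL(X,U)$ and $h\colon B\to X'$ a morphism over $X$, and whose morphisms are the morphisms over $X$ compatible with the $h$'s — should be nonempty and connected. Granting this, $\iota$ is cofinal and the asserted isomorphism follows formally: $\skp{X}_U=\lim_{X'\in\Mdf(X,U)}X'$ exists in locally ringed spaces by Proposition~\ref{limit_spaces--prop}, a cone over the restricted diagram $B\mapsto B$ is the same datum as a cone over $X'\mapsto X'$, and hence $\lim_{B\in\BL(X,U)}B$ exists and the canonical restriction map $\skp{X}_U\to\lim_{B\in\BL(X,U)}B$ is an isomorphism of locally ringed spaces.

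The nonemptiness of $\iota/X'$ is precisely the statement that \emph{every $U$-modification of $X$ is dominated by a $U$-admissible blow-up}: for a projective morphism $X'\to X$ that is an isomorphism over $U$ there are a $U$-admissible blow-up $b\colon B\to X$ and a factorisation $B\to X'\to X$ over $X$. I expect this to be the main obstacle: it is the only nonformal input, and it is not obvious because $X$ (for instance $\Spec(A^\circ)$) need not be noetherian. It rests on the flattening theorem of Raynaud--Gruson (``platification par \'eclatement'') for quasi-compact and quasi-separated schemes: flattening $X'\to X$ by a $U$-admissible blow-up $b\colon B\to X$ — whose centre may be taken in $X\setminus U$ since $X'\to X$ is already an isomorphism there — produces a strict transform $\widetilde{X'}\to B$ that is proper, flat, of finite presentation, and an isomorphism over $b^{-1}(U)$, hence, as one checks, an isomorphism; its inverse composed with the projection to $X'$ is the desired map $B\to X'$. (Compare \cite{temkin-relative-zr} and \cite[7]{kerz-icm}.)

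For connectedness of $\iota/X'$, given two objects $(B_1,h_1)$ and $(B_2,h_2)$ I would form the fibre product $W:=B_1\times_{X'}B_2$. Since $X'\to X$ is separated, $W$ is a closed subscheme of $B_1\times_X B_2$ and therefore projective over $X$; moreover $W$ is an isomorphism over $U$ (being $U\times_U U$ there), so $W\in\Mdf(X,U)$. Applying the domination statement to $W$ yields a $U$-admissible blow-up $B_3\to X$ and a morphism $r\colon B_3\to W$ over $X$; with $h_3\colon B_3\to X'$ the composite of $r$ with the canonical map $W\to X'$, the two projections of $W$ give morphisms $(B_3,h_3)\to(B_1,h_1)$ and $(B_3,h_3)\to(B_2,h_2)$ in $\iota/X'$, the compatibility over $X'$ being exactly the universal property of $W$. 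Thus any two objects of $\iota/X'$ are joined by a zig-zag, $\iota/X'$ is connected, and together with the preceding paragraph this yields the cofinality of $\iota$ and hence the proposition.
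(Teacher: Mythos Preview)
Your proof is correct and follows the same route as the paper, only with more of the details made explicit. The paper simply cites Temkin's Lemma~2.1.5 for the statement that every $U$-modification is dominated by a $U$-admissible blow-up and then asserts cofinality; your argument via Raynaud--Gruson flattening (flatten $X'\to X$ by a $U$-admissible blow-up, then observe the strict transform is proper, flat, of finite presentation, and an isomorphism over the dense open $b^{-1}(U)$, hence an isomorphism) is precisely how Temkin's lemma is proved, so you have unpacked the citation rather than taken a different path. Your explicit verification of connectedness of $\iota/X'$ via fibre products is a genuine addition of detail---the paper relies implicitly on $\Mdf(X,U)$ being cofiltered (so that domination alone suffices for initiality of a full subcategory)---but the two arguments are formally equivalent. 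One small caveat: your ``as one checks, an isomorphism'' step tacitly uses that $b^{-1}(U)$ is schematically dense in $B$; this holds when $U$ is dense in $X$ (which is the case in all applications and is an implicit standing hypothesis for Temkin's lemma as well), so you may want to flag that assumption.
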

\begin{proof}
Since a blow-up in a finitely presented centre is projective and an isomorphism outside its centre, $\BL(X,U)$ lies in $\Mdf(X,U)$. On the other hand, every $U$-modification is dominated by a $U$-admissible blow-up \cite[Lem.~2.1.5]{temkin08}. Hence the inclusion is cofinal and the limits agree.\footnote{\Cf{} the proof of Lemma~\ref{hcdh=cdh--lemma}.}
\end{proof}

\begin{lemma} \label{ZR-reduced-lem}
The full subcategory $\Mdf^\mathrm{\,\,red}(X,U)$ spanned by reduced schemes is cofinal in $\Mdf(X,U)$.
\end{lemma}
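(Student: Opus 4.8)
The plan is to realise $\Mdf^{\mathrm{\,\,red}}(X,U)$ as a coreflective subcategory of $\Mdf(X,U)$, the coreflector being the reduction functor $X'\mapsto X'\red$. Granting this, a fully faithful functor which admits a right adjoint is cofinal in the sense relevant for limits, so $\skp{X}_U$ may equally well be computed as the limit over $\Mdf^{\mathrm{\,\,red}}(X,U)$; explicitly, for each $X'\in\Mdf(X,U)$ the closed immersion $X'\red\inj X'$ will be a terminal object of the category of reduced $U$-modifications equipped with an $X$-morphism to $X'$, which is therefore nonempty and connected.

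The first point to check is that if $X'\to X$ is a $U$-modification, then so is its reduction $X'\red\to X$. The canonical closed immersion $X'\red\inj X'$ is projective, hence so is the composite $X'\red\to X'\to X$. For the isomorphism condition over $U$, observe that $U$ is reduced, being open in the reduced scheme $X$; as $X'\to X$ is an isomorphism over $U$, the open subscheme $X'\times_X U$ of $X'$ is isomorphic to $U$ and hence reduced, so it is left unchanged by passing to the reduction, \ie{} $X'\red\times_X U = X'\times_X U \cong U$. Thus $X'\red\to X$ is still an isomorphism over $U$, so $X'\red\in\Mdf^{\mathrm{\,\,red}}(X,U)$, and $X'\red\inj X'$ is a morphism over $X$, i.e. a morphism in $\Mdf(X,U)$.

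Next I would set up the adjunction. Reduction is functorial for schemes over $X$, giving a functor $(-)\red\colon\Mdf(X,U)\to\Mdf^{\mathrm{\,\,red}}(X,U)$. For $Y\in\Mdf^{\mathrm{\,\,red}}(X,U)$ and $X'\in\Mdf(X,U)$, any morphism $Y\to X'$ over $X$ factors uniquely through $X'\red$, since $Y$ is reduced and $X'\red\inj X'$ has the universal property of the reduction, and the resulting morphism $Y\to X'\red$ is again over $X$; conversely, composing with $X'\red\inj X'$ turns a morphism $Y\to X'\red$ into one $Y\to X'$ over $X$. These operations are mutually inverse and natural, whence a natural bijection $\Hom_{\Mdf(X,U)}(Y,X')\cong\Hom_{\Mdf^{\mathrm{\,\,red}}(X,U)}(Y,X'\red)$. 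So the full inclusion $\Mdf^{\mathrm{\,\,red}}(X,U)\inj\Mdf(X,U)$ admits $(-)\red$ as a right adjoint, and is therefore cofinal.

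The only non-formal ingredient is the verification that reduction preserves the two defining properties of a $U$-modification, and the one thing making that work is that $U$ is reduced; I do not expect any genuine obstacle beyond this bookkeeping.
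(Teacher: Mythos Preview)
Your argument is correct and rests on the same observation as the paper's proof: since $U$ is reduced, the closed immersion $X'\red\inj X'$ lies in $\Mdf(X,U)$ for every $X'$. The paper records only this single sentence; your packaging as a coreflection is a tidy elaboration, with the small bonus that it yields terminal objects in the comma categories $\Mdf^{\mathrm{\,\,red}}(X,U)_{/X'}$ directly, so cofinality follows without appealing to any cofiltered structure of $\Mdf(X,U)$.
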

\begin{proof}
As $U$ is reduced by assumption, the map $X'\red\inj X'$ is a $U$-admissible blow-up for every $X'\in\Mdf(X,U)$.
\end{proof}

The remainder of this section is merely fixing notation for the application of admissible Zariski-Riemann spaces to the context of Tate rings.

\begin{defin} \label{admissible-blow-up--defin}
 Let $(A,\Anod,\pi)$ be a Tate triple (Definition~\ref{tate-ring--defin}). Setting $X=\Spec(\Anod)$ and $U=\Spec(A)$, we are in the situation of Definition~\ref{U-admissible_ZR-space--defin}. For simplicity we denote
	\[
	\Adm(\Anod) := \Mdf(\Spec(A),\Spec(\Anod))
	\]
and call its objects \df{admissible blow-ups}. Furthermore, we call the locally ringed space
	\[
	\skp{\Anod}_A := \skp{\Spec(\Anod)}_{\Spec(A)} = \lim_{X\in\Adm(\Anod)} X
	\]
the \df{admissible Zariski-Riemann space} associated to the pair $(A,\Anod)$.
\end{defin}

\begin{remark}
The admissible Zariski-Riemann space $\skp{A_0}_A$ depends on the choice of the ring of definition $\Anod$. However, if $B_0$ is another ring of definition, then also the intersection $C_0 := A_0 \cap B_0$ is. Hence we get a cospan 
	\[
	\Spec(A_0) \To \Spec(C_0) \oT \Spec(B_0)
	\]
which is compatible with the inclusions of $\Spec(A)$ into these. Hence every admissible blow-up $X \to \Spec(C_0)$ induces by pulling back an admissible blow-up $X_{A_0} \to \Spec(A_0)$ and a morphism $X \to X_{A_0}$. Precomposed with the canonical projections we obtain a map $\skp{A_0}_A \to X$. Hence the universal property yields a morphism $\skp{A_0}_A \to \skp{C_0}_A$. The same way, we get a morphism $\skp{B_0}_A \to \skp{C_0}_A$. One checks that the category of all admissible Zariski-Riemann spaces associated with $A$ is filtered.
\end{remark}

	\section{Cohomology of admissible Zariski-Riemann spaces}
	\label{sec-cohomology-ZR}

This section is the heart of this article providing the key ingredient for the proof of our main result; namely, a comparison of Zariski cohomology and rh-cohomology for admissible Zariski-Riemann spaces (Theorem~\ref{Hzar=Hrh--thm}). This will be done in two steps passing through the biZariski topology.

\begin{defin}
Let $S$ be a noetherian scheme. The \df{biZariski topology} is the topology generated by Zariski covers as well as by \df{closed covers}, \ie{} covers of the form $\{Z_i\to X\}_i$ where $X\in\Sch_S$ and the $Z_i$ are \emph{finitely many} jointly surjective closed subschemes of $X$. This yields a site $\Sch_S^\biZar$. 
\end{defin}

\begin{lemma}
The points on the biZariski site (in the sense of Goodwillie-Lichtenbaum \cite[\S 2]{goodwillie-lichtenbaum}) are precisely the spectra of integral local rings.
\end{lemma}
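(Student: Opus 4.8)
The plan is to produce mutually inverse assignments between the points of $\Sch_S^\biZar$ and the spectra of integral local rings over $S$. Recall that, in the sense of Goodwillie--Lichtenbaum, a point $p$ is represented by a cofiltered pro-object in $\Sch_S$ which is \emph{local} for the topology, \ie{} such that every $\biZar$-covering of one of its terms is refined, by some transition morphism, into a single member of that covering; since affine neighbourhoods are cofinal we may take the pro-object of the form $(\Spec A_i)_i$ with affine transition morphisms, and we set $R_p:=\colim_i A_i$, so that the associated scheme is $\Spec(R_p)=\lim_i\Spec A_i$. Two pro-objects present the same point precisely when the rings $R_p$ are isomorphic over $S$. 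It therefore suffices to show: (a) $R_p$ is always an integral local ring; (b) for every integral local ring $R$ with a morphism $\Spec R\to S$ the scheme $\Spec R$ is a point; (c) $p$ is recovered from $R_p$.

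For (a), locality of $R_p$ is the familiar fact that a pro-object local with respect to Zariski covers has local coordinate ring, being a filtered colimit of local rings along local homomorphisms. For the integral domain property, assume $a,b\in R_p$ satisfy $ab=0$ while $a\neq 0\neq b$, and choose a stage $i$ at which $a,b$ are represented by elements of $A_i$ with $ab=0$ already in $A_i$. The closed subschemes $V(a)$ and $V(b)$ of $\Spec A_i$ are jointly surjective, since $V(a)\cup V(b)=V(ab)=\Spec A_i$, so $\{V(a)\hookrightarrow\Spec A_i,\,V(b)\hookrightarrow\Spec A_i\}$ is a $\biZar$-covering. By locality of the pro-object some transition morphism $\Spec A_j\to\Spec A_i$ factors through $V(a)$ or through $V(b)$, whence $a=0$ or $b=0$ in $A_j$, hence in $R_p$ — a contradiction. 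Thus $R_p$ is a domain (taking $b=a$ already gives reducedness).

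For (b), the scheme $\Spec R$ defines a functor on presheaves $F\mapsto\colim_{(\Spec R\to X)}F(X)$, the colimit over affine $S$-schemes $X$ equipped with an $S$-morphism $\Spec R\to X$; this functor is left exact and preserves colimits, it factors through $\biZar$-sheafification, and it is a point of the topos as soon as every $\biZar$-covering admits a section after base change along $\Spec R\to X$. It is enough to check this for the generating covers. Zariski coverings split because $R$ is local. For a closed cover $\{Z_j\hookrightarrow X\}_j$ and a morphism $f\colon\Spec R\to X$, the pullbacks $f^{-1}(Z_j)$ are jointly surjective closed subschemes of $\Spec R$, so the generic point of $\Spec R$ — which exists and is unique since $R$ is an integral domain — lies in some $f^{-1}(Z_j)$; as $R$ is reduced, the ideal defining $f^{-1}(Z_j)$ is the zero ideal, so $f$ factors through $Z_j$. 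An induction on the number of steps in an iterated covering upgrades this to a section of an arbitrary $\biZar$-covering, so $\Spec R$ is a point.

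Finally (c) comes down to the identifications $R_{p_R}\cong R$ and $p_{R_p}\cong p$. The first holds because every ring is the filtered union of its finitely generated subrings, so, working over affine opens of $S$, one gets $R=\colim_{(\Spec R\to X)}\O(X)$. The second holds because a point of a topos is determined by the inverse-image functor on representable sheaves, and this functor sends $h_X$ for $X\in\Sch_S$ to $\colim_i\Hom_S(\Spec A_i,X)=\Hom_S(\Spec R_p,X)$, the equality because $X$ is of finite presentation over the noetherian scheme $S$ while the transition morphisms of the pro-object are affine. Hence $p\mapsto\Spec(R_p)$ is the asserted bijection. I expect the only genuine obstacle to be the interaction between the domain condition and the closed-cover axiom — deriving the contradiction from the covering $\{V(a),V(b)\}$ in (a), and exploiting irreducibility and reducedness of $\Spec R$ against an arbitrary closed cover in (b); everything else is a transcription of the Zariski case together with the standard limit formalism for schemes of finite presentation over a noetherian base.
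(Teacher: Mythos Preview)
Your argument is correct. The paper, by contrast, dispatches the lemma in one sentence: it observes that points for the Zariski topology are spectra of local rings, that points for the closed topology are spectra of integral rings (citing Gabber--Kelly), and then implicitly uses that a pro-object is local for a topology generated by two families of covers precisely when it is local for each family separately, so the biZariski points are the intersection.

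What you do differently is unpack this last step by hand: your part~(a) isolates the contradiction from the closed cover $\{V(a),V(b)\}$ to force the domain condition, and your part~(b) uses irreducibility plus reducedness of $\Spec R$ to split an arbitrary closed cover. This makes the proof self-contained and avoids the external reference to Gabber--Kelly, at the cost of some length. The paper's approach is more efficient if one is willing to quote the characterisation of closed-topology points as a black box; yours is more transparent about why exactly the domain condition matches the closed-cover axiom. One small remark on your wording in~(b): the relevant fact is not merely that $R$ is reduced but that it is a domain, so that the generic point is the zero ideal and hence any ideal contained in it vanishes; your argument is correct, but the emphasis on ``reduced'' slightly obscures this.
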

\begin{proof}
This follows from the fact that local rings are points for the Zariski topology and integral rings are points for the closed topology \cite{gabber-kelly}.
\end{proof}

\begin{lemma} \label{closed_cover_refinement--lem}
Let $X$ be a noetherian scheme. The cover of $X$ by its irreducible components refines every closed cover.
\end{lemma}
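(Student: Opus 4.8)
The plan is to reduce everything to an elementary fact about reduced closed subschemes, with noetherianity entering only to guarantee finitely many irreducible components. Write $X_1,\dots,X_n$ for the irreducible components of $X$, each equipped with its reduced induced closed subscheme structure, so that $\{X_\alpha\to X\}_{\alpha}$ is itself a closed cover. Given an arbitrary closed cover $\{Z_i\to X\}_{i}$ consisting of finitely many jointly surjective closed subschemes, refining it means producing, for each $\alpha$, an index $i$ together with a morphism $X_\alpha\to Z_i$ over $X$; since all the maps in sight are closed immersions into $X$, such a morphism is unique if it exists and is automatically a closed immersion.

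First I would fix a component $X_\alpha$ with generic point $\eta_\alpha\in X$. As the $Z_i$ are jointly surjective and each $Z_i\hookrightarrow X$ is injective on points with closed image, there is an $i$ with $\eta_\alpha\in Z_i$; since $Z_i$ is closed in $X$ we get $X_\alpha=\overline{\{\eta_\alpha\}}\subseteq Z_i$ as closed subsets of $X$. It remains to upgrade this topological containment to a factorisation of schemes. Working on an affine open $\Spec R\subseteq X$, let $I\subseteq R$ cut out $Z_i$ and let $I_\alpha\subseteq R$ be the radical ideal cutting out $X_\alpha$; the inclusion $V(I_\alpha)\subseteq V(I)$ of closed subsets is equivalent to $\sqrt I\subseteq\sqrt{I_\alpha}$, and hence gives $I\subseteq\sqrt I\subseteq\sqrt{I_\alpha}=I_\alpha$, where the final equality uses that $X_\alpha$ carries the reduced structure. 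These local inclusions glue to an inclusion of ideal sheaves $\mathcal I_{Z_i}\subseteq\mathcal I_{X_\alpha}$ on $X$, which is precisely the data of a closed immersion $X_\alpha\hookrightarrow Z_i$ compatible with the immersions into $X$. Running this over all $\alpha$ shows that $\{X_\alpha\to X\}$ refines $\{Z_i\to X\}$.

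I do not anticipate a genuine obstacle: the argument is almost formal. The only point that needs care is the passage from a topological containment of closed subsets to an inclusion of the corresponding ideal sheaves, where one must invoke the reduced (equivalently, radical) scheme structure on the irreducible components — without that hypothesis the asserted refinement would fail. Everything else is bookkeeping: passing to an affine cover, checking that the glued map is compatible with the structure morphisms to $X$, and recalling that a noetherian scheme has only finitely many irreducible components.
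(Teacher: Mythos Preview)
Your proof is correct and follows the same core idea as the paper: pick the generic point $\eta_\alpha$ of each irreducible component, observe that it lies in some $Z_i$, and conclude $X_\alpha=\overline{\{\eta_\alpha\}}\subseteq Z_i$. The paper stops at this topological containment, whereas you go on to explain why the reduced structure on $X_\alpha$ upgrades this to an actual closed immersion $X_\alpha\hookrightarrow Z_i$ over $X$; that extra paragraph is a genuine clarification the paper leaves implicit, but it does not constitute a different strategy.
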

\begin{proof}
Let $(X_i)_i$ be the irreducible components of $X$ with generic points $\eta_i\in X_i$. Let $X = \bigcup_\alpha Z_\alpha$ be a closed cover. For every $i$ there exists an $\alpha$ such that $\eta_i\in Z_\alpha$, hence $X_i = \overline{\{\eta_i\}} \subseteq \overline Z_\alpha = Z_\alpha$. By maximality of the irreducible components we have equality.
\end{proof}

\begin{lemma} \label{constant-biZar_sheaf--lemma}
Let $S$ be a noetherian scheme. Every constant Zariski sheaf on $\Sch_S$ is already a bi-Zariski sheaf.
\end{lemma}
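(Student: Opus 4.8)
The plan is to verify the sheaf condition directly. Write $F$ for the Zariski-sheafification of the constant presheaf on $\Sch_S$ with value $M$ (a set, or an abelian group). Concretely, for $Y\in\Sch_S$ one has $F(Y)=M^{c(Y)}$, the set of locally constant functions $Y\to M$, where $c(Y)$ denotes the set of connected components of $Y$ — finite, since $Y$ is noetherian, and each component clopen — and a morphism $g\colon Y'\to Y$ acts via the induced map $c(Y')\to c(Y)$; in particular $F(Y)$ depends only on the underlying topological space of $Y$. Since $F$ is already a Zariski sheaf, it suffices to verify the sheaf condition for closed covers $\{Z_i\to X\}_{i=1}^{n}$ (taken finite, as $X$ is quasi-compact); the general biZariski case then follows by combining this with Zariski descent, after observing that every biZariski cover refines to one of the form ``a closed cover of $X$ followed by Zariski covers of its members'' (one pushes all closed parts to the front, replacing a closed subscheme of a Zariski open by its scheme-theoretic closure in $X$). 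So I must show that
\[
F(X) \;=\; \Bigl\{\, (s_i)_i \in \textstyle\prod_i F(Z_i) \;\Bigm|\; s_i|_{Z_i\times_X Z_j} = s_j|_{Z_i\times_X Z_j} \text{ for all } i,j \,\Bigr\}.
\]

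First I would make this purely topological. The fibre product $Z_i\times_X Z_j$ is the scheme-theoretic intersection of $Z_i$ and $Z_j$ in $X$, so its underlying space is $|Z_i|\cap|Z_j|$. As $F$ only sees underlying spaces (indeed only connected components), the displayed identity becomes a statement about the coherent space $|X|$ equipped with the finite closed cover $|X|=\bigcup_{i=1}^n |Z_i|$: that locally constant $M$-valued functions satisfy descent along it.

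This last step I would handle by an explicit gluing. Injectivity of $F(X)\to\prod_i F(Z_i)$ is clear since the $Z_i$ are jointly surjective. Given a compatible family $(f_i)_i$ with each $f_i\colon |Z_i|\to M$ locally constant and $f_i=f_j$ on $|Z_i|\cap|Z_j|$, define $f\colon|X|\to M$ by $f(x)=f_i(x)$ for any $i$ with $x\in Z_i$; this is well defined by compatibility and joint surjectivity, and $f|_{Z_i}=f_i$. For each $m\in M$ we have $f^{-1}(m)=\bigcup_{i=1}^n f_i^{-1}(m)$, a finite union of sets closed in $X$ (each $f_i^{-1}(m)$ is clopen in $Z_i$, hence closed in $X$), so $f^{-1}(m)$ is closed; and since $X$ is noetherian each $f_i$, hence $f$, takes only finitely many values, so $f^{-1}(m)$ is also the complement of finitely many closed sets, hence open. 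Thus $f$ is locally constant, i.e.\ $f\in F(X)$, and it restricts to $(f_i)_i$. Hence $F$ is a biZariski sheaf.

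The only point requiring genuine care is the reduction in the first paragraph — confirming that closed-cover descent, together with the already available Zariski descent, really suffices for the generated biZariski topology; the gluing itself is elementary, and the noetherian hypothesis is used only to keep the relevant sets of values and of connected components finite.
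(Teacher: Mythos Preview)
Your gluing argument is correct and is a pleasant variant of the paper's: the paper first reduces (via its Lemma~\ref{closed_cover_refinement--lem}) to the cover by irreducible components and then argues local constancy pointwise by intersecting neighbourhoods, whereas you work with an arbitrary finite closed cover and show each fibre $f^{-1}(m)$ is clopen directly, using noetherianness to guarantee only finitely many values occur. Both are fine; yours avoids the auxiliary lemma.

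The one genuine gap is the reduction you flag at the end. Your sketched justification --- ``push all closed parts to the front by taking scheme-theoretic closures'' --- does not work as stated: if $Z$ is closed in an open $U\subset X$, then $Z=\overline{Z}\cap U$ is indeed open in $\overline{Z}$, but the other opens you would need to cover $\overline{Z}$ have no reason to land inside members of the original cover. (Try $X=U_1\cup U_2$ Zariski, then closed covers $\{Z_{ij}\}$ of each $U_i$: the set $\overline{Z_{1j}}\cap U_2$ is not controlled by the original data.) The correct argument is the standard one: since Zariski covers and closed covers are each stable under base change and consist of monomorphisms, a presheaf satisfying the sheaf condition for each type separately automatically satisfies it for any finite composite of such covers (a direct diagram chase, gluing first along the inner cover and then along the outer one), hence for the generated biZariski topology. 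The paper itself glosses over this point, simply asserting that it suffices to check closed covers; so your instinct that this is the delicate step was right, only the proposed fix needs replacing.
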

\begin{proof}
Let $A$ be an abelian group. For an open subset $U$ of $X\in\Sch_S$, the sections over $U$ are precisely the locally constant functions $f\colon U \to A$. By Lemma~\ref{closed_cover_refinement--lem}, it suffices to check the sheaf condition for the cover of $U$ by its irreducible components $(U_i)_i$. We only have to show the glueing property. If $f_i\colon U_i \to A$ are locally constant functions which agree on all intersections, then they glue to a function $f\colon U\to A$. We have to show that $f$ is locally constant. If $x\in U$, for every $i$ such that $x\in U_i$ there exists an open neighbourhood $V_i$ of $x$ in $U$ such that $f$ becomes constant when restricted to $U_i\cap V_i$. Hence $f$ becomes also constant when restricted to the intersection of all these $V_i$. Thus $f$ is locally constant.
\end{proof}

\begin{lemma} \label{Hzar=Hbizar--lemma}
Let $S$ be a noetherian scheme and let $X\in\Sch_S$. For any constant sheaf $A$ on $\Sch_S^\Zar$ we have $\Hzar^*(X;A) \cong \Hbizar^*(X;A)$.
\end{lemma}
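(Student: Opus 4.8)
The plan is to prove the stronger statement that, for a constant sheaf, big Zariski cohomology already satisfies descent for the extra covers of the biZariski topology, so that it also computes biZariski cohomology. Write $\epsilon\colon\Sch_S^\biZar\to\Sch_S^\Zar$ for the morphism of sites induced by the identity (the biZariski topology being finer), and consider the presheaf
\[
\mathcal G\colon \Sch_S\op \to D(\Ab), \qquad U \mapsto \mathrm{R}\Gamma_\Zar(U;A)
\]
of complexes of abelian groups; for constant $A$ this agrees with the sheaf cohomology of the underlying topological space of $U$. I would show that $\mathcal G$ is a biZariski sheaf and that the natural map $A\to\mathcal G$ is a biZariski-local equivalence; then $\mathcal G\simeq\mathrm{R}\Gamma_\biZar(-;A)$ and evaluating at $X$ gives the lemma. (Equivalently, one is showing that the higher direct images $\mathrm{R}^q\epsilon_*A$ vanish for $q>0$.)

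First I would verify biZariski descent for $\mathcal G$. As the biZariski topology is generated by Zariski covers and closed covers, and $\mathcal G$ satisfies Zariski descent (cohomological descent for open covers), it is enough to treat a closed cover $\{Z_1,\dots,Z_n\}$ of some $X\in\Sch_S$. Here I would write down the complex of abelian sheaves on the space $X$,
\[
0\to A\to\bigoplus_i(\iota_i)_*A\to\bigoplus_{i<j}(\iota_{ij})_*A\to\cdots,
\]
with $\iota_i\colon Z_i\hookrightarrow X$, $\iota_{ij}\colon Z_i\cap Z_j\hookrightarrow X$ the closed immersions and the differentials the alternating sums of restriction maps, and check that it is exact by looking at stalks: at $x\in X$ the set $\{i:x\in Z_i\}$ is non-empty by joint surjectivity, and the stalk complex is the acyclic reduced \Cech{} complex of a simplex with coefficients $A$. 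Applying $\mathrm{R}\Gamma(X;-)$ and using that closed immersions are exact with $\mathrm{R}\Gamma(X;(\iota_i)_*A)=\mathrm{R}\Gamma(Z_i;A)$ then identifies $\mathcal G(X)$ with the totalisation of the \Cech{} complex of the closed cover, which is the desired descent statement. The same stalkwise computation also gives descent along closed hypercovers, and since every scheme in $\Sch_S$ is noetherian of finite dimension, hyperdescent is no stronger than descent here.

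Second, I would check that $A\to\mathcal G$ induces isomorphisms on all biZariski cohomology sheaves. In degree $0$ this is precisely Lemma~\ref{constant-biZar_sheaf--lemma}. In degree $n>0$ the cohomology presheaf of $\mathcal G$ is $U\mapsto\Hzar^n(U;A)$, and I would show it is biZariski-locally zero: for any $X$, the irreducible components of $X\red$, each with its reduced structure, form a closed cover of $X$ (\cf{} Lemma~\ref{closed_cover_refinement--lem}), hence a biZariski cover, and on each of these irreducible schemes the constant sheaf is flasque, so $\Hzar^n$ vanishes there. Hence the positive cohomology presheaves of $\mathcal G$ sheafify to zero, $A\to\mathcal G$ is a biZariski-local equivalence, and -- $\mathcal G$ being a biZariski sheaf by the previous step -- $\mathcal G\simeq\mathrm{R}\Gamma_\biZar(-;A)$, giving $\Hzar^*(X;A)\cong\Hbizar^*(X;A)$.

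The step I expect to be the main subtlety is the identification $\mathrm{R}\Gamma(X;(\iota_i)_*A)=\mathrm{R}\Gamma(Z_i;A)$ and, underlying it, the claim that the restriction of the constant sheaf along a closed immersion is again the constant sheaf: this is false for general sheaves (for instance the structure sheaf is not even a biZariski sheaf), so one must stay honest about using constant coefficients throughout. A minor bookkeeping point is to make sure the \Cech{} nerves of all the covers involved remain inside $\Sch_S$, which holds because $\Sch_S$ is stable under fibre products.
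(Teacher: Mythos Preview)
Your proof is correct and takes a genuinely different route from the paper's. The paper argues via the Leray spectral sequence for $u\colon\Sch_S^\biZar\to\Sch_S^\Zar$ and shows $R^qu_*A=0$ for $q>0$ by computing Zariski stalks at local schemes: for an irreducible local scheme the biZariski cohomology vanishes directly, and a general local scheme is handled by a Mayer--Vietoris induction on the number of irreducible components, degree by degree, using an embedding into an injective sheaf to pass from degree $q$ to $q+1$. You instead package Zariski cohomology as a $D(\Ab)$-valued presheaf $\mathcal G$, verify biZariski descent for it via the explicit closed-cover resolution $0\to A\to\bigoplus_i(\iota_i)_*A\to\cdots$ on the underlying space, and kill the higher cohomology presheaves biZariski-locally using that constant sheaves on irreducible spaces are flasque. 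Both arguments ultimately rest on the same geometric input (trivial higher cohomology of constant sheaves on irreducible spaces); your packaging is more conceptual and avoids the inductive Mayer--Vietoris bookkeeping, while the paper's argument stays entirely within classical abelian-sheaf language and therefore does not need to discuss hypercompleteness. On that last point, your passage from ``$\mathcal G$ is a biZariski sheaf and $A\to\mathcal G$ is a biZariski-local equivalence'' to ``$\mathcal G\simeq\mathrm R\Gamma_\biZar(-;A)$'' does use that biZariski sheaves are hypercomplete, and your justification (``noetherian of finite dimension'') is not available in the stated generality, since the lemma assumes only that $S$ is noetherian. This is easy to patch---for instance by checking that the biZariski cd-structure is bounded in Voevodsky's sense, or by reverting to the Leray spectral sequence at exactly that step---but it deserves a sentence.
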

\begin{proof}
Let $u\colon \Sch_S^\biZar \to \Sch_S^\Zar$ be the change of topology morphism of sites. Using the Leray spectral sequence
	\begin{align*}
		\Hzar^p(X;R^qu_*A) \Rightarrow \Hbizar^{p+q}(X;A)
	\end{align*}
it is enough to show that the higher images $R^qu_*A$ vanish for $q>0$. We know that $R^qu_*A$ is the Zariski sheaf associated with the presheaf
	\begin{align*}
		\Sch_S^\Zar \ni U \mapsto \Hbizar^q(U;A)
	\end{align*}
and that its stalks are given by $\Hbizar^q(X;A)$ for $X$ a local scheme (\ie{} the spectrum of a local ring). As the biZariski sheafification of $R^qu_*A$ is zero and using Lemma~\ref{closed_cover_refinement--lem}, we see that $\Hbizar^q(X;A)=0$ for every irreducible local scheme $X$. For a general local scheme $X$ we can reduce to the case where $X$ is covered by two irreducible components $Z_1$ and $Z_2$.

First, let $q=1$. We have an exact Mayer-Vietoris sequence
	\begin{align*} \tag{$\Delta$}
	0 \to A(X) \to A(Z_1) \times A(Z_2) \to[\alpha] A(Z_1\cap Z_2) \to[\partial] \\ \to[\partial] \Hbizar^1(X;A) \to \Hbizar^1(Z_1;A) \times \Hbizar^1(Z_2;A) \to \Hbizar^1(Z_1 \cap Z_2;A).
	\end{align*}
Since local schemes are connected, the map $\alpha$ is surjective, hence $\partial=0$ and the second line remains exact with a zero added on the left. Thus $\Hbizar^1(X;A)=0$ for any local scheme, hence $R^1u_*A$ vanishes.
For $q>1$ we proceed by induction. Let
	\begin{align*}
	0 \to A \to I \to G \to 0
	\end{align*}
be an exact sequence of biZariski sheaves such that  $I$ is injective. This yields a commutative diagram with exact rows and columns
	\[
	\begin{xy} 
	\xymatrix{
	I(Z_1) \times I(Z_2) \ar[r] \ar[d]
	& I(Z_1\cap Z_2) \ar[r] \ar[d]
	& 0		
	\\
	G(Z_1) \times G(Z_2) \ar[r]^-{\beta}
	& G(Z_1\cap Z_2)  \ar[d]
	\\
	& \Hbizar^1(Z_1\cap Z_2;A).
	}
	\end{xy}
	\]
Being a closed subscheme of a local scheme, $Z_1\cap Z_2$ is also a local scheme. By the case $q=1$, the group $\Hbizar^1(Z_1\cap Z_2;A)$ vanishes. Hence the map $\beta$ is surjective. Using the analogous Mayer-Vietoris sequence $(\Delta)$ above for $G$ instead of $A$, we can conclude that $R^2u_*A \cong R^1u_*G = 0$. Going on, we get the desired vanishing of $R^qu_*A$ for every $q>0$.
\end{proof}

The remainder of the sections deals with the rh-topology defined by Goodwillie-Lichtenbaum \cite{goodwillie-lichtenbaum}. We freely use results which are treated in a more detailed way in Appendix~\ref{sec-rh-topology}.

\begin{defin} \label{abs-rh--defin}
An \df{abstract blow-up square} is a cartesian diagram of schemes
	\[ \tag{abs}
	\xymatrix{
	E \ar[d] \ar[r] & \tilde X \ar[d] \\
	Z  \ar[r] & X
	}
	\]
where $Z\to X$ is a closed immersion, $\tilde X\to X$ is proper, and the induced morphism $\tilde X \setminus E \to X \setminus Z$ is an isomorphism. 
For any noetherian scheme $S$, the \df{rh-topology} on $\Sch_S$ is the topology generated by Zariski squares and covers $\{Z\to X, \tilde{X}\to X\}$ for every abstract blow-up square (abs) as well as the empty cover of the empty scheme.
\end{defin}

\begin{notation*}
\textbf{For the rest of this section, let $X$ be a reduced quasi-compact and quasi-separated scheme and let $U$ be a quasi-compact \emph{dense} open subscheme of $X$. We denote by $Z$ the closed complement equipped with the reduced scheme structure.}
\end{notation*}

\begin{defin} \label{admissible-blow-up2--defin}
For any morphism $p\colon X'\to X$ we get an analogous decomposition
	\[
	X'_Z \To X' \oT X'_U
	\]
where $X'_Z := X' \times_XZ$ and $X'_U := X'\times_XU$. By abuse of nomenclature, we call $X_Z$ the \df{special fibre} of $X'$ and $X_U$ the \df{generic fibre} of $X'$. An \df{(abstract) admissible blow-up} of $X'$ is a proper map $X''\to X'$ inducing an isomorphism $X''_U\iso X'_U$ over $X$. In particular, one obtains an abstract blow-up square
	\carre{X''_Z}{X''}{X'_Z}{X'.}
\end{defin}

At the end of this section, we will see that the Zariski cohomology and the rh-cohomology on the Zariski-Riemann space coincide for constant sheaves (Theorem \ref{Hzar=Hrh--thm}).
The following proposition will be used in the proof to reduce from the rh-topology to the biZariski topology. 

\begin{prop} \label{closed-cdh--prop}
Assume $X$ to be noetherian and let $X'\in\Sch_X$. Then for every proper rh-cover of the special fibre $X'_Z$ there exists an admissible blow-up $X'' \to X'$ such that the induced rh-cover of $ X''_Z$ can be refined by a closed cover.
\end{prop}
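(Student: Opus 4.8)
The plan is to reduce a proper rh-cover of the special fibre $X'_Z$ to a closed cover after a single admissible blow-up, by peeling off the two generators of the rh-topology one after the other. First I recall that a proper rh-cover of a noetherian scheme $Y$ (here $Y=X'_Z$) can, by general facts about the rh-topology collected in Appendix~\ref{sec-rh-topology} (essentially the structure theorem for rh-covers as composites of Zariski covers and abstract blow-up covers, cf.\ Goodwillie--Lichtenbaum \cite{goodwillie-lichtenbaum}), be taken to be a composite of finitely many abstract blow-up squares and Zariski covers. Since ``proper'' means the cover admits a finite subcover by proper maps, I will first dispose of the Zariski part: a Zariski cover of a noetherian scheme is refined by finitely many closed subschemes only if it is already trivial on each irreducible component, so the genuinely new input is the abstract blow-up (blow-up with centre disjoint from a dense open, in the limit a $U$-admissible one), and it is here that the admissible blow-up $X''\to X'$ will be produced.

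The key step is the following: an abstract blow-up of $X'_Z$ with some centre $W\subseteq X'_Z$ should be realised, after pulling back along an admissible blow-up $X''\to X'$, by a map that is an isomorphism away from $W$ and proper. The natural move is to take $X''\to X'$ to be the blow-up of $X'$ along (the reduced closure of) $W$; since $W\subseteq X'_Z$ lies over the closed complement $Z$ of $U$, this is indeed $U$-admissible, hence an admissible blow-up in the sense of Definition~\ref{admissible-blow-up2--defin}. Iterating this over the finitely many abstract blow-up squares occurring in the chosen presentation of the rh-cover, and composing the resulting admissible blow-ups (the category of admissible blow-ups is filtered, so a single dominating $X''$ exists), I get one admissible blow-up $X''\to X'$ over which the pulled-back rh-cover of $X''_Z$ becomes a composite of a Zariski cover and closed covers only. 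Finally, by Lemma~\ref{closed_cover_refinement--lem} the cover of $X''_Z$ by its irreducible components refines every closed cover, and a short argument (the irreducible components are themselves closed subschemes, and a Zariski cover restricted to an irreducible component is refined by that component) shows that the composite is refined by a single closed cover of $X''_Z$, as required.

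The main obstacle I anticipate is the bookkeeping in the induction: controlling what happens to the \emph{generic fibre} when one takes the blow-up of $X'$ along $W$. One must check that blowing up $X'$ along a centre contained in $X'_Z$ does not disturb $X'_U$ — this is exactly why one restricts to centres disjoint from $U$, i.e.\ why the resulting map is an \emph{admissible} blow-up and lands in $\Mdf$ rather than in an arbitrary class of proper maps — and that the base change of this blow-up to $Z$ (equivalently, to $X'_Z$) does recover the prescribed abstract blow-up of $X'_Z$, at least up to refinement. The second, more technical point is that an rh-cover need not present itself as a single abstract blow-up square followed by a Zariski cover; one genuinely iterates, and at each stage the new centre lives in the special fibre of the \emph{current} model, so one must argue that finitely many iterations suffice and can be amalgamated into one admissible blow-up. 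Both points are handled by the finiteness built into the definition of a \emph{proper} rh-cover together with the filteredness of $\Adm$ / $\Mdf(X,U)$.
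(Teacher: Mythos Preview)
Your instinct to blow up $X'$ along the centre $W\subseteq X'_Z$ is correct, and this is indeed the admissible blow-up $X''\to X'$ the paper uses. But the proposal never explains the mechanism by which the pulled-back cover becomes a \emph{closed} cover, and the check you flag in the ``obstacles'' paragraph is the wrong one: you do \emph{not} want the base change to $Z$ to recover the original abstract blow-up of $X'_Z$; you want the pulled-back proper map to become a closed immersion. That is the whole point, and it requires an argument.

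The missing ingredient is twofold. First, one must refine the given proper rh-cover of $X'_Z$ to an \emph{honest} blow-up square, i.e.\ with $Y'=\Bl_{V'}(X'_Z)$ rather than an arbitrary proper birational $\tilde Y\to X'_Z$; this is Lemma~\ref{hcdh=cdh--lemma} (via Temkin), and without it functoriality of blow-ups gives you nothing. Second, once $X'':=\Bl_{V'}(X')$ is formed, the exceptional divisor $V''\subset X''$ is Cartier, so $\Bl_{V''}(X'')=X''$ by the universal property of blow-ups; restricting to the special fibre, $\Bl_{V''}(X''_Z)\hookrightarrow X''_Z$ is then a closed immersion, and functoriality of blow-ups produces a map $\Bl_{V''}(X''_Z)\to Y'' := Y'\times_{X'_Z}X''_Z$ over $X''_Z$. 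Thus the pulled-back cover $V''\sqcup Y''\to X''_Z$ is refined by the closed cover $V''\sqcup\Bl_{V''}(X''_Z)\to X''_Z$. This single step handles the proper rh-cover in one go; there is no iteration, no Zariski part (the cover is \emph{proper}), and no appeal to irreducible components at the end.
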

\begin{proof}
We may assume that $X'$ is reduced. Every proper rh-cover can be refined by a birational proper rh-cover (Lemma~\ref{generic-iso--lem}). Thus a cover yields a blow-up square which can be refined by an honest blow-up square
	\carre{E'}{Y'}{V'}{X'_Z}
\ie{} an abstract blow-up square where $Y'=\Bl_{V'}(X'_Z)$ (Lemma~\ref{hcdh=cdh--lemma}). We consider the honest blow-up square
	\carre{V''}{X'' := \Bl_{V'}(X')}{V'}{X'}
which is an admissible blow-up as $V'\subseteq X'_Z$ and decomposes into two cartesian squares
	\[ 
	\begin{xy} 
	\xymatrix{
	V'' \ar[r] \ar[d] & \tilde X''_Z \ar[r] \ar[d] & X'' \ar[d] \\
	V' \ar[r] & X'_Z \ar[r] & X'		
	}
	\end{xy}
	\]
where all the horizontal maps are closed immersions. By functoriality of blow-ups, we obtain a commutative diagram
	\carre{\Bl_{V''}(X''_Z)}{\Bl_{V''}(X'')=X''}{X''_Z}{X''}
wherein both horizontal maps are closed immersions and the right vertical map is an isomorphism by the universal property of the blow-up. Thus $\Bl_{V''}(X''_Z) \to X''_Z$ is a closed immersion \cite[Rem.~9.11]{gw10}. 
Functoriality of blow-ups yields a commutative square
	\carre{\Bl_{V''}(X''_Z)}{\Bl_{V'}(X'_Z)=Y'}{X''_Z}{X'_Z.}
By the universal property of the pullback, there exists a unique map $\Bl_{V''}(X''_Z) \to Y'' := Y' \times_{X'_Z} X''_Z$ such that following diagram commutes.
	\[
	\xymatrix@R=2.5ex@C=1.2ex{ 
	&&& \Bl_{V''}(X''_Z) \ar[dd] \ar[ddrr] \ar[dl] \\
	E'' \ar[rr] \ar[dd] \ar[dr] && Y'' \ar'[d][dd] \ar[dr] \\
	& V'' \ar[dd] \ar[rr] && X''_Z \ar[dd] \ar[rr] && X'' = \Bl_{V'}(X') \ar[dd] \\
	E' \ar[dr] \ar'[r][rr] && Y' \ar[dr] \\
	& V' \ar[rr] && X'_Z \ar[rr] && X'
	}
	\]
To sum up, we have shown that the pullback of the proper rh-cover $V'\sqcup Y'\to X'_Z$ along $X''_Z \to X'_Z$ can be refined by the closed cover $V'' \sqcup \Bl_{V''}(X''_Z) \to X''_Z$ which was to  be shown.
\end{proof}

Given a topology on (some appropriate subcategory of) the category of schemes, we want to have a corresponding topology on admissible Zariski-Riemann spaces. For this purpose, we will work with an appropriate site.

\begin{remark}
Let $\tau$ be a topology on the category $\Sch_X$. It restricts to a topology on the category $\Schqc_X$ of quasi-compact $X$-schemes. One obtains compatible topologies on the slice categories $\Schqc_{X'} = (\Schqc_X)_{/X'}$ for all $U$-modifications $X'\in\Mdf(X,U)$. 
\end{remark}

\begin{defin} \label{ZR-site--defin}
Consider the category
	\begin{align*}
		\Schqc(\skp{X}_U) := \colim_{X' \in \Mdf(X,U)} \Schqc_{X'}.
	\end{align*}
More precisely, the set of objects is the set of morphisms of schemes $Y'\to X'$ for some $X'\in\Mdf(X,U)$. The set of morphisms between two objects $Y'\to X'$ and $Y''\to X''$ is given by 
	\[
	\colim_{\tilde X} \, \Hom_\Sch(Y'\times_{X'}\tilde{X},Y''\times_{X''}\tilde{X})
	\]
where $\tilde{X}$ runs over all modifications $\tilde{X}\in\Mdf(X,U)$ dominating both $X'$ and $X''$.
Analogously, define the category 
	 \begin{align*}
	 \Schqc(\skp{X}_U\setminus U) := \colim_{X' \in \Mdf(X,U)} \Schqc_{X'\setminus U}
	 \end{align*}
where the $X'\setminus U$ are equipped with the reduced scheme structure.
\end{defin}

\begin{defin} \label{topology-zr--def}
Let $Y'\to X'$ be an object of $\Schqc(\skp{X}_U)$. We declare a sieve $R$ on $Y'$ to be a $\tau$\df{-covering sieve} of $Y'\to X'$ iff there exists a $U$-modification $p\colon X''\to X'$ such that the pullback sieve $p^*R$ lies in $\tau(Y'\times_{X'}X'')$. Analogously we define $\tau$-covering sieves in $\Schqc(\skp{X}_U\setminus U)$.
\end{defin}

\begin{lemma} \label{topology-zr--lem}
The collection of $\tau$-covering sieves in Definition~\ref{topology-zr--def} defines topologies on the categories $\Schqc(\skp{X}_U)$ and $\Schqc(\skp{X}_U\setminus U)$ which we will refer to with the same symbol $\tau$.
\end{lemma}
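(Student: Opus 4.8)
The plan is to verify the three axioms of a Grothendieck topology (in the sieve-theoretic formulation) for the proposed collection of $\tau$-covering sieves, reducing each axiom to the corresponding axiom for $\tau$ on the categories $\Schqc_{X''}$ via pullback along a sufficiently large $U$-modification. Throughout I use that $\Mdf(X,U)$ is cofiltered (or at least that any two $U$-modifications are dominated by a common one, which is Lemma~\ref{ZR-reduced-lem} and the remark on filteredness), so that finite collections of ``witnessing'' $U$-modifications can be replaced by a single one dominating all of them; I also use that pullback of sieves along a morphism of schemes is compatible with composition of such morphisms and that for $p\colon X''\to X'$ a $U$-modification the base-changed map $Y'\times_{X'}X''\to Y'$ is again (the pullback of) a $U$-modification, so $\tau$-coverings pull back to $\tau$-coverings in the ordinary sense.

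First I would check the \emph{maximality axiom}: the maximal sieve on $Y'\to X'$ is a $\tau$-covering sieve, which is immediate since its pullback along $\id_{X'}$ is the maximal sieve on $Y'$, and the maximal sieve is always a $\tau$-covering sieve on $\Schqc_{X'}$. Next, the \emph{stability (base-change) axiom}: if $R$ is a $\tau$-covering sieve of $Y'\to X'$ witnessed by $p\colon X''\to X'$, and $f\colon W'\to Y'$ is any morphism in $\Schqc(\skp{X}_U)$ over some $X'''\in\Mdf(X,U)$, I want $f^*R$ to be a $\tau$-covering sieve of $W'$. Choosing a $U$-modification $\tilde X$ dominating $X''$ and $X'''$, I pull everything back to $\tilde X$: the sieve $R$ pulls back to a $\tau$-covering sieve of $Y'\times_{X'}\tilde X$ (since $\tau$-coverings are stable under the further $U$-modification $\tilde X\to X''$), and then ordinary stability of $\tau$ on $\Schqc_{\tilde X}$ along the base change of $f$ shows that $(f^*R)\times_{X'''}\tilde X$ lies in $\tau$; unwinding the definition of morphisms in the colimit category, this says exactly that $\tilde X\to X'''$ witnesses $f^*R$ as a $\tau$-covering sieve.

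Finally, the \emph{local character (transitivity) axiom}: suppose $R$ is a sieve on $Y'\to X'$ and $S$ is a $\tau$-covering sieve of $Y'$ such that for every $(g\colon W'\to Y')\in S$ the pullback $g^*R$ is a $\tau$-covering sieve of $W'$; I must show $R$ is a $\tau$-covering sieve of $Y'$. Let $q\colon X''\to X'$ witness $S$. The main subtlety — and the step I expect to require the most care — is that $S$ has infinitely many members in general, each with its own witnessing $U$-modification, so one cannot naively pass to a single dominating modification. The fix is standard: after pulling back to $X''$, the sieve $S\times_{X'}X''$ is a genuine $\tau$-covering sieve of $Y'\times_{X'}X''$, so it is generated by (or at least contains) a covering \emph{family}; since $\Schqc$ consists of quasi-compact schemes and $\tau$ (Zar, Nis, rh, cdh in our cases) is quasi-compact, this family may be taken finite, indexed by $\{g_k\colon W'_k\to Y'\times_{X'}X''\}$. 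Each $g_k^*(R\times_{X'}X'')$ is a $\tau$-covering sieve witnessed by some $U$-modification $X''_k\to X''$; choosing $\tilde X\in\Mdf(X,U)$ dominating all finitely many $X''_k$ and pulling back once more, every $\tilde g_k^*(R\times_{X'}\tilde X)$ lies in $\tau$ on the relevant slice category, while $\{\tilde g_k\}$ still generates a $\tau$-cover of $Y'\times_{X'}\tilde X$. Ordinary transitivity of $\tau$ on $\Schqc_{\tilde X}$ then gives that $R\times_{X'}\tilde X$ lies in $\tau$, i.e. $\tilde X\to X'$ witnesses $R$ as a $\tau$-covering sieve. The identical argument applies verbatim on $\Schqc(\skp{X}_U\setminus U)$, since base change of $U$-modifications restricts to the closed complements and the reduced-structure convention is preserved under these pullbacks.
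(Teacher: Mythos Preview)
Your verification is correct and considerably more detailed than the paper's own proof, which reads in its entirety: ``This follows immediately from the construction.'' The paper treats the lemma as a formal consequence of the colimit-of-sites setup (implicitly relying on the general machinery later invoked via \stacks{09YP}), whereas you unpack the three sieve axioms by hand.

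The substantive content you add is the handling of the transitivity axiom. Your observation that one cannot naively dominate infinitely many witnessing $U$-modifications, and your resolution via quasi-compactness of $\tau$ on $\Schqc$ to extract a \emph{finite} generating family from the pulled-back covering sieve, is exactly the point that makes the argument work and is genuinely worth recording; the paper suppresses this entirely. Two minor remarks: your citation of Lemma~\ref{ZR-reduced-lem} for cofilteredness of $\Mdf(X,U)$ is slightly off---that lemma concerns cofinality of reduced modifications; cofilteredness is rather the standard fact that any two $U$-admissible blow-ups are dominated by a common one (blow up the product of the centres). Also, in the stability axiom you should make explicit that a morphism $f\colon W'\to Y'$ in the colimit category is by definition represented by an honest scheme morphism after passing to some common $\tilde X$, so that pulling back the sieve $R$ along $f$ reduces to an ordinary pullback of sieves in $\Schqc_{\tilde X}$; you gesture at this but it is the step where the colimit description of Hom-sets actually enters.
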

\begin{proof}
This follows immedeately from the construction.
\end{proof}
	
\begin{remark}
In practice, for working with the site $(\Schqc(\skp{X}_U),\tau)$ it is enough to consider $\tau$-covers in the category $\Schqc_X$ and identifying them with their pullbacks along $U$-modifications.
\end{remark}	

\begin{caveat}
The category $\Schqc(\skp{X}_U)$ is not a slice category, \ie{} a scheme $Y$ together with a morphism of locally ringed spaces $Y \to \skp{X}_U$ does not necessarily yield an object of $\Schqc(\skp{X}_U)$. Such objects were studied \eg{} by Hakim \cite{hakim}. In contrast, an object of $\Schqc(\skp{X}_U)$ is given by a scheme morphism $Y \to X'$ for some $X'\in\Mdf(X,U)$ and it is isomorphic to its pullbacks along admissible blow-ups.
\end{caveat}

In the proof of the main theorem we will need the following statement which follows from the construction of our site.

\begin{prop} \label{cohomology-zr--prop}
Let $F$ be a constant sheaf of abelian groups on $\Schqc(\skp{X}_U)$. Then the canonical morphism
	\[
	\colim_{X'\in\Mdf(X,U)} \Ho^*_\tau(X';F) \To \Ho^*_\tau(\skp{X}_U;F)
	\]
is an isomorphism. Analogously, if $F$ is a constant sheaf of abelian groups on $\Schqc(\skp{X}_U\setminus U)$, then the canonical morphism
	\[
	\colim_{X'\in\Mdf(X,U)} \Ho^*_\tau(X'_Z;F) \To \Ho^*_\tau(\skp{X}_U\setminus U;F)
	\]
is an isomorphism.
\end{prop}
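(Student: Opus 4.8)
The plan is to reduce the statement to a cohomological-descent computation for the filtered colimit site $\Schqc(\skp{X}_U)$ (resp.\ $\Schqc(\skp{X}_U\setminus U)$), using that sheaf cohomology on a filtered colimit of sites (with suitable finiteness) commutes with the colimit. More precisely, by construction $\Schqc(\skp{X}_U) = \colim_{X'\in\Mdf(X,U)} \Schqc_{X'}$ is a $2$-filtered colimit of sites along morphisms induced by the proper transition maps $X''\to X'$; the topology $\tau$ on the colimit is, by Definition~\ref{topology-zr--def}, the one for which a sieve covers iff it pulls back to a $\tau$-cover on some $X''$. The first step is therefore to set up the change-of-site morphisms $\pi_{X'}\colon \Schqc(\skp{X}_U)\to\Schqc_{X'}$ and $\pi_{X'}\colon \Schqc(\skp{X}_U\setminus U)\to\Schqc_{X'\setminus U}$ and to identify the pushforward of the constant sheaf.

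Second, I would invoke the standard fact that for a filtered system of sites with a terminal-object-preserving colimit and finitely-presentable objects, the cohomology of the colimit topos with coefficients in a ``constant'' (more precisely, a sheaf pulled back along the projections) abelian sheaf is the colimit of the cohomologies: schematically,
\[
\Ho^*_\tau\bigl(\skp{X}_U;F\bigr)\;\cong\;\colim_{X'\in\Mdf(X,U)} \Ho^*_\tau\bigl(X';\pi_{X'}^*F\bigr),
\]
and likewise for $\skp{X}_U\setminus U$ with $X'_Z=X'\setminus U$ in place of $X'$. Concretely this can be done by choosing, for each $X'$, an injective (or $\tau$-acyclic) resolution of $F|_{X'}$, noting that filtered colimits of injective/flasque objects remain acyclic for global sections on the colimit site (sections over a quasi-compact object are computed at a finite stage by Proposition~\ref{limit-top-spaces--prop}(i) applied to the underlying spaces, together with the description of $\Hom$-sets in Definition~\ref{ZR-site--defin} as a filtered colimit), and that filtered colimits are exact. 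Since $F$ is constant, $\pi_{X'}^*F$ is again the constant sheaf of the same abelian group $A$ on $\Schqc_{X'}$, so the right-hand side is exactly $\colim_{X'} \Ho^*_\tau(X';A)$.

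Third, I would package the two displayed isomorphisms: the global-sections functor on $\Schqc(\skp{X}_U)$ evaluated at the terminal object $\id\colon\skp{X}_U\to\skp{X}_U$ (which is the colimit of the terminal objects $\id\colon X'\to X'$) computes $\Ho^*_\tau(\skp{X}_U;F)$ by definition, and the compatibility of the transition maps identifies the colimit of the $\Ho^*_\tau(X';A)$ with it; the case of $\skp{X}_U\setminus U$ is identical, replacing each $X'$ by its reduced closed complement $X'_Z$ and using that $\Schqc(\skp{X}_U\setminus U)=\colim_{X'}\Schqc_{X'\setminus U}$.

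The main obstacle is the exactness/acyclicity bookkeeping in the colimit: one must check carefully that the derived functors of global sections on $\Schqc(\skp{X}_U)$ are computed as the filtered colimit of those on the $\Schqc_{X'}$. The key points making this work are that every object of $\Schqc(\skp{X}_U)$ and every cover of it descends to a finite stage (quasi-compactness, plus Proposition~\ref{limit-top-spaces--prop}(i) and the filtered-colimit description of morphism sets), so that \v{C}ech complexes — and hence the whole cohomological apparatus — are obtained by passing to a filtered colimit of the corresponding objects at finite levels; since filtered colimits of abelian groups are exact and preserve acyclic resolutions, the colimit of the cohomologies is the cohomology of the colimit. Once this descent-to-finite-level principle is in place, the statement is formal.
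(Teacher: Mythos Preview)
Your proposal is correct and follows essentially the same route as the paper, which simply cites \stacks{09YP} for the general statement that cohomology of a filtered colimit of sites with coefficients in a compatible system of abelian sheaves is the colimit of the cohomologies. Your sketch (objects and covers descend to a finite stage, \v{C}ech complexes are filtered colimits of those at finite levels, filtered colimits of abelian groups are exact) is exactly the mechanism behind that Stacks Project lemma, so there is no substantive difference in strategy---only in that the paper outsources the argument while you unpack it.
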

\begin{proof}
This is a special case of \stacks{09YP} where the statement is given for any compatible system of abelian sheaves.
\end{proof}

\begin{thm} \label{Hzar=Hrh--thm}
For any constant sheaf $F$ on $\Schqc_\rh(\skp{X}_U)$, the canonical map
	\[
	\Hzar^*(\skp{X}_U\setminus U;F) \To \Hrh^*(\skp{X}_U\setminus U;F)
	\]
is an isomorphism.
\end{thm}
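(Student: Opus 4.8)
The plan is to reduce the comparison between Zariski and rh-cohomology on the Zariski-Riemann space to the already-established comparison between Zariski and biZariski cohomology on ordinary noetherian schemes (Lemma~\ref{Hzar=Hbizar--lemma}). Concretely, one has change-of-topology morphisms of sites on $\Schqc(\skp{X}_U\setminus U)$
\[
\Schqc_\rh(\skp{X}_U\setminus U) \To \Schqc_\biZar(\skp{X}_U\setminus U) \To \Schqc_\Zar(\skp{X}_U\setminus U),
\]
and it suffices to show that each induces an isomorphism on cohomology of a constant sheaf $F$. For the second map this is essentially immediate from Proposition~\ref{cohomology-zr--prop}, which expresses the cohomology on $\skp{X}_U\setminus U$ as a colimit over $X'\in\Mdf(X,U)$ of the corresponding cohomology of $X'_Z$, together with Lemma~\ref{Hzar=Hbizar--lemma} applied on each $X'_Z$ and the fact that constant Zariski sheaves are already biZariski sheaves (Lemma~\ref{constant-biZar_sheaf--lemma}), so that filtered colimits of these isomorphisms give the claim.

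The substantive step is the first map, rh versus biZariski. Here I would argue via the Leray spectral sequence for $u\colon \Schqc_\rh(\skp{X}_U\setminus U) \to \Schqc_\biZar(\skp{X}_U\setminus U)$,
\[
\Hbizar^p(\skp{X}_U\setminus U; R^qu_*F) \Rightarrow \Hrh^{p+q}(\skp{X}_U\setminus U; F),
\]
so that it is enough to prove $R^qu_*F = 0$ for $q>0$. As in the proof of Lemma~\ref{Hzar=Hbizar--lemma}, $R^qu_*F$ is the biZariski-sheafification of $Y'\mapsto \Hrh^q(Y'; F)$, and its vanishing can be checked on the biZariski points, which are spectra of integral local rings. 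Thus one must show that $\Hrh^q(\Spec(R); F) = 0$ for $q>0$ whenever $R$ is an integral local ring appearing as a point of the site $\Schqc_\rh(\skp{X}_U\setminus U)$. The key geometric input is Proposition~\ref{closed-cdh--prop}: every proper rh-cover of the special fibre becomes, after passing to a further admissible blow-up, refinable by a closed cover, hence becomes a biZariski cover. Combined with Lemma~\ref{generic-iso--lem} and Lemma~\ref{hcdh=cdh--lemma} (that one may restrict to honest blow-up squares), this says that over $\skp{X}_U\setminus U$ the rh-topology and the biZariski topology generate the same covering sieves — in the colimit over modifications, the rh-covers of the special fibres are cofinally refined by closed covers — so that the rh-cohomology and biZariski-cohomology of a constant sheaf agree. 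One then feeds this back through the spectral sequence to get the vanishing of $R^qu_*F$.

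In more detail, I would handle the vanishing by an induction on $q$ mirroring Lemma~\ref{Hzar=Hbizar--lemma}: for a point given by an integral local ring the $q=0$ case is the sheaf condition, and for the inductive step one embeds $F$ into an injective biZariski sheaf $I$ with cokernel $G$, uses that closed subschemes of local schemes are local (so the relevant lower $\Hrh$ groups already vanish), applies the Mayer--Vietoris / connectedness argument to see the relevant comparison map is surjective, and concludes $R^{q+1}u_*F \cong R^qu_*G = 0$. Throughout, Proposition~\ref{cohomology-zr--prop} is what lets us transport statements about finitely many closed subschemes of honest schemes $X'_Z$ up to the Zariski-Riemann space. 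Assembling the two comparisons gives $\Hzar^*(\skp{X}_U\setminus U;F) \cong \Hbizar^*(\skp{X}_U\setminus U;F) \cong \Hrh^*(\skp{X}_U\setminus U;F)$, which is the assertion; the analogous statement with Nis/cdh in place of Zar/rh follows by the same argument with Nisnevich squares in place of Zariski squares.

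The main obstacle I expect is making precise that passing to a further admissible blow-up genuinely refines a given proper rh-cover by a closed cover \emph{compatibly with the colimit structure of the site} — i.e., checking that Proposition~\ref{closed-cdh--prop} can be applied uniformly enough that, after taking the colimit over $\Mdf(X,U)$, every rh-covering sieve on an object of $\Schqc(\skp{X}_U\setminus U)$ contains a biZariski-covering sieve. Once that cofinality of closed covers among rh-covers is in hand, the spectral sequence bookkeeping and the reduction to integral local points are routine adaptations of the proof of Lemma~\ref{Hzar=Hbizar--lemma}.
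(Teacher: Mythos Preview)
Your core observation is exactly the paper's argument: on the site $\Schqc(\skp{X}_U\setminus U)$, every rh-covering sieve is refined by a biZariski one, so the rh- and biZariski-topologies there coincide and hence so do the cohomologies; then Zariski versus biZariski is Lemma~\ref{Hzar=Hbizar--lemma} applied via Proposition~\ref{cohomology-zr--prop}. The paper does precisely this, and you should make one ingredient explicit that you only gesture at: before invoking Proposition~\ref{closed-cdh--prop}, an arbitrary rh-cover of $X'_Z$ must first be factored as a Zariski cover followed by a proper rh-cover via Proposition~\ref{rh-cover_factorisation--prop}. Proposition~\ref{closed-cdh--prop} then handles the proper part after an admissible blow-up, and you are done.

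Everything else you propose is unnecessary scaffolding, and parts of it would not work as written. Once the covering sieves agree, the cohomologies agree immediately; there is no Leray spectral sequence to run for the rh--biZariski comparison. The induction you sketch (embed $F$ into an injective, pass to the cokernel, use Mayer--Vietoris on local schemes) is lifted from the proof of Lemma~\ref{Hzar=Hbizar--lemma}, where it is tailored to \emph{closed} covers of local schemes; it has no analogue for abstract blow-up squares, so it does not give the rh--biZariski step. Likewise, the identification of biZariski points with spectra of integral local rings is stated for $\Sch_S$ with $S$ noetherian, not for the colimit site $\Schqc(\skp{X}_U\setminus U)$; you would need a separate argument to know what the points of the latter are. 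Drop the spectral sequence and the point-wise induction, keep the sieve-refinement argument, and you have the paper's proof verbatim.
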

\begin{proof}
By construction, any rh-cover of $\skp{X}_U\setminus U$ is represented by an rh-cover of $X'_Z$ for some $X'\in\Adm(\Anod)$. We find a refinement $\tilde{V}\to[q]\tilde{Y}\to[p]X'_Z$ where $p$ is a proper rh-cover and $q$ is a Zariski cover (Proposition~\ref{rh-cover_factorisation--prop}). The rh-cover $\tilde{Y}\to X'_Z$ is given by $Y'\sqcup V'\to X$ for an abstract blow-up square
	\carre{E'}{Y'}{V'}{X'_Z.}
This is the situation of Proposition~\ref{closed-cdh--prop}. Thus there exists an admissible blow-up $X''\to X'$ and a refinement $V'' \sqcup \Bl_{V''}(X''_Z) \to X''_Z$ of the pulled back cover which consists of two closed immersions. Hence we have refined our given cover of $\skp{X}_U\setminus U$ by a composition of a Zariski cover and a closed cover which yields a bi-Zariski cover. This implies that $\Hrh^*(\skp{X}_U\setminus U;F)$ equals $\Hbizar(\skp{X}_U\setminus U;F)$. Now the assertion follows from Lemma~\ref{Hzar=Hbizar--lemma}.
\end{proof}                  

\begin{cor} \label{Hzar=Hrh-colimit_cor}
For any constant sheaf $F$, we have
	\begin{align*}
		\colim_{X' \in \Mdf(X,U)} \Hzar^*(X'_Z;F) = \colim_{X' \in \Mdf(X,U)} \Hrh^*(X'_Z;F).
	\end{align*}
\end{cor}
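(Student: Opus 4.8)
The plan is to derive the corollary formally from Theorem~\ref{Hzar=Hrh--thm} together with the second statement of Proposition~\ref{cohomology-zr--prop}, so that essentially no new work is required. First I would apply Proposition~\ref{cohomology-zr--prop} twice, once with $\tau=\Zar$ and once with $\tau=\rh$. This identifies the left-hand side of the asserted equality, $\colim_{X'\in\Mdf(X,U)}\Hzar^*(X'_Z;F)$, with $\Hzar^*(\skp{X}_U\setminus U;F)$, and the right-hand side, $\colim_{X'\in\Mdf(X,U)}\Hrh^*(X'_Z;F)$, with $\Hrh^*(\skp{X}_U\setminus U;F)$. Then Theorem~\ref{Hzar=Hrh--thm} asserts precisely that the canonical change-of-topology map between these two groups on the Zariski--Riemann space is an isomorphism, and transporting this statement back along the two identifications gives the claim.

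The only point requiring a word of justification is that the canonical map in the statement — the colimit over $\Mdf(X,U)$ of the change-of-topology maps $\Hzar^*(X'_Z;F)\to\Hrh^*(X'_Z;F)$ — corresponds under these identifications to the change-of-topology map $\Hzar^*(\skp{X}_U\setminus U;F)\to\Hrh^*(\skp{X}_U\setminus U;F)$ appearing in Theorem~\ref{Hzar=Hrh--thm}. This is immediate: both maps are induced by the common morphism of sites passing from the (finer) rh-topology to the (coarser) Zariski topology, and the isomorphisms furnished by Proposition~\ref{cohomology-zr--prop} are natural in this morphism, so the relevant square of comparison maps commutes. I do not expect any genuine obstacle; all the mathematical content of the corollary already resides in Theorem~\ref{Hzar=Hrh--thm}, and the purpose of the statement is merely to record the colimit reformulation in the form that will be convenient later in the proof of the main result.
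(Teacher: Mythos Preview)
Your proposal is correct and matches the paper's own argument essentially verbatim: the paper simply says the corollary is a formal consequence of the construction of the topology on $\Schqc(\skp{X}_U)$ together with the fact that cohomology of a limit site is the colimit of the cohomologies (the same Stacks Project reference underlying Proposition~\ref{cohomology-zr--prop}), which is exactly your two applications of Proposition~\ref{cohomology-zr--prop} followed by Theorem~\ref{Hzar=Hrh--thm}. Your extra sentence on compatibility of the change-of-topology maps is a welcome clarification but not something the paper spells out.
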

\begin{proof}
This is a formal consequence of the construction of the topology on $\Schqc(\skp{X}_U)$ since the cohomology of a limit site is the colimit of the cohomologies \stacks{09YP}.
\end{proof}

	\section{Formal Zariski-Riemann spaces and adic spaces}
	\label{sec-formal-zr-adic}
		
In this section we deal with Zariski-Riemann spaces which arise from formal schemes. According to a result of Scholze they are isomorphic to certain adic spaces (Theorem~\ref{adic_space_is_ZR--thm}). This identification is used in the proof of the main theorem (Theorem~\ref{main-thm}) to obtain the adic spectrum $\Spa(A,A^\circ)$ in the statement. We start with some preliminaries on formal schemes; for a detailled account of the subject we refer to Bosch's lecture notes \cite[pt.~II]{bosch}.

\begin{notation*}
In this section, let $R$ be a ring of one of the following types (\cf{} \cite[\S 7.3]{bosch}):
\begin{itemize}
	\item[(V)] $R$ is an adic valuation ring with finitely generated ideal of definition $I$.
	\item[(N)] $R$ is a noetherian adic ring with ideal of definition $I$ such that $R$ does not have $I$-torsion.
\end{itemize}
An $R$-algebra is called \emph{admissible} iff it is of topologically finite presentation and without $I$-torsion \cite[\S 7.3, Def.~3]{bosch}. A formal $R$-scheme is called \emph{admissible} iff it has a cover by affine formal $R$-schemes of the form $\Spf(A_0)$ for admissible $R$-algebras $A_0$, \cf{} \cite[\S 7.4, Def.~1]{bosch}. 
\end{notation*}

\begin{defin} \label{algebraic-formal--def}
For a scheme $X$ over $\Spec(R)$ we denote by $\hat{X}$ its associated formal scheme  $ \colim_n X/I^n$ over $\Spf(R)$. A formal scheme which is isomorphic to some $\hat{X}$ is called \df{algebraic}. Setting $U:=X\setminus X/I$, for every $U$-admissible blow-up $X' \to X$ the induced morphism of $\hat{X'} \to \hat{X}$ is an admissible formal blow-up \cite[3.1.3]{egr}. An admissible formal blow-up $\Xcal' \to \hat{X}$ of an algebraic formal scheme is called \df{algebraic} whenever it is induced from a $U$-admissible blow up of $X$.
\end{defin}

\begin{example}
\begin{enumerate}
 	\item Any quasi-affine formal scheme is algebraic. Indeed, an affine formal scheme $\Spf(\Anod)$ is isomorphic to the formal completion of $\Spec(\Anod)$. The quasi-affine case is Lemma~\ref{quasi-affine_formal_scheme_algebraic--lemma} below.	
 	\item For a \na{} field $k$, every projective rigid $k$-space has an algebraic model. In fact, any closed subspace of the rigid analytic space $\P_k^{n,\mathrm{an}}$ is the analytification of a closed subspace of $\P_k^n$ by a GAGA-type theorem \cite[4.10.5]{fresnel-vdp}. Since $\P_k^{n,\mathrm{an}}$ can be obtained by glueing $n+1$ closed unit discs $\Bb^n_k=\Spm(k\skp{t_1,\ldots,t_n})$ along algebraic maps \cite[4.3.4]{fresnel-vdp}, the rigid space $\P_k^{n,\mathrm{an}}$ is (isomorphic to) the generic fibre of the formal completion $(\P_{k^\circ}^n)^\wedge$ of the $k^\circ$-scheme $\P_{k^\circ}^n$; this argument also holds for closed subspaces. Hence every projective rigid $k$-space has an algebraic model.
\end{enumerate}
\end{example}

\begin{lemma} \label{algebraic_formal_blow-up--lemma}
Let $X$ be an $R$-scheme locally of finite type. Assume that $R$ is of type (N) or that $X$ is without $I$-torsion (\eg{} flat over $R$). Then every admissible formal blow-up of $\hat{X}$ is algebraic.
\end{lemma}
\begin{proof}
If $R$ is of type (N), then $R\skp{t_1,\ldots,t_n}$ is noetherian \cite[\S 7.3~ Rem.~1]{bosch} so that $\hat{X}$ is locally of topologically finite presentation. If $X$ is without $I$-torsion, then $\hat{X}$ is locally of topologically finite presentation \cite[\S 7.3, Cor.~5]{bosch}. Hence in both cases the notion of an admissible formal blow-up \cite[\S 8.2, Def.~3]{bosch} is defined. Set $X/I:=X\times_{\Spec(R)}\Spec(R/I)$ and let $\Ical$ be the ideal sheaf of $\O_X$ defining $X/I$.
Let $\Xcal'\to\hat{X}$  be an admissible formal blow-up defined by an open ideal $\Acal$ of $\O_{\hat{X}}$. In particular, there exists an $n\in\N$ such that $\Ical^n\O_{\hat{X}}\subset\Acal$. Let $Z_n := X/I^n$ be the closed subscheme of $X$ defined by $\Ical^n$. This yields a surjective map $\phi=i^\#\colon\O_X\to i_*\O_{Z_n}$ of sheaves on $X$ where $i \colon  Z_n\to X$ denotes the inclusion. Let $\tilde \Acal := \phi\inv\bigl( \Acal/(\Ical^n\O_{\hat{X}})\bigr)$. By construction, $i\inv\tilde\Acal=\Acal$ since both have the same pullback to $Z_k = (Z,\O_X/\Ical^k) = (Z,\O_{\hat{X}}/\Ical^k)$. Thus $\Xcal = \hat{X}_{\tilde\Acal}$.
\end{proof}

\begin{lemma} \label{Spf_basis--lem}
For every $R$-algebra $A_0$, the family  $\bigl(\Spf(A_0\skp{f\inv}\bigr)_{f\in \Anod}$ is a basis of the topology of $\Spf(A_0)$.
\end{lemma}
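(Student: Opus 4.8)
The plan is to reduce the statement to the standard description of $\Spf(A_0)$ together with a levelwise computation of the structure sheaf. Fix an ideal of definition $I\subseteq A_0$. Recall (\cf{} \cite[pt.~II]{bosch}) that the underlying topological space of $\Spf(A_0)$ is canonically homeomorphic to $\Spec(A_0/I)$, independently of the choice of $I$; under this identification the basic open subsets $D(\bar g)$ for $\bar g\in A_0/I$ form a basis of the topology, and since the reduction map $A_0\to A_0/I$ is surjective, each such $\bar g$ is the image of some $f\in A_0$. Hence the open subsets of $\Spf(A_0)$ corresponding to the $D(\bar f)$ with $f$ ranging over $A_0$ already form a basis of the topology, and it only remains to identify the associated open formal subscheme.

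For this I would use that $\O_{\Spf(A_0)}$ is the limit $\lim_n$ of the structure sheaves of the schemes $\Spec(A_0/I^n)$ (which all share the space $\Spec(A_0/I)$). Over $D(\bar f)$ the $n$-th term is $(A_0/I^n)_{\bar f}$, and since localisation is exact this is $A_0[f\inv]/I^nA_0[f\inv]$; passing to the limit over $n$ gives $\lim_n A_0[f\inv]/I^nA_0[f\inv] = A_0\skp{f\inv}$, the $I$-adic completion of $A_0[f\inv]$. Comparing underlying spaces ($\Spec(A_0\skp{f\inv}/I) = D(\bar f)$) and structure sheaves on basic opens, the open formal subscheme of $\Spf(A_0)$ supported on $D(\bar f)$ is exactly $\Spf(A_0\skp{f\inv})$, which proves the claim.

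The one point requiring care — and the only real obstacle — is that $A_0\skp{f\inv}$ is again an adic ring of the admissible type under consideration, so that $\Spf(A_0\skp{f\inv})$ is defined: for type (N) this follows because localisation and $I$-adic completion preserve noetherianness and the absence of $I$-torsion is inherited, while for type (V) the localisation of an adic valuation ring at an element is again a valuation ring (or a field) whose completion is of the same type, \cf{} \cite[\S 7.3]{bosch}. Everything else is formal, so with this in hand the argument above goes through.
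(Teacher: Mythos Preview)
Your argument is correct and follows essentially the same route as the paper: both identify $\Spf(A_0)$ with the closed subset $V(I)\cong\Spec(A_0/I)$ of $\Spec(A_0)$, pull back the standard basis $D(f)$, and then identify $D(f)\cap\Spf(A_0)$ with $\Spf(A_0\skp{f\inv})$. The paper's proof stops at the topological identification, whereas you additionally compute the structure sheaf level by level and discuss whether $A_0\skp{f\inv}$ is again of type (V) or (N); since the lemma is a statement about the underlying topology only, both of these extra steps are harmless but unnecessary here (and note that in context $A_0$ is an admissible $R$-algebra rather than $R$ itself, so the type discussion would have to be phrased accordingly if it were needed).
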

\begin{proof}
The family $\bigl((\Spec(A_0[f\inv])\bigr)_{f\in A_0}$ forms a basis of the topology of $\Spec(A_0)$. Topologically, $\Spf(A_0)$ is a closed subspace of $\Spec(A_0)$. Thus the induced family $\bigl(\Spec(A_0[f\inv]\cap\Spf(A_0)\bigr)_{f\in A_0}$ is a basis of the topology of $\Spf(A_0)$. As topological spaces,  $\Spf(A_0\skp{f\inv}) = \Spec(A_0[f\inv]) \cap \Spf(A_0)$. Hence we are done.
\end{proof}

\begin{lemma} \label{quasi-affine_formal_scheme_algebraic--lemma}
Every admissible formal blow-up of a quasi-affine admissible formal scheme is algebraic.
\end{lemma}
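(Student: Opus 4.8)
The plan is to reduce the statement to Lemma~\ref{algebraic_formal_blow-up--lemma} by realising the given formal scheme as the formal completion of a quasi-affine $R$-scheme. Let $\Xcal$ be a quasi-affine formal scheme, so that $\Xcal$ is isomorphic to an open formal subscheme of an affine formal scheme $\Spf(A_0)$ for some adic $R$-algebra $A_0$ with ideal of definition $I$; in particular $\Spec(A_0)$ is an $R$-scheme. The first, and main, step is to produce an open subscheme $W\subseteq\Spec(A_0)$ together with an isomorphism $\Xcal\cong\hat W$ over $\Spf(R)$, in the notation of Definition~\ref{algebraic-formal--def}.

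To carry this out I would argue as follows. The underlying topological space of $\Spf(A_0)$ is the closed subspace $V(I)\subseteq\Spec(A_0)$. By Lemma~\ref{Spf_basis--lem} the basic opens $\Spf(A_0\skp{f\inv})$, $f\in A_0$, form a basis of the topology of $\Spf(A_0)$, so $\Xcal$ is the union of those basic opens it contains; let $W:=\bigcup\Spec(A_0[f\inv])\subseteq\Spec(A_0)$, the union running over all $f\in A_0$ with $\Spf(A_0\skp{f\inv})\subseteq\Xcal$ (one may restrict to a finite subfamily if $\Xcal$ is quasi-compact). Then $W$ is open in $\Spec(A_0)$ and $V(I)\cap W$ is the space underlying $\Xcal$. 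On each basic piece the structure sheaves agree, since $A_0\skp{f\inv}$ is by definition the $I$-adic completion of $A_0[f\inv]$, which is exactly the ring of sections of $\O_{\hat W}$ over $\Spec(A_0[f\inv])$, formal completion being compatible with restriction to opens. Gluing over this cover yields $\Xcal\cong\hat W$; in particular $\Xcal$ is algebraic, which also proves the claim made just after Definition~\ref{algebraic-formal--def}.

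It then remains to note that $W$, being open in the $R$-scheme $\Spec(A_0)$, is itself a quasi-affine $R$-scheme, and that the isomorphism $\Xcal\cong\hat W$ transports the situation of Definition~\ref{algebraic-formal--def} for $X=W$ (with $U=W\setminus W/I$) to $\Xcal$. Hence Lemma~\ref{algebraic_formal_blow-up--lemma}, applied to the $R$-scheme $X=W$, shows that every admissible formal blow-up of $\hat W\cong\Xcal$ is algebraic, which is the assertion. I expect essentially all of the work to sit in the identification $\Xcal\cong\hat W$, and more precisely in checking that the open formal subscheme structure on $\Xcal$ is the completion structure sheaf of $W$ — which is exactly where Lemma~\ref{Spf_basis--lem} does its job — after which the conclusion is a direct application of Lemma~\ref{algebraic_formal_blow-up--lemma}.
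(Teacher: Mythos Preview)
Your proof is correct. It differs from the paper's argument in the order of operations: you first algebraize the base, exhibiting $\Xcal\cong\hat W$ for an open $W\subseteq\Spec(A_0)$ built via Lemma~\ref{Spf_basis--lem}, and then apply Lemma~\ref{algebraic_formal_blow-up--lemma} directly to $X=W$. The paper instead starts from a given admissible formal blow-up $\Ucal'\to\Ucal\subseteq\Spf(A_0)$, extends its defining coherent open ideal to one on all of $\Spf(A_0)$ (invoking \cite[\S 8, Prop.~13]{bosch}), applies Lemma~\ref{algebraic_formal_blow-up--lemma} to the ambient affine $\Spec(A_0)$ to obtain a scheme-theoretic blow-up $p\colon X'\to\Spec(A_0)$, and only at the end restricts over the open $U=\bigcup_i\Spec(A_0[f_i^{-1}])$ (constructed, as in your argument, via Lemma~\ref{Spf_basis--lem}) to identify $\Ucal'\cong\widehat{p^{-1}(U)}$. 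Your route is slightly more economical --- it bypasses the ideal-extension step and the external reference, and it isolates the useful auxiliary statement that every quasi-affine formal scheme is already algebraic --- while the paper's route has the minor advantage of realising the algebraic model of $\Ucal'$ as an open in a blow-up of the ambient affine scheme, which is convenient when one later wants compatibility with that ambient model.
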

\begin{proof}
Let $j \colon \Ucal \inj \Xcal=\Spf(\Anod)$ be the inclusion of an open formal subscheme. and let $\Ucal' \to \Ucal$ be an admissible formal blow-up  defined by a coherent open ideal $\Acal_U\subseteq\O_\Ucal$.
Then there exists a coherent open ideal $\Acal\subseteq\O_\Xcal$ such that $\Acal|_U\cong\Acal_U$ and $\Acal|_V\cong \O_V$ whenever $V\cap U = \emptyset$ \cite[\S 8.2, Prop.~13]{bosch}. In particular, $\Ucal' \to \Ucal$ extends to an admissible formal blow-up $\Xcal' \to \Xcal$. 
By Lemma~\ref{algebraic_formal_blow-up--lemma}, this blow-up comes from an admissible blow-up $p \colon X' \to X=\Spec(A_0)$. 
By Lemma~\ref{Spf_basis--lem}, we can write $\Ucal = \bigcup_{i=1}^n \Ucal_i$ with $\Ucal_i = \Spf(A_0\skp{f_i\inv})$ for suitable $f_1,\ldots,f_n\in A_0$. Setting $U_i := \Spec(A_0[f_i\inv])$ and $U'_i := p\inv(U_i)$ and $U':= \bigcup_{i=1}^n U'_i$ the union in $X'$, then we obtain that 	\[
	\Ucal' = \bigcup_{i=1}^n \hat{U}_i = \hat{U'}
	\]
which finishes the proof.
\end{proof}

\begin{defin}
For a formal scheme $\Xcal$ locally of topologically finite presentation over $R$ its associated \df{formal Zariski-Riemann space} is defined to be the limit
	\[
	\skp{\Xcal} := \lim_{\Xcal'\in\Adm(\Xcal)} \Xcal_\Acal
	\]
in the category of locally topologically ringed spaces where $\Adm(\Xcal)$ denotes the category of all admissible formal blow-ups of $\Xcal$.
\end{defin}

\begin{lemma} \label{formal-algebraic-ZR--lemma}
Assume that the ideal ideal $I$ is principal, say generated by $\pi$. Let $X$ be an $R$-scheme locally of finite type. Assume that $R$ is of type (N) or that $X$ is without $\pi$-torsion (\eg{} flat over $R$). Then its formal completion $\hat{X}$ is homeomorphic to the special fibre $X/\pi = X\times_{\Spec(R)}\Spec(R/\pi)$. Consequently, the formal Zariski-Riemann space $\skp{\hat{X}}$ is homeomorphic to $\skp{X}_U/\pi = \skp{X}_U\setminus U$ where $U=\Spec(R[\pi\inv])$.
\end{lemma}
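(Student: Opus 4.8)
The plan is to deduce the statement from the standard fact that passing to a formal completion does not change the underlying topological space, combined with Lemma~\ref{algebraic_formal_blow-up--lemma}, which will let me identify the two cofiltered diagrams of locally ringed spaces whose limits are the two Zariski--Riemann spaces in question.

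First I would prove the pointwise ($n$-level) assertion. Since $I=(\pi)$, the scheme $X/\pi^{n}=X\times_{\Spec(R)}\Spec(R/\pi^{n})$ is the closed subscheme $V(\pi^{n}\O_{X})$ of $X$, and as $\sqrt{(\pi^{n})}=\sqrt{(\pi)}$ all of these closed subschemes have the same underlying space, namely $|X/\pi|$, the transition maps of the system $(X/\pi^{n})_{n}$ being the identity on underlying spaces. Because $\hat{X}=\lim_{n}X/\pi^{n}$ in locally ringed spaces, Proposition~\ref{limit_spaces--prop}~(i) identifies $|\hat{X}|$ with $\lim_{n}|X/\pi^{n}|=|X/\pi|$. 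This argument applies verbatim to any $R$-scheme, in particular to every object $X'\to X$ of $\Mdf(X,U)$, where I take $U:=X\setminus X/\pi$ to be the open subscheme of $X$ on which $\pi$ is invertible (this is the $U$ of Definition~\ref{algebraic-formal--def}; since everything here is topological and $\skp{X}_{U}$ depends only on $X\red$ up to homeomorphism, the absence of a reducedness hypothesis is harmless).

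Next I would match the two indexing systems. By Definition~\ref{algebraic-formal--def}, formal completion $X'\mapsto\hat{X'}$ is a functor from $\Mdf(X,U)$ to the category $\Adm(\hat{X})$ of admissible formal blow-ups of $\hat{X}$, and Lemma~\ref{algebraic_formal_blow-up--lemma} says this functor is essentially surjective. Both source and target are cofiltered, and both are (equivalent to) posets, since between two $U$-modifications — or two admissible formal blow-ups — over the common base there is at most one morphism, being determined on the dense locus where $\pi$ is invertible. Hence essential surjectivity already makes the functor initial, i.e.\ cofinal for limits, so
	\[
	\skp{\hat{X}}=\lim_{\Xcal'\in\Adm(\hat{X})}\Xcal'=\lim_{X'\in\Mdf(X,U)}\hat{X'}
	\]
in locally ringed spaces. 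Passing to underlying spaces via Proposition~\ref{limit_spaces--prop}~(i) together with the first step yields $|\skp{\hat{X}}|=\lim_{X'}|\hat{X'}|=\lim_{X'}|X'/\pi|$.

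Finally I would identify $\lim_{X'}|X'/\pi|$ with $\skp{X}_{U}\setminus U$. Using $|\skp{X}_{U}|=\lim_{X'}|X'|$ and the description of stalks $\O_{\skp{X}_{U},x}=\colim_{X'}\O_{X',x_{X'}}$ from Proposition~\ref{limit_spaces--prop}~(iii), one checks that $\pi$ is a unit in $\O_{\skp{X}_{U},x}$ iff a relation $\pi u=1$ already holds at some finite stage, hence (by compatibility) at every stage, i.e.\ iff $x_{X'}\in U$ for all $X'$. Therefore $\skp{X}_{U}\setminus U$ is exactly the underlying space of the closed subscheme $\skp{X}_{U}/\pi$, which proves the stated equality $\skp{X}_{U}/\pi=\skp{X}_{U}\setminus U$, and it coincides with $\{(x_{X'})_{X'} : x_{X'}\notin U \text{ for all } X'\}=\lim_{X'}(|X'|\setminus U)=\lim_{X'}|X'/\pi|$. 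Combining with the previous display gives the homeomorphism $\skp{\hat{X}}\cong\skp{X}_{U}/\pi=\skp{X}_{U}\setminus U$. The one step that deserves genuine care is the identification $\skp{\hat{X}}=\lim_{X'}\hat{X'}$ — that is, the cofinality of formal completion between the two blow-up categories — which is precisely where Lemma~\ref{algebraic_formal_blow-up--lemma} does the work; the remainder is routine bookkeeping with limits of topological spaces and of local rings.
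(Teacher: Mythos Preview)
Your proof is correct and follows exactly the approach the paper indicates: the paper's proof is the single sentence ``This is a direct consequence of the definition of a formal scheme and Lemma~\ref{algebraic_formal_blow-up--lemma}'', and you have carefully unpacked precisely these two ingredients. One small imprecision worth noting: Definition~\ref{algebraic-formal--def} only asserts that $\hat{X'}\to\hat{X}$ is an admissible formal blow-up when $X'\to X$ is a $U$-admissible \emph{blow-up}, not an arbitrary $U$-modification, so the completion functor should really be taken on $\BL(X,U)$ rather than on all of $\Mdf(X,U)$ --- but since $\BL(X,U)\hookrightarrow\Mdf(X,U)$ is cofinal this changes nothing in your argument.
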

\begin{proof}
This is a direct consequence of the definition of a formal scheme \cite[\S 7.2]{bosch} and Lemma~\ref{algebraic_formal_blow-up--lemma}.
\end{proof}

\begin{thm}[{\cite[2.22]{scholze-perfectoid}}] \label{adic_space_is_ZR--thm}
Let $k$ be a complete \na{} field, \ie{} a topological field whose topology is induced by a \na{} norm, and let $k^\circ$ be its valuation ring.
Let $X\ad$ be a quasi-compact and quasi-separated adic space locally of finite type over $k$. Then there exists an admissible formal model $\Xcal$ of $X\ad$ and there is a homeomorphism $X\ad \to[\cong] \skp{\Xcal}$ which extends to an isomorphism
	\[
	(X\ad,\O_{X\ad}^+) \To[\cong] \lim_{\Xcal'\in\Adm(\Xcal)} (\Xcal',\O_{\Xcal'})
	\]
of locally ringed spaces.
\end{thm}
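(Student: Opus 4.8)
This is a theorem of Scholze \cite[2.22]{scholze-perfectoid}; let us outline how one would argue. The plan is to produce a formal model by Raynaud's theorem, to realise the comparison morphism as the limit of the reduction maps to the formal models, and finally to compare the two structure sheaves on stalks. The existence of $\Xcal$ is Raynaud's theorem (see \cite{bosch}): every quasi-compact and quasi-separated adic space locally of finite type over $k$ is the adic generic fibre of an admissible formal $k^\circ$-scheme, and the category $\Adm(\Xcal)$ is cofiltered with all of its objects having the same generic fibre. Since admissible formal blow-ups are cofinal in $\Adm(\Xcal)$, replacing $\Xcal$ by one changes neither $\skp{\Xcal}$ nor the limit on the right, so the choice of $\Xcal$ is immaterial.

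Next one constructs the map. For each $\Xcal'\in\Adm(\Xcal)$ there is the reduction map $\mathrm{sp}_{\Xcal'}\colon X\ad\to\Xcal'$; over an affinoid piece $\Spa(A,A^\circ)$ with affine formal model $\Spf(A_0)$ it sends a continuous valuation $v$ with $v(A^\circ)\leq 1$ to the open prime $\{a\in A_0\st v(a)<1\}$ of $A_0$. These maps are continuous and compatible with the transition morphisms, so by Proposition~\ref{limit_spaces--prop}(i) they assemble to a continuous map $\varphi\colon X\ad\to\skp{\Xcal}$. Power-bounded functions on a formal open pull back along $\mathrm{sp}_{\Xcal'}$ to functions of absolute value $\leq 1$, so the maps $\mathrm{sp}_{\Xcal'}\inv\O_{\Xcal'}\to\O^+_{X\ad}$ pass to the colimit and, via Proposition~\ref{limit_spaces--prop}(ii), upgrade $\varphi$ to a morphism of locally ringed spaces $(X\ad,\O^+_{X\ad})\to\lim_{\Xcal'}(\Xcal',\O_{\Xcal'})$.

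That $\varphi$ is a homeomorphism and the sheaf map an isomorphism is checked locally on an affinoid $\Spa(A,A^\circ)$, where both sides are coherent sober spaces (the target by Proposition~\ref{limit-top-spaces--prop}(ii), the source by Huber's theory \cite{huber94}). By Proposition~\ref{limit-top-spaces--prop}(i) every quasi-compact open of $\skp{\Xcal}$ is pulled back from a formal open of some $\Xcal'$, whose preimage under $\mathrm{sp}_{\Xcal'}$ is a rational subset of $\Spa(A,A^\circ)$, while conversely $\{|f_1|,\ldots,|f_r|\leq|g|\neq 0\}$ is the preimage of a standard open of the admissible blow-up of the ideal $(f_1,\ldots,f_r,g)$; hence $\varphi$ is open. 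For points and stalks one uses the Zariski--Riemann dictionary: a point of $\skp{\Xcal}$ over $\Spa(A,A^\circ)$ is a compatible system of centres $(x_{A_0})_{A_0}$ whose structure-sheaf stalk, by Proposition~\ref{limit_spaces--prop}(iii), is $\colim_{A_0}\O_{\Spf(A_0),x_{A_0}}$; this filtered colimit of local rings is a valuation ring of $A$, the associated continuous valuation has centre $x_{A_0}$ on every $A_0$, and conversely a continuous valuation with $v(A^\circ)\leq 1$ recovers its system of centres, the two assignments being mutually inverse. The same colimit is, by the construction of adic spaces, the stalk $\O^+_{X\ad,x}$, namely the filtered colimit over rational neighbourhoods of $x$ of the subrings of functions of absolute value $\leq 1$; the identification is compatible with restriction and matches maximal ideals ($\{v<1\}$ on both sides), so $\varphi$ underlies the desired isomorphism of locally ringed spaces.

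The main obstacle is this Zariski--Riemann input: that compatible systems of centres on the admissible formal models correspond bijectively to continuous valuations, equivalently that $\O^+_{X\ad}$ has stalks computed by colimits of the local rings of the models. This is where one genuinely uses that admissible blow-ups separate valuations --- the valuative criterion of properness together with flattening by blow-ups \`a la Raynaud--Gruson \cite{raynaud-gruson} --- and it is the technical heart of the statement; the remaining steps are bookkeeping with coherent sober spaces and colimits of sheaves from Section~\ref{sec-limits-lrs}. For the full details we refer to Scholze \cite[2.22]{scholze-perfectoid} and, for the general formalism of formal Zariski--Riemann spaces, to Fujiwara--Kato \cite{fuji-kato}.
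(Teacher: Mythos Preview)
The paper does not supply a proof of this theorem at all: it is stated as a citation of Scholze \cite[2.22]{scholze-perfectoid} and used as a black box. So there is nothing to compare your argument against in the paper itself.

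That said, your outline is a reasonable sketch of how the cited result is established. A couple of minor remarks. First, the invocation of Raynaud--Gruson platification is superfluous here; the bijection between points of $\skp{\Xcal}$ and continuous valuations needs only the valuative criterion of properness (to get unique centres on each model) together with the fact that admissible blow-ups at finitely generated open ideals are cofinal, not any flattening statement. Second, the claim that the stalk $\colim_{A_0}\O_{\Spf(A_0),x_{A_0}}$ is already a valuation ring requires an argument (one shows that for any $f\in A$ either $f$ or $f^{-1}$ lies in some $A_0$ after a suitable admissible blow-up); you gesture at this but it is exactly the ``technical heart'' you mention, and it does not follow from anything in Section~\ref{sec-limits-lrs}. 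With those caveats, your sketch matches the standard proof and is correct in outline.
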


	\section{Main result: affinoid case}
	\label{sec-proof-main-result}

\begin{notation*}
In this section let $k$ be a complete discretely valued field with valuation ring $k^\circ$ and uniformiser $\pi$. This implies that the ring $k^\circ$ is noetherian.
\end{notation*}

\begin{thm} \label{main-thm}
Let $A$ be an affinoid $k$-algebra of dimension $d$. Then there is an isomorphism
	\[
	\Kcont_{-d}(A) \cong \Ho^d\bigl(\Spa(A,A^\circ);\Z\bigr)
	\]
where $\Spa(A,A^\circ)$ is the adic spectrum of $A$ with respect to its subring $A^\circ$ of power-bounded elements \cite[\S 3]{huber93} and the right-hand side is sheaf cohomology.
\end{thm}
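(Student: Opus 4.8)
The plan is to identify $\Kcont_{-d}(A)$ successively with the $K$-theory of the special fibre of a Zariski--Riemann space, with the rh-cohomology of that space, with its Zariski cohomology, and finally with the cohomology of $\Spa(A,A^\circ)$. First I would reduce to the case that $A$ is reduced and $d\geq 1$. Replacing $A$ by $A\red$ changes neither side: the underlying topological space of $\Spa(A,A^\circ)$ depends only on the reduction, since continuous valuations annihilate nilpotents; and $\Kcont_{-d}(A)$ is unchanged because in the pushout of Definition~\ref{defin-continuous_k-theory} the terms $\K(A^\circ)$ and $\K(A)$ are altered only by nilpotent extensions, on which negative algebraic $K$-theory is invariant (\cf{} the discussion following Remark~\ref{cont-k-theory--rem} and Lemma~\ref{K_-d_is_nil-invariant--lemma}), while the continuous terms are invariant in degree $-d$ since the relevant reductions have dimension $\leq d$. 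The case $d=0$ is elementary, as then $A$ is a finite product of finite field extensions of $k$ and both $\Kcont_0(A)=\K_0(A)$ and $\Ho^0(\Spa(A,A^\circ);\Z)$ are free abelian of rank the number of factors. For reduced $A$ the subring $A^\circ$ is reduced and $\pi$-torsion-free, so $U:=\Spec(A)$ is a dense open subscheme of $X:=\Spec(A^\circ)$; we are then in the situation of Definition~\ref{admissible-blow-up--defin}, and I write $\skp{A^\circ}_A/\pi$ for the closed complement $\skp{A^\circ}_A\setminus U$, a locally ringed space of Krull dimension $\leq d$ (since $\dim A^\circ=d+1$ and $\pi$ is a non-zero-divisor there).

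The second step produces an isomorphism $\Kcont_{-d}(A)\cong\K_{-d}(\skp{A^\circ}_A/\pi)$, where $\K(\skp{A^\circ}_A/\pi):=\colim_{X'\in\Adm(A^\circ)}\K(X'/\pi)$ in the spirit of Definition~\ref{k-theory-zr--def}. For each admissible blow-up $X'$, Proposition~\ref{Kcont-model_fibre_seq--prop} furnishes a weak fibre sequence $\K(X'\on\pi)\to\Kcont(X')\to\Kcont(A)$ of pro-spectra. Since filtered colimits in $\Pro(\Sp)$ are exact and commute with $\K$ and with finite limits, passing to the colimit over $\Adm(A^\circ)$ yields a weak fibre sequence $\K(\skp{A^\circ}_A\on\pi)\to\colim_{X'}\Kcont(X')\to\Kcont(A)$, whose left-hand term vanishes in all negative degrees by Theorem~\ref{ZR_negative_K_support_vanishing--thm}; as $d\geq 1$, the associated long exact sequence gives $\colim_{X'}\Kcont_{-d}(X')\cong\Kcont_{-d}(A)$. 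On the other hand $\Kcont(X')=\prolim_n\K(X'/\pi^n)$, and as each thickening $X'/\pi^n$ has dimension $\leq d$, nil-invariance of negative $K$-theory in the bottom degree makes the transition maps of the pro-group $\prolim_n\K_{-d}(X'/\pi^n)$ isomorphisms, so $\Kcont_{-d}(X')\cong\K_{-d}(X'/\pi)$. Taking the colimit over $X'$ combines these into the desired isomorphism.

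The third step identifies $\K_{-d}(\skp{A^\circ}_A/\pi)$ with $\Ho^d(\Spa(A,A^\circ);\Z)$. By Theorem~\ref{K_-d=Hrh^d--thm} one has $\K_{-d}(\skp{A^\circ}_A/\pi)\cong\Hrh^d(\skp{A^\circ}_A/\pi;\Z)$; its proof runs via the descent spectral sequence $E_2^{p,q}=\Hrh^p(\skp{A^\circ}_A/\pi;\underline{\pi}_q\KH)\Rightarrow\KH_{q-p}(\skp{A^\circ}_A/\pi)$ for the rh-sheaf $\KH\simeq\Lrh\K$ (Theorem~\ref{Lrh_K=KH--thm}): it is concentrated in $0\leq p\leq d$ because the rh-cohomological dimension is bounded by the Krull dimension, its rows with $q<0$ vanish because the negative homotopy sheaves of $\KH$ are rh-locally trivial, and its corner term is $\Hrh^d(\skp{A^\circ}_A/\pi;\underline{\pi}_0\KH)=\Hrh^d(\skp{A^\circ}_A/\pi;\Z)$; together with $\KH_{-d}\cong\K_{-d}$ in this degree (a consequence of \cite{kst}) this yields the claimed isomorphism. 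Next, Theorem~\ref{Hzar=Hrh--thm}, applied to the constant sheaf $\Z$ and using that $U$ is dense in $X$, gives $\Hrh^d(\skp{A^\circ}_A/\pi;\Z)\cong\Hzar^d(\skp{A^\circ}_A/\pi;\Z)$, which by Proposition~\ref{cohomology-zr--prop} together with Proposition~\ref{limit_spaces--prop}~(vi) coincides with the sheaf cohomology of the underlying topological space of $\skp{A^\circ}_A/\pi$. Finally, since $A$ is reduced, $A^\circ$ is a $\pi$-adically complete ring topologically of finite type over $k^\circ$, so $\Spf(A^\circ)$ is an algebraic formal scheme; Theorem~\ref{adic_space_is_ZR--thm} identifies $\Spa(A,A^\circ)$ with $\skp{\Spf(A^\circ)}$ as topological spaces, and Lemma~\ref{formal-algebraic-ZR--lemma} identifies $\skp{\Spf(A^\circ)}$ with $\skp{\Spec(A^\circ)}_{\Spec(A)}/\pi=\skp{A^\circ}_A/\pi$. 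Concatenating the three steps proves the theorem.

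The conceptual heart of the strategy — and the step without which it collapses — is Theorem~\ref{Hzar=Hrh--thm}: it is precisely what licenses trading the $K$-theoretically natural rh-topology for the Zariski topology that carries the geometric cohomology of the adic space. Within the present argument the most delicate point is the passage from $K$-theory to rh-cohomology in the third step: one must transport the equivalence $\Lrh\K\simeq\KH$ through a descent spectral sequence over the \emph{non-noetherian} Zariski--Riemann space $\skp{A^\circ}_A/\pi$, controlling both the rh-local vanishing of the negative homotopy sheaves of $\KH$ and the rh-cohomological dimension so that only the corner term survives. The manipulations with pro-spectra and filtered colimits in the second step, though routine, also need to be handled with care.
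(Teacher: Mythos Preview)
Your proposal is correct and follows essentially the same route as the paper: reduce to the reduced case, use the fibre sequence from Proposition~\ref{Kcont-model_fibre_seq--prop} and Kerz' vanishing (Theorem~\ref{ZR_negative_K_support_vanishing--thm}) to identify $\Kcont_{-d}(A)$ with $\colim_{X'}\K_{-d}(X'/\pi)$, then pass through rh-cohomology to Zariski cohomology via Theorem~\ref{Hzar=Hrh--thm}, and finally invoke Lemma~\ref{formal-algebraic-ZR--lemma} and Theorem~\ref{adic_space_is_ZR--thm}. Your explicit treatment of $d=0$ is a nice touch the paper leaves implicit.

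The one place you diverge is exactly the one you flag as delicate, and there the paper takes a simpler path that removes the delicacy entirely. You attempt to run the descent spectral sequence for $\KH\simeq\Lrh\K$ directly on the non-noetherian object $\skp{A^\circ}_A/\pi$, which forces you to worry about rh-cohomological dimension bounds and sheaf conditions on the Zariski--Riemann site. The paper instead applies Theorem~\ref{K_-d=Hrh^d--thm} \emph{levelwise}: each $X'/\pi$ is a noetherian scheme of dimension $d$ (for reduced $X'$, and these are cofinal by Lemma~\ref{ZR-reduced-lem}), so $\K_{-d}(X'/\pi)\cong\Hrh^d(X'/\pi;\Z)$ holds on the nose, and one then passes to the colimit using Corollary~\ref{Hzar=Hrh-colimit_cor}. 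Since by definition $\K_{-d}(\skp{A^\circ}_A/\pi)=\colim_{X'}\K_{-d}(X'/\pi)$ and, by Proposition~\ref{cohomology-zr--prop}, $\Hrh^d(\skp{A^\circ}_A/\pi;\Z)=\colim_{X'}\Hrh^d(X'/\pi;\Z)$, your asserted isomorphism is immediate from the levelwise one --- no spectral sequence on the limit space is needed. This is not a gap in your argument so much as an unnecessary detour: replacing your spectral-sequence paragraph with ``apply Theorem~\ref{K_-d=Hrh^d--thm} to each $X'/\pi$ and take the colimit'' yields the paper's proof verbatim.
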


Before proving the result, we first deduce an immediate consequence.

\begin{cor} \label{main-thm-cor}
Let $A$ be an affinoid $k$-algebra of dimension $d$. Then there is an isomorphism
	\[
	\Kcont_{-d}(A) \cong \Ho^d\bigl(\Spb(A);\Z\bigr)
	\]
where $\Spb(A)$ is the Berkovich spectrum of $A$ \cite[Ch.~1]{berkovich90} and the right-hand side is sheaf cohomology.
\end{cor}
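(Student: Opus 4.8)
The plan is to bootstrap from Theorem~\ref{main-thm}, which already gives $\Kcont_{-d}(A)\cong\Ho^d(\Spa(A,A^\circ);\Z)$, and then to compare the sheaf cohomology of the adic spectrum with that of the Berkovich spectrum. Recall the classical relationship between the two spaces: every continuous valuation $x\in\Spa(A,A^\circ)$ admits a unique maximal generization $x^{\flat}$, which has rank $\leq 1$, and $r\colon x\mapsto x^{\flat}$ is a continuous retraction of $\Spa(A,A^\circ)$ onto the subspace of rank-$\leq 1$ points; since here $A^+=A^\circ$, this subspace is canonically homeomorphic to $\Spb(A)$, so that $\Spb(A)$ is the maximal Hausdorff quotient of $\Spa(A,A^\circ)$ \cite[\S 3]{huber93}. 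Write $\iota\colon\Spb(A)\hookrightarrow\Spa(A,A^\circ)$ for the induced inclusion, so that $r\circ\iota=\id$.

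First I would check that $r_*\underline{\Z}=\underline{\Z}$ and $\mathrm{R}^qr_*\underline{\Z}=0$ for $q>0$. The map $r$ is spectral, and since $\Spb(A)$ is Hausdorff and quasi-compact subsets of a Hausdorff space are closed, $r$ is closed, hence proper; so these direct images may be computed fibrewise by proper base change for spectral spaces. It therefore suffices to see that each fibre $r^{-1}(y)$ is cohomologically trivial for constant coefficients. Now $y$ is closed in the Hausdorff space $\Spb(A)$, so $r^{-1}(y)$ is a closed, hence spectral, subspace of $\Spa(A,A^\circ)$, consisting of the continuous valuations on $A$ whose rank-$1$ coarsening is $y$; it has $y$ as unique generic point, in the sense that $y$ lies in every non-empty open subset (if $z\in r^{-1}(y)$ lies in an open $W$, then $y$, being a generization of $z$, lies in $W$ as well). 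Consequently every non-empty open of $r^{-1}(y)$ is connected, the constant sheaf $\underline{\Z}$ is flasque on $r^{-1}(y)$, and therefore $\Ho^0(r^{-1}(y);\Z)=\Z$ while $\Ho^{>0}(r^{-1}(y);\Z)=0$. The Leray spectral sequence for $r$ then degenerates and yields $\Ho^*(\Spa(A,A^\circ);\Z)\cong\Ho^*(\Spb(A);\Z)$, compatibly with $\iota^*$ in view of $r\circ\iota=\id$.

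Combining this comparison with Theorem~\ref{main-thm} gives $\Kcont_{-d}(A)\cong\Ho^d(\Spa(A,A^\circ);\Z)\cong\Ho^d(\Spb(A);\Z)$, which is the assertion. The main obstacle is the cohomological comparison in the middle step: although the fibrewise vanishing is elementary once one identifies the fibres of $r$ as valuative spectral spaces with a unique generic point, passing from it to $\mathrm{R}r_*\underline{\Z}\cong\underline{\Z}$ requires proper base change in the non-noetherian spectral setting, and the cleanest route is probably to quote the comparison of adic and Berkovich cohomology for overconvergent (in particular constant) coefficients already present in Huber's work rather than to reprove it here.
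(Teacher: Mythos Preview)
Your proposal is correct and reaches the same conclusion as the paper, but by a more hands-on route. The paper simply invokes the equivalence of van~der~Put--Schneider between overconvergent sheaves on $\Spa(A,A^\circ)$ and sheaves on $\Spb(A)$, observes that the constant sheaf $\Z$ is overconvergent and admits a flasque resolution by overconvergent sheaves, and concludes. Your argument instead unwinds a special case of this comparison directly: you use the retraction $r$ onto rank-$\leq 1$ points, identify its fibres as spectral spaces with a unique generic point, deduce flasqueness of $\underline{\Z}$ there, and feed this into Leray via a proper base change for spectral spaces. This is a perfectly good alternative and makes the geometry explicit; the trade-off is that the base-change step in the non-Hausdorff spectral setting requires some care (as you yourself flag), whereas the paper's one-line citation absorbs all of that. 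Amusingly, your closing remark --- that ``the cleanest route is probably to quote the comparison of adic and Berkovich cohomology for overconvergent coefficients'' --- is exactly what the paper does, citing van~der~Put--Schneider rather than Huber.
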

\begin{proof}
The category of overconvergent\footnote{\Cf{} \cite[\S 4, p.~94]{put-schneider-points} for the definition of overconvergent sheaves.} sheaves on an adic spectrum is equivalent to the category of sheaves on the Berkovich spectrum \cite[\S 5, Thm.~6]{put-schneider-points}. The locally constant sheaf $\Z$ is overconvergent and admits a flasque resolution by overconvergent sheaves, hence the claim follows from Theorem~\ref{main-thm}.
\end{proof}

\begin{proof}[Proof of Theorem~\ref{main-thm}]
We may assume that $A$ is reduced as the statement is nilinvariant. Let $A^\circ$ be the subring of $A$ consisting of power-bounded elements of $A$. Then the pair $(A,A^\circ)$ is a Tate pair \cite[\S 6.2.4, Thm.~1]{bgr} and $A^\circ$ is noetherian \cite[\S 6.4.3, Prop.~3~(i)]{bgr}. For any $X\in\Adm(A^\circ)$ one has $X_A=\Spec(A)$ and thus by Proposition~\ref{Kcont-model_fibre_seq--prop} there is a fibre sequence
	\begin{align*}
		\K(X\on\pi) \To \Kcont(X) \To \Kcont(A).
	\end{align*}
Passing to the colimit over all admissible models we obtain a fibre sequence of pro-spectra
	\begin{align*}
	\colim_{X\in\Adm(A^\circ)}\K(X\on\pi) \To \colim_{X\in\Adm(A^\circ)}\Kcont(X) \To \Kcont(A).
	\end{align*}
For $i<0$ we have that $\colim_{X\in\Adm(A^\circ)}\K_i(X\on\pi)=0$ \cite[Prop.~7]{kerz-icm} and hence
	\begin{align*}
	\Kcont_i(A) \cong \colim_{X\in\Adm(A^\circ)}\Kcont_i(X).
	\end{align*}	  
Lemma~\ref{K_-d_is_nil-invariant--lemma} below and Theorem~\ref{K_-d=Hrh^d--thm} yield 
	\begin{align*}
	\colim_{X\in\Adm(A^\circ)}\Kcont_i(X)
	&\cong \colim_{X\in\Adm(A^\circ)}\K_{-d}(X/\pi)\\
	&\cong \colim_{X\in\Adm(A^\circ)}\K_{-d}(X/\pi) \\
	&\cong \colim_{X\in\Adm(A^\circ)}\Hrh^d(X/\pi;\Z)
	\end{align*}
where the last isomorphism uses that $d=\dim(X/\pi)$ if $X\in\Adm(A^\circ)$ is reduced.
Corollary~\ref{Hzar=Hrh-colimit_cor} says that
	\[
	\colim_{X\in\Adm(A^\circ)}\Hrh^*(X/\pi;\Z) \cong \colim_{X\in\Adm(A^\circ)}\Hzar^*(X/\pi;\Z).
	\]
The Zariski cohomology is just ordinary sheaf cohomology. The latter one commutes with colimits of coherent and sober spaces with quasi-compact transition maps \cite[ch.~0, 4.4.1]{fuji-kato}. Since the admissible Zariski-Riemann space is such a colimit we obtain 
	\[
	\colim_{X\in\Adm(A^\circ)}\Hzar^*(X/\pi;\Z) \cong \Ho^*(\skp{A^\circ}_A/\pi;\Z).
	\]
where the right-hand side is sheaf cohomology. Finally we get that
	\[
	\Ho^*\bigl(\skp{A^\circ}_A/\pi;\Z\bigr) \cong \Ho^*\bigl(\Spa(A,A^\circ);\Z\bigr).
	\]
since the admissible Zariski-Riemann space $\skp{A^\circ}_A$ is homeomorphic to the formal Zariski-Riemann space $\skp{\Spf(A^\circ)}$ (Lemma~\ref{formal-algebraic-ZR--lemma}) which is isomorphic to the adic spectrum $\Spa(A,A^\circ)$ (Theorem~\ref{adic_space_is_ZR--thm}).
\end{proof}

\begin{lemma} \label{K_-d_is_nil-invariant--lemma}
Let $Y$ be a noetherian scheme of finite dimension $d$. Then for $n\geq d$ we have
	\begin{align*}
	 \K_{-n}(Y) \cong \K_{-n}(Y\red).
	\end{align*}
\end{lemma}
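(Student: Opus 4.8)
The plan is to deduce this from the solution of Weibel's conjecture by Kerz--Strunk--Tamme \cite{kst}, together with the observation that cdh-cohomology is insensitive to nilpotent thickenings. First I would record the harmless reduction: the closed immersion $i\colon Y\red\inj Y$ is a homeomorphism on underlying spaces, so $\dim Y = \dim Y\red = d$, and the claim is that the pullback map $i^*\colon\K_{-n}(Y)\to\K_{-n}(Y\red)$ is an isomorphism for every $n\ge d$. For $n>d$ both groups vanish by \cite{kst}: a noetherian scheme of Krull dimension $d$ has $\K_{-i}=0$ for $i>d$ (the affine case of which was Weibel's question). So the only case with content is $n=d$.

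For $n=d$ I would invoke the identification $\K_{-d}(X)\cong\Hcdh^d(X;\Z)$ of \cite{kst} (statement (iii) of the introduction), which is natural in the noetherian scheme $X$; it therefore suffices to show that $i^*\colon\Hcdh^d(Y;\Z)\to\Hcdh^d(Y\red;\Z)$ is an isomorphism. The point is that $i$ is a cdh-cover: since $Y$ is noetherian its nilradical is nilpotent, so $i$ is a nilpotent closed immersion, and the cartesian square with $X=Y$, $Z=Y\red$ and $\tilde X = E = \emptyset$ is an abstract blow-up square --- the condition ``$\tilde X\setminus E\to X\setminus Z$ is an isomorphism'' reads ``$\emptyset\to\emptyset$''. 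Hence $\{Y\red\to Y\}$ generates a cdh-covering sieve; as $i$ is a monomorphism its \Cech nerve is the constant simplicial scheme $Y\red$, so cdh-descent yields $R\Gamma_{\cdh}(Y;\Z)\iso R\Gamma_{\cdh}(Y\red;\Z)$, in particular an isomorphism on $\Ho^d$. Chasing the commutative square whose verticals are the Kerz--Strunk--Tamme isomorphisms and whose bottom map is the isomorphism just obtained shows that $i^*\colon\K_{-d}(Y)\to\K_{-d}(Y\red)$ is an isomorphism, which finishes the proof.

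As the substantial input is entirely borrowed from \cite{kst}, there is no real obstacle here; the two points that deserve a line of care are the naturality in $X$ of the comparison map $\K_{-d}(X)\to\Hcdh^d(X;\Z)$ (so that the conclusion concerns the \emph{pullback} map and not merely an abstract isomorphism of abelian groups), and the purely formal fact that reduction of a noetherian scheme is a cdh-cover. Equivalently one could argue through homotopy $K$-theory: $\KH=\Lcdh\K$ is a cdh-sheaf by \cite{kst}, hence $\KH(Y)\simeq\KH(Y\red)$ along the cdh-cover $i$, while $\K_{-n}\to\KH_{-n}$ is an isomorphism for $n\ge\dim$ by Weibel's conjecture --- the two formulations carry the same content.
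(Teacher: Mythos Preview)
Your argument is correct and coincides with the second of the two proofs the paper offers: the paper explicitly notes that ``cracking a nut with a sledgehammer'' one may invoke \cite[Thm.~B, Cor.~D]{kst} to get vanishing for $n>d$ and the identification $\K_{-d}\cong\Hcdh^d(-;\Z)$ for $n=d$, exactly as you do; your observation that $Y\red\to Y$ is a cdh-cover (via the abstract blow-up square with $\tilde X=\emptyset$) is the missing half-line the paper leaves implicit.

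The paper's \emph{primary} proof is different and more elementary: it uses the Zariski-descent spectral sequence together with the classical fact that negative algebraic K-theory of \emph{affine} schemes is nil-invariant (since $\K_0$ is nil-invariant and negative K-groups are defined from $\K_0$ via the Bass construction). This route avoids the heavy input from \cite{kst} entirely, at the cost of a spectral-sequence bookkeeping argument. Your approach is cleaner to write down once \cite{kst} is on the table, which is why the paper records it as well; the elementary route has the virtue of not depending on pro-cdh descent or Weibel's conjecture.
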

\begin{proof}
This follows by using the Zariski-descent spectral sequence and nilinvariance of negative algebraic K-theory for affine schemes. 
\end{proof}

	\section{Continous K-theory for rigid spaces}	
	\label{sec-global-continuous-k-theory}

In this section we see that continuous K-theory, as defined for algebras in Definition~\ref{defin-continuous_k-theory}, satisfies descent and hence defines a sheaf of pro-spectra for the admissible topology. The result and its proof are due to Morrow \cite{morrow}; we present here a slightly different argument. For the general theory on rigid $k$-spaces we refer the reader to Bosch's lecture notes \cite[pt.~I]{bosch}.

\begin{notation*}
In this section let $k$ be a complete discretely valued field with valuation ring $k^\circ$ and uniformiser $\pi$. This implies that the ring $k^\circ$ is noetherian. For an affinoid $k$-algebra $A$ denote by $\Spm(A)$ its associated affinoid $k$-space \cite[\S 3.2]{bosch}.\footnote{Bosch uses the notation $\mathrm{Sp}(A)$.}
Denote by $\FSch_{k^\circ}$ the category of formal schemes over $k^\circ$ and by $\FSch_{k^\circ}^\mathrm{lft}$ its full subcategory of formal schemes that are locally finite type over $k^\circ$; we consider these as sites equipped with the Zariski-topology.
\end{notation*}

\begin{lemma} \label{Kcont_formal_Zariski_descent--lemma}
Let $\Xcal$ be a formal scheme over $k^\circ$ which is assumed to be covered by two open formal subschemes $\Xcal_1$ and $\Xcal_2$. Setting $\Xcal_3 := \Xcal_1\cap\Xcal_2$ we obtain a cartesian square
	\carre{\Kcont(\Xcal)}{\Kcont(\Xcal_1)}{\Kcont(\Xcal_2)}{\Kcont(\Xcal_3)}
in the category $\Pro(\Sp)$.
\end{lemma}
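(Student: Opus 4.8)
The plan is to reduce the statement to the level of the nilpotent thickenings $\Xcal/\pi^n$, where everything becomes a Zariski-descent statement for algebraic $\K$-theory of honest schemes. First I would observe that since $\Xcal$ is covered by the two open formal subschemes $\Xcal_1$ and $\Xcal_2$ with intersection $\Xcal_3$, passing to the $n$-th infinitesimal neighbourhood of the special fibre yields, for every $n\geq 1$, a Zariski cover of the scheme $\Xcal/\pi^n := (\Xcal,\O_\Xcal/\pi^n\O_\Xcal)$ by the two opens $\Xcal_i/\pi^n$ with intersection $\Xcal_3/\pi^n$. Indeed the formal scheme and its $n$-th thickening have the same underlying topological space, so the open cover is literally the same on underlying spaces; only the structure sheaf changes.

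Next I would invoke Zariski descent (equivalently, the Thomason--Trobaugh Mayer--Vietoris property, cf.\ \cite{tt90}) for nonconnective algebraic $\K$-theory: for each fixed $n$ the square
	\carre{\K(\Xcal/\pi^n)}{\K(\Xcal_1/\pi^n)}{\K(\Xcal_2/\pi^n)}{\K(\Xcal_3/\pi^n)}
is a homotopy cartesian square of spectra. These squares are compatible as $n$ varies, since the transition maps $\Xcal/\pi^{n+1}\to\Xcal/\pi^n$ are morphisms of schemes over which the whole cover is functorial; hence they assemble into a cartesian square of $\N$-indexed diagrams of spectra, i.e.\ a square in $\Fun(\N\op,\Sp)$.

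Finally I would pass to the pro-category. By Reminder~\ref{pro-objects--reminder}, finite limits in $\Pro(\Sp)$ are computed level-wise, and the pro-object functor $\prolim_{n}\colon\Fun(\N\op,\Sp)\to\Pro(\Sp)$ preserves finite limits; since a square is cartesian in a stable $\infty$-category iff it is cocartesian, and $\Pro(\Sp)$ is stable, applying $\prolim_n$ to the level-wise cartesian square above produces the cartesian square
	\carre{\Kcont(\Xcal)}{\Kcont(\Xcal_1)}{\Kcont(\Xcal_2)}{\Kcont(\Xcal_3)}
in $\Pro(\Sp)$, which is the claim. The only point requiring a little care — and the step I expect to be the main (minor) obstacle — is checking that $\Xcal_i/\pi^n$ genuinely form a Zariski cover of $\Xcal/\pi^n$ in the scheme-theoretic sense, so that the Thomason--Trobaugh Mayer--Vietoris theorem applies; this is where one uses that an open formal subscheme has, by definition, underlying space an open subspace and structure sheaf the restriction, so that each $\Xcal_i/\pi^n\to\Xcal/\pi^n$ is an open immersion of schemes and the two together are jointly surjective. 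Everything else is formal manipulation with pro-spectra.
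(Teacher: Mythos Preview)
Your proof is correct and follows essentially the same approach as the paper: reduce to the thickenings $\Xcal/\pi^n$, apply Zariski descent for algebraic $\K$-theory levelwise, and conclude using that finite limits in $\Pro(\Sp)$ are computed levelwise. The paper's proof is more terse but identical in substance.
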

\begin{proof}
For every $n\geq 1$, the special fibre $\Xcal/\pi^n$ is covered by $\Xcal_1/\pi^n$ and $\Xcal_2/\pi^n$ with intersection $\Xcal_3/\pi^n$. Applying algebraic K-theory one obtains cartesian squares by Zariski descent. Now the claim follows as finite limits in the pro-category can be computed levelwise (Reminder~\ref{pro-objects--reminder}).
\end{proof}

\begin{cor} \label{Kcont_formal_sheaf--cor}
The presheaf $\Kcont$ on the site $\FSch_{k^\circ}$ is a sheaf of pro-spectra and satisfies $\Kcont(\Spf(A_0)) \simeq \Kcont(\Anod)$ for every $k^\circ$-algebra  $\Anod$.
\end{cor}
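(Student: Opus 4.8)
The second assertion is immediate once one unwinds the definition of $\Kcont$ on formal schemes: for a $\pi$-adic $k^\circ$-algebra $\Anod$ one has $\Spf(\Anod)/\pi^n = \Spec(\Anod/\pi^n\Anod)$, so that $\Kcont(\Spf(\Anod)) = \prolim_{n\geq 1}\K(\Anod/\pi^n\Anod) = \Kcont(\Anod)$, the continuous K-theory of the $\pi$-adic ring $\Anod$. Hence the only real content of the corollary is that the presheaf $\Kcont$ on $\FSch_{k^\circ}$ satisfies Zariski descent.

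The plan for this is the usual Mayer--Vietoris reduction. First I would reduce to finite covers: every open cover of a quasi-compact formal scheme admits a refinement by finitely many open formal subschemes, and the general case follows formally (for instance because the Zariski cd-structure on $\FSch_{k^\circ}$, whose distinguished squares are the open-cover squares associated with covers $\Xcal = \Xcal_1\cup\Xcal_2$ with $\Xcal_3:=\Xcal_1\cap\Xcal_2$, is complete, regular, and bounded, the last point thanks to $k^\circ$ being noetherian). For a finite cover $\Xcal=\Xcal_1\cup\dots\cup\Xcal_n$ I would then induct on $n$: the cases $n=0$ (the empty cover) and $n=1$ are clear; for the inductive step set $\Ycal:=\Xcal_2\cup\dots\cup\Xcal_n$, observe that $\Xcal_1\cap\Ycal$ is covered by the $n-1$ opens $\Xcal_1\cap\Xcal_i$, and combine the inductive hypothesis for $\Ycal$ and for $\Xcal_1\cap\Ycal$ with the two-term case applied to the cover $\{\Xcal_1,\Ycal\}$ of $\Xcal$. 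This reduces the whole statement to the empty cover and to covers by two opens.

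For the empty cover one has $\Kcont(\emptyset)=\prolim_n\K(\emptyset)\simeq 0$. For a cover by two opens $\Xcal_1,\Xcal_2$ with $\Xcal_3=\Xcal_1\cap\Xcal_2$, what must be shown is exactly that the square relating $\Kcont(\Xcal)$, $\Kcont(\Xcal_1)$, $\Kcont(\Xcal_2)$ and $\Kcont(\Xcal_3)$ is cartesian in $\Pro(\Sp)$, which is Lemma~\ref{Kcont_formal_Zariski_descent--lemma}. Accordingly there is no genuine obstacle here: all the substance sits in that lemma, and the statement is essentially bookkeeping. The one point that deserves (routine) care is that every limit occurring in the argument is a finite limit of pro-spectra, hence computed levelwise in $\Sp$ (Reminder~\ref{pro-objects--reminder}); this is precisely what allows the levelwise Zariski descent for nonconnective algebraic K-theory of the noetherian schemes $\Xcal_i/\pi^n$ to propagate to the pro-spectrum $\Kcont$.
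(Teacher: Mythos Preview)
Your proposal is correct and takes essentially the same approach as the paper: the paper's proof is the one-line citation ``This is a standard consequence for topologies which are induced by cd-structures \cite[Thm.~3.2.5]{ahw1}'', which is exactly the abstract packaging of the Mayer--Vietoris reduction to Lemma~\ref{Kcont_formal_Zariski_descent--lemma} that you spell out by hand (and which you also mention via the cd-structure). Your unwinding of the second assertion is likewise what the paper leaves implicit.
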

\begin{proof}
This is a standard consequence for topologies which are induced by cd-structures \cite[Thm. 3.2.5]{ahw1}.
\end{proof}

\begin{lemma}[{\cite[3.4]{morrow}}]
Let $\Spm(A)$ be an affinoid $k$-space which is assumed to be covered by two open affinoid subdomains $\Spm(A^1)$ and $\Spm(A^2)$. We set
	\[ 
	 A^3 := A^1 \hat{\otimes}_A A^2 := k \otimes_{k_0} (A_+^1 \otimes_{A_+} A_+^2)^\wedge
	 \]
where $A_0, A_+^1, A_+^2$ are respective subrings of definition of $A, A^1, A^2$ and $(A_+^1 \otimes_{A_0} A_+^2)^\wedge$ denotes the $\pi$-adic completion. Then the square
	\carretag[$\square$]{\Kcont(A)}{\Kcont(A^1)}{\Kcont(A^2)}{\Kcont(A^3)}
is weakly cartesian in $\Pro(\Sp)$, \ie{} cartesian in $\Pro(\Sp^+)$.
\end{lemma}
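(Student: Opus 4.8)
The plan is to reduce the assertion, by passing to formal models, to Zariski descent for continuous $\K$-theory on formal schemes (Lemma~\ref{Kcont_formal_Zariski_descent--lemma}), with the Kerz--Saito--Tamme fibre sequence (Proposition~\ref{Kcont-model_fibre_seq--prop}) serving as the bridge between the continuous $\K$-theory of a formal model and that of its rigid generic fibre.

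First I would choose a formal model. By Raynaud's theory of formal models (see \cite{bosch}) one finds an admissible blow-up $X\to\Spec(A^\circ)$ over $\Spec(A)$ --- so $X$ is noetherian and separated, and $\K(X\on\pi)\to\Kcont(X)\to\Kcont(A)$ is a weak fibre sequence by Proposition~\ref{Kcont-model_fibre_seq--prop} --- whose formal completion $\hat X$ is covered by two affine open formal subschemes $\hat X_1=\Spf(A_+^1)$ and $\hat X_2=\Spf(A_+^2)$ inducing the given covering, with each $A_+^i$ a noetherian ring of definition of $A^i$. Since $X$, hence $\hat X$, is separated, $\hat X_3:=\hat X_1\cap\hat X_2=\Spf(A_+^3)$ is again affine; its rigid generic fibre is $\Spm(A^1)\cap\Spm(A^2)=\Spm(A^3)$, so $A_+^3$ is a noetherian ring of definition of the ring $A^3$ of the statement, and by Remark~\ref{cont-k-theory--rem} only this ring (not the choice of $A_+^3$) matters for $\Kcont(A^3)$. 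If the two subdomains do not admit a simultaneous affine model on one blow-up, one first passes to a finite affine covering of a common blow-up and uses the standard fact that the Mayer--Vietoris property for two-element coverings implies \Cech{} descent for arbitrary finite coverings.

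Next I would assemble the two relevant cartesian squares. Arguing as in Lemma~\ref{Kcont_formal_Zariski_descent--lemma} and Corollary~\ref{Kcont_formal_sheaf--cor}, that is, applying Zariski descent for algebraic $\K$-theory degreewise to $\hat X/\pi^n=\hat X_1/\pi^n\cup\hat X_2/\pi^n$, the square
	\carre{\Kcont(\hat X)}{\Kcont(A_+^1)}{\Kcont(A_+^2)}{\Kcont(A_+^3)}
is cartesian in $\Pro(\Sp)$, where $\Kcont(\hat X)=\Kcont(X)$ and $\Kcont(A_+^i)=\Kcont(\hat X_i)$ by Corollary~\ref{Kcont_formal_sheaf--cor}. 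Writing $\K(Y\on\pi):=\fib\bigl(\K(Y)\to\K(Y[\pi\inv])\bigr)$ for $\K$-theory with support on the vanishing locus of $\pi$, the weak fibre sequences of Proposition~\ref{Kcont-model_fibre_seq--prop} --- for the blow-up $X\to\Spec(A^\circ)$ and for the identity blow-ups $\Spec(A_+^i)\to\Spec(A_+^i)$ --- exhibit the three squares $\bigl[\K(\text{-}\on\pi)\bigr]$, $\bigl[\Kcont(\hat X_\bullet)\bigr]$ and $(\square)$ as a levelwise fibre sequence of squares. Since the middle square is (co)cartesian in $\Pro(\Sp)$ and $\iota^*$ preserves colimits, and since in the stable category $\Pro(\Sp^+)$ finite limits commute with finite colimits and both are computed levelwise (Reminder~\ref{pro-objects--reminder}), the square $(\square)$ becomes cartesian in $\Pro(\Sp^+)$ as soon as the square $\bigl[\K(\text{-}\on\pi)\bigr]$ is, after applying $\iota^*$.

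The crux is therefore the independent verification that the square formed by $\K(X\on\pi)$ and the $\K(\Spec(A_+^i)\on\pi)$, $i=1,2,3$, is (weakly) cartesian. The mechanism I would use is that $\K$-theory with support on $V(\pi)$ is insensitive to the two features distinguishing the formal pieces $\Spf(A_+^i)$ from honest Zariski opens of $X$: on the one hand $\K$-theory with support satisfies Zariski descent (Thomason--Trobaugh \cite{tt90}), which handles the gluing along the covering of a neighbourhood of $X/\pi$ by affine opens; on the other hand replacing the coordinate ring of such an affine open by its $\pi$-adic completion $A_+^i$ does not affect $\K$-theory with support on $V(\pi)$, since degreewise $\O(X_i)/\pi^n=A_+^i/\pi^n$ and, by devissage, $\K$-theory with support on $V(\pi)$ is the $G$-theory of coherent modules annihilated by a power of $\pi$, which depends only on these quotients. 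I expect this formal-invariance step --- and keeping it compatible with the weak equivalences of Proposition~\ref{Kcont-model_fibre_seq--prop}, where the noetherian hypothesis and pro-methods enter --- to be the main obstacle; the formal-model bookkeeping of the first step, although routine, also needs some care to match the generic fibres with the completed tensor product appearing in the statement.
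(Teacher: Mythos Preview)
Your overall strategy---reduce to a fibre sequence of squares with the middle square cartesian by Zariski descent on a formal model---is exactly the paper's, but your execution takes an unnecessary detour that introduces a real gap.

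The paper does not use the affine schemes $\Spec(A_+^i)$ in the $\K(\text{-}\on\pi)$ square. Instead, having found the formal blow-up $\Xcal\to\Spf(A_0)$ with open cover $\Xcal=\Xcal_1\cup\Xcal_2$, it invokes Lemma~\ref{algebraic_formal_blow-up--lemma} to algebraise: there is an admissible blow-up $X\to\Spec(A_0)$ of \emph{schemes} with a Zariski open cover $X=X_1\cup X_2$ whose completion is $\Xcal=\Xcal_1\cup\Xcal_2$. Now both squares
\[
\bigl[\K(X_i\on\pi)\bigr]\quad\text{and}\quad\bigl[\Kcont(X_i)\bigr]
\]
are cartesian by ordinary Zariski descent for the scheme $X$, and each $X_i$ is an admissible blow-up of some $\Spec(A^i_0)$, so Proposition~\ref{Kcont-model_fibre_seq--prop} applies at every corner. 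The cofibre square is then weakly equivalent to $(\square)$. No comparison between $\K(X_i\on\pi)$ and $\K(\Spec(A_+^i)\on\pi)$ is ever needed.

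Your ``formal-invariance'' step, by contrast, is both avoidable and incorrectly justified. The claim that $\K$-theory with support on $V(\pi)$ agrees with $G$-theory of $\pi^\infty$-torsion coherent modules is false for singular schemes: $\K(Y\on\pi)$ is the $\K$-theory of \emph{perfect} complexes supported on $V(\pi)$, and devissage does not apply without regularity. There are genuine theorems giving $\K(R\on\pi)\simeq\K(\hat R\on\pi)$ for noetherian $R$ (via pro-excision), but invoking them here is heavier than necessary. The fix is simply to keep the Zariski opens $X_i\subset X$ throughout and let Proposition~\ref{Kcont-model_fibre_seq--prop} absorb the passage to the generic fibre.
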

\begin{proof}
We note that the definition of the ring $A^3$ is independent of the choices of the rings of definition $A_+^1$ and $A_+^2$ and we forget about these choices.
According to Raynaud's equivalence of categories between quasi-compact admissible formal $k^\circ$-schemes localised by admissible formal blow-ups and quasi-compact and quasi-separated rigid $k$-spaces we find an admissible formal blow-up $\Xcal \to \Spf(A_0)$ and an open cover $\Xcal = \Xcal_1 \cup \Xcal_2$ whose associated generic fibre is the given cover $\Spm(A) = \Spm(A^1) \cup \Spm(A^2)$ \cite[\S 8.4]{bosch}. Since every admissible blow-up of the algebraic formal scheme $\Spf(A_0)$ is algebraic (Lemma~\ref{algebraic_formal_blow-up--lemma}), we find an admissible blow-up $X\to \Spec(A_0)$ and an open cover $X=X_1 \cup X_2$ whose formal completion is the cover $\Xcal = \Xcal_1 \cup \Xcal_2$. We set $X_3 := X_1 \cap X_2$ and note that for $i\in\{1,2,3\}$ there exist rings of definition $A^i_0$ of $A^i$, respectively, and admissible blow-ups $X_i\to\Spec(A^i_0)$. By Zariski descent have two cartesian squares
\[ \begin{xy} \xymatrix{
	\K(X\on\pi) \ar[d] \ar[r] & \K(X_1\on\pi) \ar[d]
	&& \Kcont(X) \ar[d] \ar[r] & \Kcont(X_1) \ar[d]
	\\
	\K(X_2\on\pi) \ar[r] & \K(X_3\on\pi)
	&& \Kcont(X_2) \ar[r] & \Kcont(X_3)	
} \end{xy} \]
where the right square is cartesian since it is levelwise cartesian (Reminder~\ref{pro-objects--reminder}). There is map from the left square to the right square. By Proposition~\ref{Kcont-model_fibre_seq--prop}, the square of cofibres is weakly equivalent to the square ($\square$) which is therefore weakly cartesian.
\end{proof}

The following statement is a standard result about extending sheaves from local objects to global ones and permits us to extend continuous K-theory to the category of rigid $k$-spaces. 

\begin{prop}
The inclusion $\iota \colon \Rig_k^\mathrm{aff} \inj \Rig_k$ of affinoid $k$-spaces into rigid $k$-spaces induces an equivalence
	\[
	\iota^* \Colon \Sh(\Rig_k) \To[\simeq] \Sh(\Rig_k^\mathrm{aff}).
	\]
Moreover, for every \infcat{} $\Dcal$ which admits small limits, the canonical map
	\[
	\iota^* \Colon \Sh_\Dcal(\Rig_k) \To[\simeq] \Sh_\Dcal(\Rig_k^\mathrm{aff}).
	\]
is an equivalence.
\end{prop}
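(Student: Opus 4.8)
The plan is to deduce the statement from the comparison lemma for sites (in the $1$-categorical form \cf{} \stacks{03A0}; the $\infty$-categorical version is by now standard, \cf{} \cite{htt}), which says that a fully faithful morphism of sites $\iota\colon\Ccal_0\inj\Ccal$ induces an equivalence $\iota^*\colon\Sh(\Ccal)\To[\simeq]\Sh(\Ccal_0)$ provided that (a) the topology on $\Ccal_0$ is the one induced from the topology on $\Ccal$, and (b) every object of $\Ccal$ admits a covering by objects of $\Ccal_0$. First I would observe that $\iota\colon\Rig_k^\mathrm{aff}\inj\Rig_k$ is fully faithful by the very definition of morphisms of rigid $k$-spaces, and that condition (b) is essentially tautological: rigid $k$-spaces are by construction obtained by glueing affinoid $k$-spaces along admissible coverings \cite[pt.~I]{bosch}, so every rigid $k$-space admits an admissible covering by affinoid $k$-spaces.

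The point requiring a little care is condition (a): one has to check that a family of morphisms of affinoid $k$-spaces with common target $\Spm(A)$ generates an admissible covering sieve inside $\Rig_k$ if and only if it is an admissible covering in the sense of the $G$-topology on affinoid $k$-spaces. This is part of the foundations of rigid geometry — it rests on Tate's acyclicity theorem and the Gerritzen-Grauert theorem, \cf{} \cite[pt.~I]{bosch}. Granting it, conditions (a) and (b) are satisfied, and the comparison lemma yields the first equivalence $\iota^*\colon\Sh(\Rig_k)\To[\simeq]\Sh(\Rig_k^\mathrm{aff})$ of $\infty$-topoi.

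For the variant with coefficients in an $\infty$-category $\Dcal$ admitting small limits I would reduce to the case $\Dcal=\Spc$ just treated. For any site $\Ccal$, right Kan extension along the Yoneda embedding identifies $\Fun(\Ccal\op,\Dcal)$ with the $\infty$-category of small-limit-preserving functors $\PSh(\Ccal)\op\to\Dcal$, and under this identification a presheaf is a $\Dcal$-valued sheaf precisely when the associated functor inverts all local equivalences, \ie{} factors (still as a limit-preserving functor) through the localisation $\PSh(\Ccal)\op\to\Sh(\Ccal)\op$; this yields a natural equivalence $\Sh_\Dcal(\Ccal)\simeq\Fun^{\mathrm{lim}}(\Sh(\Ccal)\op,\Dcal)$. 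Applying it to $\Ccal=\Rig_k$ and to $\Ccal=\Rig_k^\mathrm{aff}$ and precomposing with the equivalence of $\infty$-topoi just established then produces the desired $\iota^*\colon\Sh_\Dcal(\Rig_k)\To[\simeq]\Sh_\Dcal(\Rig_k^\mathrm{aff})$. I expect the main obstacle to lie in this last reduction rather than in the $\Spc$-valued case: since $\Dcal$ is only assumed to admit small limits (not to be presentable), one cannot simply invoke the tensor product of presentable $\infty$-categories, and must instead verify by hand that the class of morphisms of $\PSh(\Ccal)$ inverted by a small-limit-preserving functor $\PSh(\Ccal)\op\to\Dcal$ is strongly saturated, so that inverting the generating covering-sieve inclusions already forces inverting every local equivalence. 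Alternatively one may avoid this formalism and argue directly with the right adjoint $\iota_*$ defined by right Kan extension, the only nonformal input being that the unit $G\to\iota_*\iota^*G$ is an equivalence for every $\Dcal$-valued sheaf $G$ on $\Rig_k$ — which is the statement that such a sheaf is recovered from its restriction to affinoid $k$-spaces, and which again follows from (b) by a \Cech{} descent argument.
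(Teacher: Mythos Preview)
Your proposal is correct and in substance matches the paper's argument. The paper's proofsketch does by hand exactly what the comparison lemma packages: it exhibits the right adjoint $\iota_*$ by right Kan extension, checks that the counit is an equivalence because $(\iota_*F)(X)\simeq F(X)$ for affinoid $X$, and checks that the unit is an equivalence using that every admissible covering sieve is refined by an affinoid one --- this is precisely your ``alternative'' route at the end. For the $\Dcal$-valued variant the paper invokes the equivalence $\Sh_\Dcal(\Ccal)\simeq\Sh_\Dcal(\Sh(\Ccal))$ from \cite[1.3.1.7]{sag}, which is the same reduction you make via $\Sh_\Dcal(\Ccal)\simeq\Fun^{\mathrm{lim}}(\Sh(\Ccal)\op,\Dcal)$; your caution about $\Dcal$ not being presentable is well placed, but the cited result in \cite{sag} handles exactly this generality.
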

\begin{proof}
This follows from applying twice an $\infty$-categorical version of the ``comparison lemma'' \cite[C.3]{hoyois-quadratic}: first to the inclusion $\Rig_k^\mathrm{aff} \inj \Rig_k^\mathrm{sep}$ of affinoid spaces into separated spaces and secondly to the inclusion $\Rig_k^\mathrm{sep} \inj \Rig_k$.
\end{proof}

\begin{cor}[{\cite[3.5]{morrow}}] \label{Kcont_rigid_sheaf--cor}
There exists a unique sheaf $\Kcont$ on the category $\Rig_k$ (equipped with the admissible topology) that has values in $\Pro(\Sp^+)$ and satisfies $\Kcont(\Spm(A)) \simeq \Kcont(A)$ for every affinoid $k$-algebra $A$.
\end{cor}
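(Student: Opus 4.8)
The plan is to regard $\Kcont$ as a presheaf of pro-spectra on affinoid $k$-spaces, to check that it satisfies descent for the admissible topology, and then to transport it to $\Rig_k$ along the equivalence of the preceding proposition.

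First I would view continuous K-theory as a functor with values in $\Pro(\Sp^+)$. By Definition~\ref{defin-continuous_k-theory} and Remark~\ref{cont-k-theory--rem} (\cf{} also \cite[\S 6]{kst18}) the assignment $A\mapsto\iota^*\Kcont(A)$ is a well-defined functor from Tate rings to $\Pro(\Sp^+)$: the dependence on the chosen ring of definition and pseudo-uniformiser disappears after applying the localisation $\iota^*$ of Reminder~\ref{pro-objects--reminder}. Restricting along the inclusion of affinoid $k$-algebras, whose morphisms are automatically continuous, produces a presheaf on $\Rig_k^\mathrm{aff}$ with the admissible topology, which I continue to denote $\Kcont$, with values in $\Pro(\Sp^+)$. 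Since $\Sp^+$ has finite limits, $\Pro(\Sp^+)$ admits all small limits, so it is a legitimate target for the preceding proposition.

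Next I would verify the sheaf condition on $\Rig_k^\mathrm{aff}$. Every admissible covering of an affinoid $k$-space is refined by a finite covering by affinoid subdomains; by the Gerritzen--Grauert theorem such a covering is refined by a rational covering, a rational covering is refined by a Laurent covering, and a Laurent covering is a finite iterate of two-element coverings (see \cite[pt.~I]{bosch}). Hence the \Cech{} nerve of an arbitrary admissible covering is, up to refinement, built by iterated fibre products from \Cech{} nerves of two-element coverings, and by the standard descent argument it suffices to check that $\Kcont$ sends the empty space to the terminal object of $\Pro(\Sp^+)$ -- which holds since $\Kcont(0)=0$ -- and sends every two-element admissible covering $\Spm(A)=\Spm(A^1)\cup\Spm(A^2)$ to a pullback square. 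The latter is precisely \cite[3.4]{morrow} as stated above: the Mayer--Vietoris square $(\square)$ is weakly cartesian in $\Pro(\Sp)$, \ie{} cartesian in $\Pro(\Sp^+)$, which is its content after applying $\iota^*$.

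Finally, the preceding proposition with $\Dcal=\Pro(\Sp^+)$ yields an equivalence $\iota^*\colon\Sh_{\Pro(\Sp^+)}(\Rig_k)\To[\simeq]\Sh_{\Pro(\Sp^+)}(\Rig_k^\mathrm{aff})$, and I would let $\Kcont$ on $\Rig_k$ be the essentially unique preimage of the sheaf just built; then $\Kcont(\Spm(A))\simeq\iota^*\Kcont(A)$ for every affinoid $k$-algebra $A$ by construction, and uniqueness follows from the equivalence. I expect the main obstacle to be the combinatorial reduction of the sheaf condition to two-element coverings; once that is granted the rest is formal, the genuine rigid-analytic input having been isolated in \cite[3.4]{morrow}.
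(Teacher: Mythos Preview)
Your proposal is correct and follows the same route as the paper: the corollary is stated there without explicit proof since it is meant to follow immediately from the preceding lemma (two-element Mayer--Vietoris, \cite[3.4]{morrow}) together with the proposition identifying $\Sh_{\Pro(\Sp^+)}(\Rig_k)$ with $\Sh_{\Pro(\Sp^+)}(\Rig_k^\mathrm{aff})$. You have simply made explicit the step the paper leaves implicit, namely the reduction of the sheaf condition on affinoids to two-element coverings via Gerritzen--Grauert and iterated Laurent coverings; this is the standard argument and your account of it is fine.
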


\begin{cor} \label{Kcont_eta_formal_sheaf--cor}
The functor
	\[
	\Kcont((\_)_\eta) \Colon (\FSch_{k^\circ})\op \to (\Rig_k)\op \to  \Pro(\Sp^+), \quad \Xcal \mapsto \Xcal_\eta \mapsto \Kcont(\Xcal_\eta)
	\]
is a sheaf.
\end{cor}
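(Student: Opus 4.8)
The plan is to identify the functor $\Xcal \mapsto \Kcont(\Xcal_\eta)$ with the pullback of the admissible sheaf $\Kcont$ on $\Rig_k$ (Corollary~\ref{Kcont_rigid_sheaf--cor}) along the Raynaud generic fibre functor $(\_)_\eta \colon \FSch_{k^\circ} \to \Rig_k$, and to exploit that this functor is compatible with the two topologies involved. First I would record the standard properties of $(\_)_\eta$ \cite[pt.~II]{bosch}: it sends the empty formal scheme to the empty rigid space, open immersions of formal schemes to admissible open immersions, open covers of a formal scheme to admissible covers of its generic fibre, and it commutes with intersections of open formal subschemes, i.e. $(\Xcal_1 \cap \Xcal_2)_\eta = (\Xcal_1)_\eta \cap (\Xcal_2)_\eta$.

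Next, exactly as in the proof of Corollary~\ref{Kcont_formal_sheaf--cor}, I would invoke that the Zariski topology on $\FSch_{k^\circ}$ is generated by the (complete, regular, bounded) Zariski cd-structure, whose distinguished squares are the cartesian squares attached to two-element open covers $\Xcal = \Xcal_1 \cup \Xcal_2$, with lower-left corner $\Xcal_1 \cap \Xcal_2$. By \cite[Thm.~3.2.5]{ahw1}, a presheaf on $\FSch_{k^\circ}$ with values in $\Pro(\Sp^+)$ is a Zariski sheaf if and only if it sends $\emptyset$ to a terminal object and sends every distinguished square to a pullback square. For the presheaf $F := \Kcont((\_)_\eta)$ the first condition is immediate, since $\emptyset_\eta = \emptyset$ and $\Kcont(\emptyset) \simeq 0$ is terminal in the stable $\infty$-category $\Pro(\Sp^+)$. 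For the second, applying $(\_)_\eta$ to a distinguished square produces, by the properties recalled above, a two-element admissible cover $\{(\Xcal_1)_\eta, (\Xcal_2)_\eta\}$ of $\Xcal_\eta$ with intersection $(\Xcal_1 \cap \Xcal_2)_\eta$; since $\Kcont$ is a sheaf on $\Rig_k$ for the admissible topology, evaluating it on this cover yields precisely the Mayer--Vietoris pullback square with vertices $\Kcont(\Xcal_\eta)$, $\Kcont((\Xcal_1)_\eta)$, $\Kcont((\Xcal_2)_\eta)$ and $\Kcont((\Xcal_1 \cap \Xcal_2)_\eta)$ in $\Pro(\Sp^+)$. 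This establishes both conditions and hence the claim.

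The only nontrivial ingredient is the sheaf property of $\Kcont$ on $\Rig_k$, which is already in hand (Corollary~\ref{Kcont_rigid_sheaf--cor}, following Morrow \cite{morrow}); the rest is formal bookkeeping. I expect the point that needs the most care is that $\Xcal_\eta$ and the $(\Xcal_i)_\eta$ occurring above are in general not affinoid, so one genuinely needs $\Kcont$ as a sheaf on all of $\Rig_k$ and not merely on the affinoid site --- which is exactly the content of Corollary~\ref{Kcont_rigid_sheaf--cor}. An alternative, slightly heavier route would be to check that $(\_)_\eta$ is a continuous functor of sites and to invoke the general fact that precomposition along a continuous functor preserves $\Dcal$-valued sheaves for $\Dcal = \Pro(\Sp^+)$; the cd-structure argument is preferable because it only requires testing the sheaf condition on two-element covers.
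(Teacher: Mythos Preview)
Your proposal is correct and takes essentially the same approach as the paper, whose one-line proof simply records that Zariski covers of formal schemes induce admissible covers on generic fibres and implicitly invokes Corollary~\ref{Kcont_rigid_sheaf--cor}. You have merely unpacked this into the cd-structure formalism (as in Corollary~\ref{Kcont_formal_sheaf--cor}) and made explicit the non-affinoid caveat, which is entirely appropriate.
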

\begin{proof}
This follows from the fact that Zariski covers of formal schemes induce on generic fibres admissible covers of rigid spaces.
\end{proof}

	\section{Main Result: global case}	
	\label{sec-global-main-result}

In this section we conjecture that an analogous version of our main result (Theorem~\ref{main-thm}) for rigid spaces is true. We prove this conjecture in the algebraic case (\eg{} affinoid or projective) and in dimension at least two (Theorem~\ref{global-main-thm}). The constructions in this section are ad-hoc for our purposes and a full development of the formalism which will be based on adic spaces needs to be examined in future work.

\begin{notation*}
In this section let $k$ be a complete discretely valued field with valuation ring $k^\circ$ and uniformiser $\pi$. 
\end{notation*}

\begin{conj} \label{global-main-thm--conj}
Let $X$ be a quasi-compact and quasi-separated rigid $k$-space of dimension $d$. Then there is an isomorphism
	\[
	\Kcont_{-d}(X) \cong \Ho^d(X;\Z)
	\]
of pro-abelian groups. In particular, the pro-abelian group $\Kcont_{-d}(X)$ is constant.
\end{conj}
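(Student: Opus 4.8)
The statement is established (as Theorem~\ref{global-main-thm}) when $d\geq 2$ or when $X$ admits an algebraic formal model; I sketch a proof in that range together with a line of attack on the general case. First, we may assume $X$ is reduced (both sides are nil-invariant), and the cases $d=0$ and $d=1$ (curves) are elementary, so assume $d\geq 1$. The plan is to globalise the proof of the affinoid case (Theorem~\ref{main-thm}): choose a formal model $\Xcal$ of $X$ over $k^\circ$ by Raynaud's theorem and --- under the hypothesis that it may be taken algebraic (\eg{} $X$ affinoid or projective) --- a reduced quasi-compact quasi-separated $\pi$-torsion-free $k^\circ$-scheme $Y$ with $\hat Y\cong\Xcal$; put $U:=Y\setminus Y/\pi$, a dense open, so that the schematic Zariski--Riemann space $\skp{Y}_U$ of Section~\ref{sec-ZR-spaces} is available. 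By Lemma~\ref{formal-algebraic-ZR--lemma} and Theorem~\ref{adic_space_is_ZR--thm} its closed complement $\skp{Y}_U\setminus U$ is homeomorphic to the adic space $X\ad$, whose sheaf cohomology computes $\Ho^*(X;\Z)$ for the admissible topology.

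First I would globalise the fibre sequence of Proposition~\ref{Kcont-model_fibre_seq--prop} by Zariski descent on $Y$, so that every $U$-modification $Y'\to Y$ carries a weak fibre sequence $\K(Y'\on\pi)\to\Kcont(Y')\to\Kcont(X)$. Passing to the colimit over $\Mdf(Y,U)$ and using that $\colim_{Y'}\K_n(Y'\on\pi)=0$ for $n<0$ --- globalised by descent from Kerz's affine statement \cite[Prop.~7]{kerz-icm}, which rests on Raynaud--Gruson's \emph{platification par \'eclatement} --- gives $\Kcont_{-i}(X)\cong\colim_{Y'}\Kcont_{-i}(Y')$ for all $i>0$. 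For reduced $\pi$-torsion-free $Y'$ the special fibre and all its infinitesimal thickenings $Y'/\pi^n$ have dimension $d$, so Lemma~\ref{K_-d_is_nil-invariant--lemma} makes the pro-system $\prolim_n\K_{-d}(Y'/\pi^n)$ constant, with value $\K_{-d}$ of the reduced special fibre of $Y'$; hence $\Kcont_{-d}(X)$ is the filtered colimit over $\Mdf(Y,U)$ of these groups --- equivalently, the degree-$(-d)$ K-group of the special fibre of $\skp{Y}_U$ --- and is a constant pro-abelian group.

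It then remains to identify this group with $\Ho^d(X;\Z)$. A descent spectral sequence argument, built on Theorem~\ref{Lrh_K=KH--thm} ($\Lrh\K\simeq\KH$) and the computation of $\pi_0$ of the rh-sheafified K-theory --- this is Theorem~\ref{K_-d=Hrh^d--thm}, where the top degree $d=\dim$ is precisely what annihilates the lower-degree contributions --- identifies it with $\Hrh^d(\skp{Y}_U\setminus U;\Z)$. The comparison Theorem~\ref{Hzar=Hrh--thm} (Theorem~C) then replaces $\Hrh$ by $\Hzar$, \ie{} by ordinary sheaf cohomology of the topological space $\skp{Y}_U\setminus U$; and the homeomorphism $\skp{Y}_U\setminus U\cong X\ad$ together with the comparison of the admissible site of $X$ with the topology of $X\ad$ yields $\Ho^d(\skp{Y}_U\setminus U;\Z)\cong\Ho^d(X;\Z)$. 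Since every pro-object in sight is constant, so is $\Kcont_{-d}(X)$.

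The main obstacle --- and the reason the statement is only proved in the form of Theorem~\ref{global-main-thm} --- is the case where $X$ admits no algebraic formal model: then $\skp{Y}_U$ and Theorem~C are not available, and one must argue purely analytically. I would try the descent spectral sequence $E_2^{s,t}=\Ho^s(X;\underline{\Kcont_t})\Rightarrow\Kcont_{t-s}(X)$ attached to the sheaf $\Kcont$ (Corollary~\ref{Kcont_rigid_sheaf--cor}), together with the vanishing $\Ho^s(X;-)=0$ for $s>d$ (Theorem~\ref{Kcont-vanishing--thm}), the vanishing $\underline{\Kcont_{-j}}=0$ for $0<j\leq d$, and the identification $\underline{\Kcont_0}\cong\Z$; by Theorem~\ref{main-thm} these sheaf statements hold affinoid-locally once one knows local contractibility of the analytic space (available for smooth spaces by Berkovich~\cite{berkovich99}, and for completions of finite-type schemes by Hrushovski--Loeser~\cite{hrushovski-loeser}), after which the spectral sequence collapses onto $E_2^{d,0}=\Ho^d(X;\Z)$ with no room for differentials. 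Establishing these local statements for an \emph{arbitrary} quasi-compact quasi-separated rigid space, or circumventing them as one does when $d\geq 2$, is the crux.
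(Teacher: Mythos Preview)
Your sketch for the algebraic case is essentially correct and closely parallels the affinoid argument of Theorem~\ref{main-thm}, but it is not the route the paper takes to Theorem~\ref{global-main-thm}. The paper does \emph{not} fix a global algebraising scheme $Y$ and work through the schematic Zariski--Riemann space $\skp{Y}_U$; rather it works directly with formal models $\Xcal$ and the formal Zariski--Riemann space $\skp{\Xcal}$, defining $\Kcont(\Xcal\on\pi):=\fib\bigl(\Kcont(\Xcal)\to\Kcont(\Xcal_\eta)\bigr)$ (Definition~\ref{Kcont_on_pi--defin}). The vanishing $\colim_{\Xcal'}\Kcont_{-n}(\Xcal'\on\pi)=0$ is then obtained in two distinct ways: for algebraic $\Xcal$ one reduces to the schematic statement via Lemma~\ref{K_on_pi=K^cont_on_pi--lemma} (Corollary~\ref{algebraic_colimit_vanishing--cor}), but for \emph{arbitrary} $\Xcal$ and $n\geq 2$ one argues by Mayer--Vietoris and induction on the size of an affine cover (Lemma~\ref{formal_colimit_vanishing--lemma}), using that every admissible blow-up of a quasi-affine formal scheme is algebraic (Lemma~\ref{quasi-affine_formal_scheme_algebraic--lemma}) to feed the induction. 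The same inductive reduction to the quasi-affine case establishes the rh/Zariski comparison on the formal side (Lemma~\ref{Hzar=Hrh-formal--lem}). Your schematic approach, by contrast, requires a global $Y$ from the outset, so it only reaches the ``algebraic'' branch of Theorem~\ref{global-main-thm} and cannot touch the ``$d\geq 2$'' branch at all.

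Relatedly, your claim that $d=0$ and $d=1$ are ``elementary'' is wrong: non-algebraic curves (that is, $d=1$ with no algebraic formal model) are precisely the cases \emph{not} covered by Theorem~\ref{global-main-thm}, as the remark following that theorem in the paper makes explicit. You cannot dispose of $d=1$ at the outset and then run an argument that presupposes an algebraic model.

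Your descent spectral sequence proposal for the residual case is a genuinely different idea from anything the paper attempts. Its viability hinges on identifying the admissible sheafifications $\underline{\Kcont_{-j}}$ for all $0\leq j\leq d$, in particular showing $\underline{\Kcont_{-j}}=0$ for $0<j\leq d$. Theorem~\ref{main-thm} together with local contractibility would at best control the degree $-d$ piece on small affinoids; the intermediate degrees $0<j<d$ are untouched by any result in the paper, so this line of attack requires substantially more input than you indicate.
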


\begin{defin} \label{Kcont_on_pi--defin}
For an affine formal scheme $\Spf(\Anod)$ with associated generic fibre $\Spm(A)$ where $A=\Anod\otimes_{k^\circ}k$ there is by definition a map $\Kcont(\Anod)\to\Kcont(A)$. This map can be seen as a natural transformation $\FSch\aff\to\Pro(\Sp^+)$ which extends to a natural transformation
	\[
	\Kcont(\_) \to \Kcont((\_)_\eta) \Colon (\FSch_{k^\circ}^{\mathrm{lft}})\op \To \Pro(\Sp^+).
	\]
For a formal scheme $\Xcal$ locally of finite type over $k^\circ$ we define
	\[
	\Kcont(\Xcal\on\pi) \,:=\, \fib\bigl( \Kcont(\Xcal) \to \Kcont(\Xcal_\eta) \bigr)
	\]
where $\Xcal_\eta$ is the associated generic fibre. By construction and by Corollary~\ref{Kcont_formal_sheaf--cor} and Corollary~\ref{Kcont_eta_formal_sheaf--cor} the induced functor 
	\[
	\Kcont(\_\on\pi) \Colon (\FSch_{k^\circ}^{\mathrm{lft}})\op \to \Pro(\Sp^+)
	\]
is a sheaf.
\end{defin}

\begin{lemma} \label{K_on_pi=K^cont_on_pi--lemma}
Let $X$ be a $k^\circ$-scheme locally of finite type. Then there is a canonical equivalence
	\[
	\K(X\on\pi) \To[\simeq] \Kcont(\hat{X}\on\pi).
	\]
In particular, $\Kcont(\hat{X}\on\pi)$ is equivalent to a constant pro-spectrum.
\end{lemma}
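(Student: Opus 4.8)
The plan is to compare the two sides by definition and to use that $X$ is quasi-compact and quasi-separated. First I would note that $\K(X \on \pi)$ is, by Definition~\ref{k-theory-zr--def} (or rather the analogous definition for a single scheme), the fibre of the map $\K(X) \to \K(X_U)$ where $U = \Spec(k^\circ[\pi^{-1}])$ and $X_U = X \times_{\Spec(k^\circ)} \Spec(k[\pi^{-1}])$ is the generic fibre; equivalently it is $\K$ with support on the closed subscheme $X/\pi$. Since $X$ is a $k^\circ$-scheme, the generic fibre $X_U$ of $X$ as a scheme agrees with the scheme-theoretic generic fibre $\hat{X}_\eta$ of the formal completion $\hat X$ as a rigid $k$-space after passing to the relevant comparison, so the map $\K(X) \to \K(X_U)$ is the map $\K(X) \to \Kcont(\hat X_\eta)$ evaluated through the identification $\K(X_U) \simeq \Kcont(\hat X_\eta)$ when $X_U = \Spm(A)$ is affinoid. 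On the other hand, by Definition~\ref{Kcont_on_pi--defin}, $\Kcont(\hat X \on \pi)$ is the fibre of $\Kcont(\hat X) \to \Kcont(\hat X_\eta)$. So it suffices to produce a natural equivalence $\K(X) \to[\simeq] \Kcont(\hat X)$ compatible with the maps to (the common target describing) the generic fibre, and then take fibres.

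The equivalence $\K(X) \simeq \Kcont(\hat X)$ in the affine case is exactly the content of Corollary~\ref{Kcont_formal_sheaf--cor}, which gives $\Kcont(\Spf(A_0)) \simeq \Kcont(A_0)$, together with nil-invariance: since $\Kcont(A_0) = \prolim_n \K(A_0/\pi^n)$ and $A_0/\pi^n$ has the same underlying reduced scheme as $A_0/\pi$... wait — that is false in general, so I must be careful. The point is rather that for the \emph{pro-spectrum} $\Kcont(\hat X) = \prolim_n \K(X/\pi^n)$, the comparison to the \emph{constant} pro-spectrum $\K(X)$ needs a genuine nil-invariance input. Actually, here is where I want to reconsider: the claim that $\Kcont(\hat X \on \pi)$ is constant, equivalent to $\K(X \on \pi)$, should come from the fact that $\K(X \on \pi)$ is already nil-invariant when restricted to the closed fibre — more precisely from the pro-cdh descent / nil-invariance results, but I suspect the cleaner route is: $\K(-\on\pi)$ as a functor on $k^\circ$-schemes, being supported on the $\pi=0$ locus, only depends on a formal/henselian neighbourhood, and the map to the pro-system $\prolim_n \K(X/\pi^n \on \pi)$ stabilises. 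I would therefore reduce to the affine case by Zariski descent (both $\K(-\on\pi)$ on $\Schqc_{k^\circ}$ and $\Kcont(-\on\pi)$ on $\FSch_\Zar$ are sheaves, the latter by Definition~\ref{Kcont_on_pi--defin}, the former by standard Zariski descent for $\K$ with support, using that $X$ is quasi-compact quasi-separated so has a finite Zariski cover by affines with affine intersections), and then in the affine case $X = \Spec(A_0)$ invoke Corollary~\ref{Kcont_formal_sheaf--cor} and the fibre sequence $\K(\Spec(A_0)\on\pi) \to \K(A_0) \to \K(A)$ versus $\Kcont(\Spf(A_0)\on\pi) \to \Kcont(A_0) \to \Kcont(A)$, where the middle terms agree and the right-hand terms both compute the generic fibre.

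The main obstacle, I expect, is precisely pinning down the affine comparison: showing that the map from the constant pro-spectrum $\K(\Spec(A_0) \on \pi)$ to the pro-spectrum $\prolim_n \K(\Spec(A_0/\pi^n) \on \pi)$ defining $\Kcont(\Spf(A_0) \on \pi)$ is a weak equivalence of pro-spectra. This is a statement of \emph{pro-nil-invariance} of $\K$-theory with support on the special fibre; it should follow from the fact that $\K$-theory with support along a closed subscheme, viewed pro-systematically over thickenings, is pro-constant — which in turn one can deduce from the pro-cdh-descent machinery of Kerz--Strunk--Tamme, or more directly from the observation that $A_0 \to A_0/\pi^n$ induces isomorphisms after taking fibre along $\K(-) \to \K(-[\pi^{-1}])$ in a pro-sense because $A_0/\pi^n$ and $A_0/\pi$ have the same generic fibre (namely the empty scheme) and the same reduced structure. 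Once this affine pro-nil-invariance is in hand, gluing via Zariski descent and taking fibres of the map $\K(X) \to \Kcont(\hat X)$ over the common generic-fibre term finishes the proof, and constancy of $\Kcont(\hat X \on \pi)$ is immediate since it is then equivalent to the constant pro-spectrum $\K(X \on \pi)$.
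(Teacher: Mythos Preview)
Your reduction to the affine case by Zariski descent is right and matches the paper. The gap is in the affine case itself, where you make two related missteps.

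First, you write that in the two fibre sequences
\[
\K(\Spec(A_0)\on\pi)\to\K(A_0)\to\K(A)
\quad\text{and}\quad
\Kcont(\Spf(A_0)\on\pi)\to\Kcont(A_0)\to\Kcont(A)
\]
``the middle terms agree and the right-hand terms both compute the generic fibre''. Neither is true: $\K(A_0)\to\Kcont(A_0)$ and $\K(A)\to\Kcont(A)$ are not equivalences in general. Second, you then identify $\Kcont(\Spf(A_0)\on\pi)$ with $\prolim_n\K(\Spec(A_0/\pi^n)\on\pi)$; but since $\pi$ is nilpotent in $A_0/\pi^n$ this pro-system is just $\Kcont(A_0)$, not its fibre over $\Kcont(A)$. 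So your ``main obstacle'' is the wrong statement, and the pro-nil-invariance machinery you invoke is aimed at a target that is not the one in the lemma.

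What you are missing is much simpler. By Definition~\ref{defin-continuous_k-theory} the square
\[
\begin{xy}\xymatrix{\K(A_0)\ar[r]\ar[d]&\K(A)\ar[d]\\ \Kcont(A_0)\ar[r]&\Kcont(A)}\end{xy}
\]
is a \emph{pushout} in $\Pro(\Sp)$. Since $\Pro(\Sp)$ is stable, this square is automatically also a \emph{pullback}, hence the horizontal fibres agree: $\K(A_0\on\pi)\simeq\Kcont(\Spf(A_0)\on\pi)$. No pro-cdh descent, no pro-nil-invariance; the affine case is a one-line consequence of the definition. Then your Čech/Zariski-descent argument (which the paper also uses) globalises this to quasi-compact quasi-separated $X$.
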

\begin{proof}
If $X=\Spec(\Anod)$ is affine we have by Definition~\ref{defin-continuous_k-theory} a pushout square
	\carre{\K(\Anod)}{\K(\Anod[\pi\inv])}{\Kcont(\Anod)}{\Kcont(\Anod[\pi\inv]).}
Since the category $\Sp$ is stable, this also holds for $\Pro(\Sp)$. Thus the square is also a pullback and we have an equivalence $\K(\Anod\on\pi)\simeq\Kcont(\Anod\on\pi)$ of the horizontal fibres.
If $X$ is quasi-compact and separated, choose a finite affine cover $(U_i)_i$ which yields a commutative diagram
	\carre{\K(X\on\pi)}{\lim\limits_\Delta \K(\check{U}_\bullet\on\pi)}{\Kcont(\hat{X}\on\pi)}{\lim\limits_\Delta \Kcont(\check{\hat{U}}_\bullet\on\pi)}
where $\check{U}_\bullet$ and $\check{\hat{U}}_\bullet$ are the $\check{\mathrm{C}}$ech nerves of the cover $(U_i)_i$ of $X$ respectively the induced cover $(\hat{U}_i)_i$ of $\hat{X}$. Thus the horizontal maps are equivalences. By the affine case the right vertical map is an equivalence, hence also the left vertical map as desired. For the quasi-separated case we reduce analogously to the separated case.
\end{proof}

\begin{cor}
Let $X$ be a $k^\circ$-scheme locally of finite type. Then the square
	\carre{\K(X)}{\K(X_k)}{\Kcont(\hat{X})}{\Kcont(\hat{X}_\eta)}
is cartesian in $\Pro(\Sp^+)$ where $X_k := X \times_{\Spec(k^\circ)} \Spec(k)$.	
\end{cor}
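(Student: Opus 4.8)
The plan is to recognise the asserted square as the comparison of two fibre sequences whose induced map on fibres is exactly the equivalence of Lemma~\ref{K_on_pi=K^cont_on_pi--lemma}. First I would note that $X_k = X\times_{\Spec(k^\circ)}\Spec(k)$ is the principal open $X[\pi\inv]$, i.e.\ the quasi-compact open complement of the vanishing locus $V(\pi)\subseteq X$; it is quasi-compact because $X$ is qcqs. The Thomason--Trobaugh localisation theorem \cite[\S5, \S7]{tt90} then gives a fibre sequence of spectra
\[
\K(X\on\pi) \To \K(X) \To \K(X_k),
\]
where $\K(X\on\pi)$ is K-theory of $X$ with support on $V(\pi)$. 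Viewing spectra as constant pro-spectra and applying $\iota^*\colon\Pro(\Sp)\to\Pro(\Sp^+)$ — a left adjoint, hence exact — this stays a fibre sequence in $\Pro(\Sp^+)$.

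Next I would invoke the definition (Definition~\ref{Kcont_on_pi--defin}) of $\Kcont(\hat X\on\pi)$ as $\fib\bigl(\Kcont(\hat X)\to\Kcont(\hat X_\eta)\bigr)$, so that the bottom row of the square in question extends to a fibre sequence $\Kcont(\hat X\on\pi)\to\Kcont(\hat X)\to\Kcont(\hat X_\eta)$ in $\Pro(\Sp^+)$. The canonical natural transformation $\K(\_)\to\Kcont(\hat{\,\_\,})\colon\FSch^\op\to\Pro(\Sp^+)$, together with $\K((\_)_k)\to\Kcont((\_)_\eta)$, maps the first fibre sequence to the second; its restriction to the two right-hand terms is precisely the square of the statement. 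Since in a stable \infcat{} a commutative square is cartesian if and only if the induced map on the horizontal fibres of its two rows is an equivalence, and here those fibres are $\K(X\on\pi)$ and $\Kcont(\hat X\on\pi)$ with the comparison map being the equivalence of Lemma~\ref{K_on_pi=K^cont_on_pi--lemma}, the square is cartesian in $\Pro(\Sp^+)$.

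There is no real obstacle here — all the substance is in Lemma~\ref{K_on_pi=K^cont_on_pi--lemma} — so the corollary is formal. The only point that needs a line of care is the bookkeeping: that the map $\K(X\on\pi)\to\Kcont(\hat X\on\pi)$ arising as the map of horizontal fibres of the comparison square coincides with the equivalence of Lemma~\ref{K_on_pi=K^cont_on_pi--lemma}. This holds because both are induced by the one natural transformation $\K(\_)\to\Kcont(\hat{\,\_\,})$ of Zariski sheaves on $\FSch_{k^\circ}$ (Corollary~\ref{Kcont_formal_sheaf--cor}), which on affine objects is the canonical map $\K(A_0)\to\Kcont(A_0)$ of Definition~\ref{defin-continuous_k-theory}; unwinding the constructions makes the identification immediate.
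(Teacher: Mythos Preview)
Your proof is correct and follows essentially the same approach as the paper: both compare the two fibre sequences $\K(X\on\pi)\to\K(X)\to\K(X_k)$ and $\Kcont(\hat X\on\pi)\to\Kcont(\hat X)\to\Kcont(\hat X_\eta)$ and conclude that the right-hand square is cartesian because the map on fibres is the equivalence of Lemma~\ref{K_on_pi=K^cont_on_pi--lemma}. Your version is simply more explicit about the localisation sequence and the bookkeeping identifying the induced map on fibres.
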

\begin{proof}
By design there is a commutative diagram of fibre sequences
	\[
	\begin{xy} 
	\xymatrix{
	\K(X\on\pi) \ar[r] \ar[d] & \K(X) \ar[r] \ar[d] & \K(X_k) \ar[d]\\
	\Kcont(\hat{X}\on\pi) \ar[r] & \Kcont(\hat{X})	 \ar[r] & \Kcont(\hat{X}_\eta)
	}
	\end{xy}
	\]
where the left vertical map is an equivalence due to Lemma~\ref{K_on_pi=K^cont_on_pi--lemma}. 
\end{proof}

\begin{cor} \label{algebraic_colimit_vanishing--cor}
Let $X$ be a reduced $k^\circ$-scheme locally of finite type. For $n\geq 1$ we have 
	\[
	\colim_{\Xcal'} \, \Kcont_{-n}(\Xcal'\on\pi) = 0
	\]
where $\Xcal'$ runs over all admissible formal blow-ups of $\hat{X}$.\footnote{There is no trouble with this colimit since the pro-spectrum in question is constant by Lemma~\ref{K_on_pi=K^cont_on_pi--lemma}. In general, colimits in pro-categories are hard to compute.}
\end{cor}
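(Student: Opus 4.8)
The plan is to reduce, via Lemma~\ref{K_on_pi=K^cont_on_pi--lemma}, to the vanishing of $\colim_{X'}\K_{-n}(X'\on\pi)$ over the $U$-admissible blow-ups $X'$ of a suitable scheme, which is the content of Theorem~\ref{ZR_negative_K_support_vanishing--thm} (\ie{} Kerz's \emph{platification par \'eclatement} argument, \cite[Prop.~7]{kerz-icm}), together with Zariski descent.

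First I would write $\Xcal \cong \hat X$ for a quasi-compact $k^\circ$-scheme $X$ of finite type (hence noetherian), which is possible since $\Xcal$ is algebraic and admissible. Put $U := X \setminus X/\pi = X \times_{\Spec k^\circ} \Spec k$, an open subscheme of $X$. By Definition~\ref{algebraic-formal--def} every $U$-admissible blow-up $X' \to X$ induces an admissible formal blow-up $\hat{X'} \to \Xcal$, and by Lemma~\ref{algebraic_formal_blow-up--lemma} \emph{every} admissible formal blow-up of $\Xcal$ arises this way; a routine check (as for the cofinality statements in Section~\ref{sec-ZR-spaces}) shows the resulting functor from $U$-admissible blow-ups of $X$ to admissible formal blow-ups of $\Xcal$ is cofinal. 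Applying Lemma~\ref{K_on_pi=K^cont_on_pi--lemma} functorially in $X'$ yields equivalences $\Kcont(\hat{X'}\on\pi) \simeq \K(X'\on\pi)$ of \emph{constant} pro-spectra, so the colimit in question is computed degreewise and
\[
\colim_{\Xcal'} \Kcont_{-n}(\Xcal'\on\pi) \;\cong\; \colim_{X'} \K_{-n}(X'\on\pi),
\]
the right-hand colimit running over all $U$-admissible blow-ups $X'\to X$.

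It remains to prove $\colim_{X'}\K_{-n}(X'\on\pi) = 0$ for $n \geq 1$. If $X = \Spec(B)$ is affine, its $\pi$-adic completion $B_0$ is noetherian and topologically of finite type over $k^\circ$, so with $A := B_0[\pi^{-1}]$ the triple $(A,B_0,\pi)$ is a Tate triple and $\Spf(B_0) = \hat X$; running the previous paragraph also for $\Spec(B_0)$ in place of $X$ identifies $\colim_{X'}\K(X'\on\pi)$ with $\colim_{Y\in\Adm(B_0)}\K(Y\on\pi) \simeq \K(\skp{B_0}_A\on\pi)$ (the Remark following Definition~\ref{k-theory-zr--def}), which vanishes in negative degrees by Theorem~\ref{ZR_negative_K_support_vanishing--thm}. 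For general $X$ I would choose a finite affine cover, observe that $\K(-\on\pi) = \fib\bigl(\K(-)\to\K((-)_k)\bigr)$ is a Zariski sheaf of spectra on $k^\circ$-schemes, that restriction of $U$-admissible blow-ups along an open immersion $V \hookrightarrow X$ is cofinal onto the $(V\cap U)$-admissible blow-ups of $V$ (a quasi-coherent ideal sheaf on $V/\pi$ extends to $X/\pi$ by scheme-theoretic closure, compatibly with the flat restriction to $V$), and that filtered colimits preserve the resulting Mayer--Vietoris fibre sequences; induction on the size of the cover then reduces to the affine case.

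The main obstacle is precisely this last reduction: one has to make the compatibility of the cofiltered diagram of $U$-admissible blow-ups with an open cover (cofinality on each member and on the intersections) precise enough that the filtered colimit commutes with the finitely many pullback squares encoding Zariski descent for $\K(-\on\pi)$. Everything else — the algebraicity of admissible formal blow-ups, the passage between $X$ and its $\pi$-adic completion, and the constancy of the pro-spectra that renders the colimit harmless — is supplied directly by Lemma~\ref{algebraic_formal_blow-up--lemma} and Lemma~\ref{K_on_pi=K^cont_on_pi--lemma}.
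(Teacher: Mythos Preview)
Your proposal is correct and follows the paper's line: use Lemma~\ref{algebraic_formal_blow-up--lemma} to re-index the colimit over $U$-admissible blow-ups of an algebraisation $X$, use Lemma~\ref{K_on_pi=K^cont_on_pi--lemma} to replace $\Kcont(\hat{X'}\on\pi)$ by $\K(X'\on\pi)$, and then invoke Kerz's vanishing result.

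The only difference is in the last step. The paper cites Theorem~\ref{ZR_negative_K_support_vanishing--thm} (i.e.\ \cite[Prop.~7]{kerz-icm}) directly for $\colim_{X'}\K_{-n}(X'\on\pi)=0$: Kerz's \emph{platification} argument applies to any noetherian model $X'$ over $k^\circ$, not only to admissible blow-ups of an affine $\Spec(A_0)$, so no reduction to the affine case is needed. You instead take the more cautious route of reading Theorem~\ref{ZR_negative_K_support_vanishing--thm} as an affine statement and reducing to it by a Zariski/Mayer--Vietoris induction; this is correct (the cofinality of restricted blow-ups under an open immersion that you sketch is exactly what is used, and filtered colimits commute with the relevant finite limits), but it is extra work. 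Your detour through the $\pi$-adic completion $B_0$ in the affine case is likewise unnecessary: one may apply Kerz's result to $\Spec(B)$ itself.
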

\begin{proof}
Since every admissible formal blow-up of an algebraic formal scheme is algebraic (Lemma~\ref{algebraic_formal_blow-up--lemma}), due to Lemma~\ref{K_on_pi=K^cont_on_pi--lemma}, and by \cite[Prop.~7]{kerz-icm} we have  
	\[
	\colim_{\Xcal'}\, \Kcont_{-n}(\Xcal'\on\pi) \cong \colim_{X'}\, \Kcont_{-n}(\hat{X'}\on\pi) \cong \colim_{X'}\, \K_{-n}(X'\on\pi) = 0 
	\]
where the latter two colimits are indexed by all $X_k$-admissible blow-ups of and where $X_k := X \times_{\Spec(k^\circ)} \Spec(k)$.	
\end{proof}

\begin{lemma} \label{formal_colimit_vanishing--lemma}
Let $\Xcal$ be a quasi-compact admissible formal scheme. For $n\geq 2$ have 
	\[
	\colim_{\Xcal'} \, \Kcont_{-n}(\Xcal'\on\pi) = 0
	\]
where $\Xcal'$ runs over all admissible formal blow-ups of $\Xcal$.
\end{lemma}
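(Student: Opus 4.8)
The plan is to reduce to Corollary~\ref{algebraic_colimit_vanishing--cor} (the algebraic case) by an induction on the size of an affine cover of $\Xcal$, with Mayer--Vietoris for the Zariski sheaf $\Kcont(\_\on\pi)$ (Definition~\ref{Kcont_on_pi--defin}) as the engine; the crucial observation is that the overlap of an affine open with any open is quasi-affine, hence algebraic.

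First I would dispose of a technical point so that the colimits involved are harmless. Every $\Xcal'\in\Adm(\Xcal)$ is again quasi-compact and quasi-separated; covering it by finitely many affine opens --- whose higher intersections are quasi-compact opens of affine formal schemes, and therefore quasi-affine --- and using that $\Kcont(\_\on\pi)$ is a sheaf, one writes $\Kcont(\Xcal'\on\pi)$ as a finite homotopy limit of pro-spectra $\Kcont(\,\cdot\,\on\pi)$ of quasi-affine formal schemes. The latter are algebraic (Lemma~\ref{quasi-affine_formal_scheme_algebraic--lemma} applied to the trivial blow-up), hence constant by Lemma~\ref{K_on_pi=K^cont_on_pi--lemma}, and since constant pro-spectra are closed under finite limits, $\Kcont(\Xcal'\on\pi)$ is constant; thus $\colim_{\Xcal'}\Kcont_{-n}(\Xcal'\on\pi)$ is an ordinary filtered colimit of abelian groups. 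Moreover, since an admissible formal blow-up of an open subscheme of $\Xcal$ extends to one of $\Xcal$ (\cite[\S 8, Prop.~13]{bosch}; see also the proof of Lemma~\ref{quasi-affine_formal_scheme_algebraic--lemma}), the restriction functor $\Adm(\Xcal)\to\Adm(\Ucal)$ is cofinal for any open $\Ucal\subseteq\Xcal$, so that the natural map
\[
\colim_{\Xcal'\in\Adm(\Xcal)}\Kcont\bigl(\Xcal'\times_\Xcal\Ucal\on\pi\bigr)\To\colim_{\Ucal'\in\Adm(\Ucal)}\Kcont(\Ucal'\on\pi)
\]
is an equivalence; in particular, for $\Ucal$ quasi-affine the left-hand side has vanishing homotopy in all degrees $\leq -1$, by Corollary~\ref{algebraic_colimit_vanishing--cor}.

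Now I would induct on the least number $m$ of affine opens covering $\Xcal$. The case $m=1$ is Corollary~\ref{algebraic_colimit_vanishing--cor}. For $m>1$ write $\Xcal=\Ucal\cup\Vcal$ with $\Vcal$ affine and $\Ucal$ coverable by $m-1$ affine opens, and put $\mathcal{W}:=\Ucal\cap\Vcal$, a quasi-compact open of the affine $\Vcal$, hence quasi-affine. Writing $\Ucal':=\Ucal\times_\Xcal\Xcal'$, $\Vcal':=\Vcal\times_\Xcal\Xcal'$ and $\mathcal{W}':=\mathcal{W}\times_\Xcal\Xcal'$, and applying the exact functor $\colim_{\Xcal'}$ to the Mayer--Vietoris long exact sequence attached to the descent square for the cover $\{\Ucal',\Vcal'\}$ of $\Xcal'$, one obtains an exact sequence
\[
\colim_{\Xcal'}\Kcont_{-n+1}(\mathcal{W}'\on\pi)\To\colim_{\Xcal'}\Kcont_{-n}(\Xcal'\on\pi)\To\colim_{\Xcal'}\Kcont_{-n}(\Ucal'\on\pi)\oplus\colim_{\Xcal'}\Kcont_{-n}(\Vcal'\on\pi).
\]
For $n\geq 2$ the left term vanishes since $\mathcal{W}$ is quasi-affine and $n-1\geq 1$, and the two right terms vanish by the inductive hypothesis applied to $\Ucal$ and by the case $m=1$ applied to $\Vcal$ (in all three cases via the identification of colimits above); hence the middle term vanishes, which is the assertion.

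The one genuine subtlety --- and the reason the bound drops from the $n\geq 1$ of the algebraic case (Corollary~\ref{algebraic_colimit_vanishing--cor}) to $n\geq 2$ here --- is the degree shift built into Mayer--Vietoris, which forces the overlap term $\Kcont_{-n+1}(\mathcal{W}'\on\pi)$; the induction closes precisely because the overlap of an affine open with another open is itself quasi-affine and therefore already enjoys the stronger vanishing in degrees $\leq -1$. The remaining ingredients --- constancy of the pro-spectra (so that the colimits are literal colimits of abelian groups) and the extension of admissible formal blow-ups from opens to the ambient formal scheme --- are routine given the cited results, and this is where I expect the (minor) bookkeeping effort to concentrate.
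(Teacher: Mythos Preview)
Your proposal is correct and follows essentially the same route as the paper: induction on the size of an affine cover, Mayer--Vietoris for the sheaf $\Kcont(\_\on\pi)$, and the key observation that the overlap of an affine open with another open is quasi-affine and hence algebraic (so Corollary~\ref{algebraic_colimit_vanishing--cor} applies in degree $-n+1$). The only difference is packaging: the paper chases a fixed element $\alpha$ through successive blow-ups, whereas you first establish constancy of the pro-spectra and cofinality of $\Adm(\Xcal)\to\Adm(\Ucal)$ so as to commute the filtered colimit past Mayer--Vietoris once and for all; this is a cleaner bookkeeping of the same argument.
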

\begin{proof}
Let $\alpha\in\Kcont_{-n}(\Xcal\on\pi)$. We choose a finite affine cover $(\Ucal_i)_{i\in I}$ of $\Xcal$ where $I=\{1,\ldots,k\}$. By the affine case, we find for every $i\in I$ an admissible formal blow-up $\Ucal'_i\to\Ucal_i$ such that the map $\Kcont_{-n}(\Ucal_i)\to\Kcont_{-n}(\Ucal'_i)$ sends $\alpha|_{\Ucal_i}$ to zero. There exists an admissible formal blow-up $\Xcal'\to\Xcal$ locally dominating these local blow-ups, \ie{} for every $i\in I$ the pullback $\Xcal'\times_{\Xcal}\Ucal_i \to \Ucal_i$ factors over $\Ucal'\to\Ucal$ \cite[8.2, Prop.~14]{bosch}. We may assume that $\Xcal'\times_\Xcal\Ucal_i = \Ucal'_i$. Setting $\Vcal := \Ucal_2 \cup \ldots \cup \Ucal_k$ one obtains a commutative diagram
	\[
	\begin{xy} 
	\xymatrix{
	\Kcont_{-n+1}(\Ucal_1\cap\Vcal\on\pi) \ar[r] \ar[d]
	& \Kcont_{-n}(\Xcal\on\pi) \ar[r] \ar[d]
	& \Kcont_{-n}(\Ucal_1\on\pi) \oplus \Kcont_{-n}(\Vcal\on\pi) \ar[d]
	\\
	\Kcont_{-n+1}(\Ucal'_1\cap\Vcal'\on\pi) \ar[r] 
	& \Kcont_{-n}(\Xcal'\on\pi) \ar[r] 
	& \Kcont_{-n}(\Ucal'_1\on\pi) \oplus \Kcont_{-n}(\Vcal'\on\pi)		
	}
	\end{xy}
	\]
of Mayer-Vietoris sequences. By the affine case and by induction on the cardinality of the affine cover, $\alpha$ maps to zero in $\Kcont_{-n}(\Ucal'_1) \oplus \Kcont_{-n}(\Vcal')$. Hence its image in $\Kcont_{-n}(\Xcal'\on\pi)$ comes from an element $\alpha'$ in $\Kcont_{-n+1}(\Ucal'_1\cap\Vcal')$. As an admissible formal blow-up of the quasi-affine admissible formal scheme $\Ucal_1\cap\Vcal$, the formal scheme $\Ucal_1'\cap\Vcal'$ is algebraic according to Lemma~\ref{quasi-affine_formal_scheme_algebraic--lemma}. Hence there exists an admissible formal blow-up of $\Ucal_1'\cap\Vcal'$ where $\alpha'$ vanishes. As above this can be dominated by an admissible formal blow-up $\Xcal''\to\Xcal'$ so that the image of $\alpha$ in $\Kcont_{-n}(\Xcal''\on\pi)$ vanishes.
\end{proof}

Next we do another similar reduction to the affinoid case.

\begin{lemma} \label{Hzar=Hrh-formal--lem}
For every quasi-compact admissible formal scheme $\Xcal$ and every constant rh-sheaf $F$ the canonical map
	\[
	\colim_{\Xcal'}\, \Hzar^*(\Xcal'/\pi;F) \To \colim_{\Xcal'}\, \Hrh^*(\Xcal'/\pi;F)
	\]
is an isomorphism.
\end{lemma}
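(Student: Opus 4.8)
The plan is to reduce the global statement for a quasi-compact formal scheme $\Xcal$ to the affinoid (algebraic) case, which was essentially settled by Corollary~\ref{Hzar=Hrh-colimit_cor} (via Theorem~\ref{Hzar=Hrh--thm}). First I would recall that an affine formal scheme $\Spf(\Anod)$ is the formal completion of $\Spec(\Anod)$, so by Lemma~\ref{formal-algebraic-ZR--lemma} its special fibre is (homeomorphic to) an actual scheme $\Spec(\Anod/\pi)$, and the admissible formal blow-ups of $\Spf(\Anod)$ correspond to $U$-admissible blow-ups of $\Spec(\Anod)$ with $U = \Spec(\Anod[\pi\inv])$ (Lemma~\ref{algebraic_formal_blow-up--lemma}, Lemma~\ref{quasi-affine_formal_scheme_algebraic--lemma}). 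Hence in the affine case the colimit $\colim_{\Xcal'} \Hzar^*(\Xcal'/\pi;F) = \colim_{X'} \Hzar^*(X'_Z;F)$ and likewise for rh, and Corollary~\ref{Hzar=Hrh-colimit_cor} gives the isomorphism directly. The same applies whenever $\Xcal$ is a quasi-affine (hence algebraic) formal scheme.

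Next I would globalise by a Mayer–Vietoris/induction argument on the size of an affine cover, exactly in the style of the proof of Lemma~\ref{formal_colimit_vanishing--lemma}. Cover $\Xcal$ by finitely many affine opens $(\Ucal_i)_{i=1}^k$; set $\Ucal_1$ and $\Vcal := \Ucal_2 \cup \dots \cup \Ucal_k$, so that $\Ucal_1 \cap \Vcal$ is quasi-affine. For each admissible formal blow-up in the indexing system one has a Mayer–Vietoris long exact sequence in Zariski cohomology of special fibres, and compatibly one in rh-cohomology; these are related by the canonical change-of-topology map. Since filtered colimits are exact, passing to the colimit over all admissible formal blow-ups $\Xcal'$ (using that such blow-ups can be chosen to dominate any prescribed local ones, \cite[8.2, Prop.~14]{bosch}) yields a morphism of long exact Mayer–Vietoris sequences. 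By the affine case the maps on $\colim \Hzar^*(\Ucal_1'/\pi;F) \to \colim \Hrh^*(\Ucal_1'/\pi;F)$ and on $\colim \Hzar^*((\Ucal_1 \cap \Vcal)'/\pi;F) \to \colim \Hrh^*((\Ucal_1\cap\Vcal)'/\pi;F)$ are isomorphisms (the latter because $\Ucal_1 \cap \Vcal$ is quasi-affine, hence algebraic), and by the inductive hypothesis the map for $\Vcal$ is an isomorphism. The five lemma then forces the map for $\Xcal$ to be an isomorphism.

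One technical point I would need to pin down is that the two Mayer–Vietoris sequences genuinely fit into a commutative ladder after passing to the colimit over admissible formal blow-ups — that is, that the colimit index categories for $\Xcal$, $\Ucal_1$, $\Vcal$, and $\Ucal_1\cap\Vcal$ are cofinally compatible. This is handled exactly as in Lemma~\ref{formal_colimit_vanishing--lemma}: choose $\Xcal' \to \Xcal$ dominating the chosen blow-ups of the pieces, arrange $\Xcal' \times_\Xcal \Ucal_i$ to be the prescribed local blow-up, and observe that restriction of a blow-up of the whole to the pieces is again admissible. The Mayer–Vietoris sequence for a two-set Zariski cover (resp. rh-cover) of a special fibre is then functorial in the blow-up, and the colimit of exact sequences is exact.

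The main obstacle is not any single deep input but making the bookkeeping of the nested colimits precise: one must ensure that ``$\colim$ over admissible formal blow-ups of $\Xcal$'' restricts well to each member of the cover and that the Mayer–Vietoris ladder is strictly commutative at the colimit level. Once that is set up cleanly, the affine case plus the five lemma finish the argument; I would therefore spend most of the write-up on the cover/blow-up compatibility and keep the cohomological part brief, citing Corollary~\ref{Hzar=Hrh-colimit_cor}, Lemma~\ref{algebraic_formal_blow-up--lemma}, Lemma~\ref{quasi-affine_formal_scheme_algebraic--lemma}, and Lemma~\ref{formal-algebraic-ZR--lemma}.
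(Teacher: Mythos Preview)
Your proposal is correct and follows essentially the same approach as the paper: both reduce to the affine (and quasi-affine, hence algebraic) case, where the result is Corollary~\ref{Hzar=Hrh-colimit_cor}, via a Mayer--Vietoris argument and induction on the size of an affine cover, in direct analogy with the proof of Lemma~\ref{formal_colimit_vanishing--lemma}. The paper's proof is a two-sentence sketch of exactly this strategy; your version is simply more explicit in invoking the five lemma and in spelling out the cofinality of the blow-up index categories.
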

\begin{proof}
This is similar to the proof of Proposition~\ref{formal_colimit_vanishing--lemma}. By a Mayer-Vietoris argument and by induction on the number of affine formal schemes needed to cover $\Xcal$, we can reduce to one degree less. Fortunately, this also works in degree $0$ due to the sheaf condition.
\end{proof}

We now prove Conjecture~\ref{global-main-thm--conj} in almost all cases.

\begin{thm} \label{global-main-thm}
Let $X$ be a quasi-compact and quasi-separated rigid $k$-space of dimension $d$. Assume that $d\geq 2$ or that there exists a formal model which is algebraic (Definition~\ref{algebraic-formal--def}, \eg{} $X$ is affinoid or projective). Then there is an isomorphism 
	\[
	\Kcont_{-d}(X) \cong \Ho^d(X;\Z)
	\]
where the right-hand side is sheaf cohomology with respect to the admissible topology on the category of rigid $k$-spaces.
\end{thm}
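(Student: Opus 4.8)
The plan is to globalize the argument of Theorem~\ref{main-thm} by replacing the affine Zariski--Riemann space $\skp{A^\circ}_A$ with the formal Zariski--Riemann space of a formal model of $X$. Using Raynaud's theorem \cite[\S 8.4]{bosch} I would first choose a formal $k^\circ$-model $\Xcal$ of $X$, taking it to be algebraic (Definition~\ref{algebraic-formal--def}) whenever such a model exists. Every admissible formal blow-up $\Xcal'\to\Xcal$ is then again a formal model of $X$, so $\Xcal'_\eta=X$, and these $\Xcal'$ form a cofiltered category. For each such $\Xcal'$ Definition~\ref{Kcont_on_pi--defin} provides a fibre sequence
\[
\Kcont(\Xcal'\on\pi)\To\Kcont(\Xcal')\To\Kcont(X)
\]
in $\Pro(\Sp^+)$. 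Passing to the colimit over all $\Xcal'$ and using that $\colim_{\Xcal'}\Kcont_{-n}(\Xcal'\on\pi)=0$ for $n=d$ and $n=d+1$ --- which holds by Lemma~\ref{formal_colimit_vanishing--lemma} when $d\geq 2$, and by Corollary~\ref{algebraic_colimit_vanishing--cor} when $\Xcal$ is algebraic and $d\geq 1$ --- the long exact sequence would yield $\Kcont_{-d}(X)\cong\colim_{\Xcal'}\Kcont_{-d}(\Xcal')$. The remaining case $d=0$ I would reduce to Theorem~\ref{main-thm}, since a quasi-compact quasi-separated rigid $k$-space of dimension zero is affinoid.

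Next I would evaluate the right-hand side. As $\Xcal'$ is admissible one has $\dim(\Xcal'/\pi^n)=d$ for every $n$, so the nilinvariance of negative $K$-theory in degrees $\geq d$ (Lemma~\ref{K_-d_is_nil-invariant--lemma}) forces the pro-abelian group $\Kcont_{-d}(\Xcal')=\prolim_n\K_{-d}(\Xcal'/\pi^n)$ to be constant with value $\K_{-d}(\Xcal'/\pi)$. In particular $\Kcont_{-d}(X)$ is a constant pro-abelian group --- the last assertion of Conjecture~\ref{global-main-thm--conj} --- and $\Kcont_{-d}(X)\cong\colim_{\Xcal'}\K_{-d}(\Xcal'/\pi)$.

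It then remains to identify $\colim_{\Xcal'}\K_{-d}(\Xcal'/\pi)$ with $\Ho^d(X;\Z)$, and here I would rerun the tail of the proof of Theorem~\ref{main-thm} with $\skp{\Xcal}=\lim_{\Xcal'}\Xcal'$ in place of $\skp{A^\circ}_A$: the rh-descent spectral sequence (Theorem~\ref{Lrh_K=KH--thm}) together with the vanishing of rh-cohomology above degree $d$ would give $\colim_{\Xcal'}\K_{-d}(\Xcal'/\pi)\cong\colim_{\Xcal'}\Hrh^d(\Xcal'/\pi;\Z)$ (\cf{} Theorem~\ref{K_-d=Hrh^d--thm} and its proof); Lemma~\ref{Hzar=Hrh-formal--lem} would replace this by $\colim_{\Xcal'}\Hzar^d(\Xcal'/\pi;\Z)$; and since sheaf cohomology commutes with cofiltered limits of coherent sober spaces along quasi-compact transition maps \cite[ch.~0, 4.4.1]{fuji-kato} and $\Xcal'/\pi$ is homeomorphic to $\Xcal'$ (Lemma~\ref{formal-algebraic-ZR--lemma}), this equals $\Ho^d(\skp{\Xcal};\Z)$. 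Finally Theorem~\ref{adic_space_is_ZR--thm} identifies $\skp{\Xcal}$ with the adic space $X\ad$ attached to $X$, whose $\Z$-cohomology coincides with sheaf cohomology of $X$ for the admissible topology, so that $\Ho^d(\skp{\Xcal};\Z)\cong\Ho^d(X;\Z)$. Chaining these isomorphisms gives the theorem.

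The step I expect to be the real obstacle is the identification $\colim_{\Xcal'}\K_{-d}(\Xcal'/\pi)\cong\colim_{\Xcal'}\Hrh^d(\Xcal'/\pi;\Z)$: Theorem~\ref{K_-d=Hrh^d--thm} is set up for the affine Zariski--Riemann space, so one must check that its descent-spectral-sequence proof still applies to the formal Zariski--Riemann space $\skp{\Xcal}$ --- concretely, that the colimit of the rh-cohomologies of the special fibres vanishes above degree $d$ and that the relevant $\KH$-groups commute with the limit. I expect this to go through by the same Mayer--Vietoris induction over the size of an affine cover that proves Lemmas~\ref{formal_colimit_vanishing--lemma} and \ref{Hzar=Hrh-formal--lem}, reducing it to the affinoid statement already contained in Theorem~\ref{main-thm}; the secondary nuisances are the low-dimensional cases $d\in\{0,1\}$, where the algebraic-model hypothesis is genuinely used, and the various pro-categorical colimits, which cause no trouble here because the pro-systems in play are all constant.
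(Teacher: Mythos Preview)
Your approach is essentially the same as the paper's, and the argument is correct. The one place where you over-worry is the step $\colim_{\Xcal'}\K_{-d}(\Xcal'/\pi)\cong\colim_{\Xcal'}\Hrh^d(\Xcal'/\pi;\Z)$: Theorem~\ref{K_-d=Hrh^d--thm} is not specific to Zariski--Riemann spaces but holds for any $d$-dimensional noetherian scheme, so it applies directly to each special fibre $\Xcal'/\pi$ \emph{before} taking the colimit --- no Mayer--Vietoris reduction is needed. Incidentally, your indices $n\in\{d,d+1\}$ for the required vanishing of $\colim_{\Xcal'}\Kcont_{-n}(\Xcal'\on\pi)$ are the correct ones (the long exact sequence forces exactly these), and they match what Corollary~\ref{algebraic_colimit_vanishing--cor} and Lemma~\ref{formal_colimit_vanishing--lemma} actually provide.
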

\begin{proof}
Let $\Xcal$ be an admissible formal model of $X$. By Definition~\ref{Kcont_on_pi--defin} there is a fibre sequence 
	\[
	\Kcont(\Xcal\on\pi) \To \Kcont(\Xcal) \To \Kcont(X).
	\]
If $\colim_{\Xcal'} \Kcont_{-n}(\Xcal'\on\pi) = 0$ for $n\in\{d-1,d\}$, then the induced map
	\[
	\colim_{\Xcal'} \,\Kcont_{-d}(\Xcal') \To \Kcont_{-d}(X)
	\]
is an isomorphism; this is the case if $\Xcal$ is algebraic (Corollary~\ref{algebraic_colimit_vanishing--cor}; note that the reducedness assumption does not harm due to Lemma~\ref{K_-d_is_nil-invariant--lemma}) or if $\dim(X)\geq 2$ (Lemma~\ref{formal_colimit_vanishing--lemma}). By Lemma~\ref{K_-d_is_nil-invariant--lemma} and Theorem~\ref{K_-d=Hrh^d--thm} we conclude
	\[
	\Kcont_{-d}(X) \cong \colim_{\Xcal'}\,\Kcont_{-d}(\Xcal) \cong \colim_{\Xcal'}\,\K_{-d}(\Xcal/\pi) \cong \colim_{\Xcal'}\,\Hrh^d(\Xcal/\pi;\Z).
	\]
By Lemma~\ref{Hzar=Hrh-formal--lem} we have that
	\[
	\colim_{\Xcal'}\,\Hrh^*(\Xcal/\pi;\Z) \cong \colim_{\Xcal'}\, \Hzar^*(\Xcal'/\pi;\Z).
	\]
Since every formal scheme is homeomorphic to its special fibre, the latter one identifies with $\colim_{\Xcal'}\,\Ho^d(\Xcal';\Z)$ as sheaf cohomology only depends on the topology. Since the formal Zariski-Riemann space $\skp{\Xcal} = \lim_{\Xcal'}\Xcal'$  is a colimit of coherent and sober spaces with quasi-compact transition maps, it commutes with cohomology \cite[ch.~0, 4.4.1]{fuji-kato}. Hence we conclude that
	\[
	\colim_{\Xcal'}\,\Ho^d(\Xcal';\Z) \cong \Ho^d(\skp{\Xcal};\Z)) \cong \Ho^d(X\ad;\Z) \cong \Ho^d(X;\Z)
	\]
by using Theorem~\ref{adic_space_is_ZR--thm} for the middle isomorphism.
\end{proof}

\begin{remark}
The cases of Conjecture~\ref{global-main-thm--conj} which are not covered by Theorem~\ref{global-main-thm} are curves which are not algebraic. In particular, they must not be affine nor projective nor smooth proper (\cf{} \cite[1.8.1]{luetkebohmert-curves}).
\end{remark}

As the constant sheaf $\Z$ is overconvergent we infer the following.

\begin{cor} \label{global-main-thm-cor}
Let $X$ be a quasi-compact and quasi-separated rigid analytic space of dimension $d$ over a discretely valued field. Assume that $d\geq 2$ or that there exists a formal model of $X$ which is algebraic (\eg{} $X$ is affinoid or projective). Then there is an isomorphism 
	\[
	\Kcont_{-d}(X) \cong \Ho^d(X\berk;\Z)
	\]
where $X\berk$ is the Berkovich space associated with $X$. 
\end{cor}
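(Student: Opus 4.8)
The plan is to deduce this from Theorem~\ref{global-main-thm} together with the comparison between sheaf cohomology of the constant sheaf $\Z$ on an adic space and on the associated Berkovich space, exactly mirroring the affinoid argument of Corollary~\ref{main-thm-cor}. Theorem~\ref{global-main-thm} gives $\Kcont_{-d}(X) \cong \Ho^d(X;\Z)$, and the proof of that theorem already identifies the right-hand side with the sheaf cohomology $\Ho^d(X\ad;\Z)$ of the adic space $X\ad$ associated with $X$ (via $X\ad \cong \skp{\Xcal}$ for a formal model $\Xcal$, Theorem~\ref{adic_space_is_ZR--thm}). So it remains to establish a natural isomorphism $\Ho^*(X\ad;\Z) \cong \Ho^*(X\berk;\Z)$.

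First I would recall the relationship between the adic and Berkovich incarnations: there is a canonical continuous map $\rho\colon X\ad \to X\berk$, and by van der Put--Schneider \cite[\S 5]{put-schneider-points} -- the statement already invoked in Corollary~\ref{main-thm-cor} -- pushforward along $\rho$ restricts to an exact equivalence between the category of \emph{overconvergent} sheaves on an affinoid adic spectrum and the category of all sheaves on the corresponding Berkovich spectrum. Since $X$ is quasi-compact and quasi-separated, $X\ad$ admits a finite admissible cover by affinoid adic subspaces whose pairwise intersections are finite unions of affinoids, and these local equivalences are compatible with restriction; hence they glue to an equivalence between overconvergent sheaves on $X\ad$ and sheaves on $X\berk$, and in particular $R^q\rho_* G = 0$ for $q>0$ whenever $G$ is overconvergent.

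Next I would use that the constant sheaf $\Z$ on $X\ad$ is overconvergent and admits a resolution by overconvergent flasque sheaves (for instance the one pulled back along $\rho$ from a flasque resolution of $\Z$ on $X\berk$, or a Godement-type resolution built from overconvergent skyscrapers at rank-one points). Feeding this into the Leray spectral sequence for $\rho$ and using the vanishing of the higher direct images on overconvergent sheaves yields $\Ho^*(X\ad;\Z) \cong \Ho^*(X\berk;\rho_*\Z) = \Ho^*(X\berk;\Z)$. Combining with the first paragraph gives the desired isomorphism $\Kcont_{-d}(X) \cong \Ho^d(X\berk;\Z)$.

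The only real point to check is that the van der Put--Schneider equivalence, stated for affinoids, glues along an admissible cover to a global equivalence of categories together with the attendant vanishing of $R^q\rho_*$ on overconvergent sheaves; this is the main (and essentially bookkeeping) obstacle. Everything else is either furnished by Theorem~\ref{global-main-thm} -- notably the identification $\Ho^d(X;\Z) \cong \Ho^d(X\ad;\Z)$ -- or is a formal consequence of the local statement and the fact that $\Z$ is overconvergent with an overconvergent flasque resolution.
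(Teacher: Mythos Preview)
Your proposal is correct and follows essentially the same route as the paper: the paper's entire proof is the one-line remark ``As the constant sheaf $\Z$ is overconvergent we infer the following'', implicitly invoking the same van der Put--Schneider comparison already used in Corollary~\ref{main-thm-cor}. You have simply spelled out what that sentence means in the global case (gluing the affinoid equivalence, vanishing of $R^q\rho_*$ on overconvergent sheaves, and the Leray spectral sequence), which the paper leaves implicit.
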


\begin{remark}
If $X\berk$ is smooth over $k$ or the completion of a $k$-scheme of finite type, then there is an isomorphism \cite[III.1.1]{bredon-sheaf}
	\[
	\Ho^d(X\berk;\Z) \cong \Hsing^d(X\berk;\Z)
	\]
with singular cohomology since the Berkovich space $X\berk$ is locally contractible. For smooth Berkovich spaces this is a result of Berkovich \cite[9.1]{berkovich99} and for completions of $k$-schemes of finite type this was proven by Hrushovski-Loeser \cite{hrushovski-loeser}.
\end{remark}

Finally, we prove vanishing and homotopy invariance of continous K-theory in low degrees. The corresponding statement for affinoid algebras was proven by Kerz \cite[Thm.~12]{kerz-icm}.

\begin{thm} \label{Kcont-vanishing--thm}
Let $k$ be a complete discretely valued field and let $X$ be a quasi-compact and quasi-separated rigid $k$-space of dimension $d$. Then:
\begin{enumerate}
\item $\Kcont_{-i}(X)=0$ for $i>d$.
\item The canonical map $\Kcont_{-i}(X) \to \Kcont_{-i}(X\times\mathbf{B}_k^n)$ is an isomorphism for $i\geq d$ and $n\geq 1$ where $\mathbf{B}_k^n:=\Spm(k\skp{t_1,\ldots,t_n})$ is the rigid unit disc.
\end{enumerate}
\end{thm}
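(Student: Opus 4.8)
Both statements will be pulled back to the special fibres of formal models and to the Zariski--Riemann space, where the algebraic theorems of Kerz--Strunk--Tamme \cite{kst} and the affinoid case of Kerz \cite[Thm.~12]{kerz-icm} are available. If $X=\emptyset$ there is nothing to prove, so assume $d\ge 0$. Fix a formal model $\Xcal$ of $X$ (Raynaud): a quasi-compact admissible formal $k^\circ$-scheme with $\Xcal_\eta\cong X$, and for every admissible formal blow-up $\Xcal'\to\Xcal$ the schemes $\Xcal'/\pi^m$ are noetherian of dimension $d$.

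\textbf{Part (i).} By \cite[Thm.~B]{kst} we have $\K_{-i}(\Xcal'/\pi^m)=0$ for $i>d$, hence $\Kcont_{-i}(\Xcal')=\prolim_m\K_{-i}(\Xcal'/\pi^m)=0$ for $i>d$. Feeding this into the long exact sequence of the fibre sequence $\Kcont(\Xcal'\on\pi)\to\Kcont(\Xcal')\to\Kcont(X)$ (Definition~\ref{Kcont_on_pi--defin}) gives, for $i>d$, a natural isomorphism $\Kcont_{-i}(X)\cong\Kcont_{-i-1}(\Xcal'\on\pi)$. Passing to the colimit over all $\Xcal'$ and applying Lemma~\ref{formal_colimit_vanishing--lemma} (which yields $\colim_{\Xcal'}\Kcont_{-n}(\Xcal'\on\pi)=0$ for $n\ge 2$, applicable since $i+1\ge 2$) gives $\Kcont_{-i}(X)=0$. (When $d=0$ one may alternatively use Corollary~\ref{algebraic_colimit_vanishing--cor}.)

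\textbf{Part (ii).} Here I would compare the fibre sequence above for $\Xcal'$ with the analogous one for the product model $\Xcal'\times\Spf k^\circ\skp{t_1,\dots,t_n}$, whose special fibres are $(\Xcal'/\pi^m)\times\mathbf{A}^n$, pass to the colimit over $\Xcal'$, and invoke the five lemma. The middle terms become isomorphic in all degrees $-j$ with $j\ge d$ by the homotopy invariance of negative $K$-theory for the $d$-dimensional noetherian schemes $\Xcal'/\pi^m$ (statement (ii) above, \cite{kst}); combined with \cite[Thm.~B]{kst} this already forces $\Kcont_{-i}(X\times\mathbf{B}_k^n)=0$ for $i>d$, which with (i) settles the range $i>d$. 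For the boundary degree $i=d$ one must control the cofibre of the induced map on the $K$-theory-with-support terms: using the globalised form of Theorem~\ref{ZR_K-theory_support_g-theory--thm} (so that $\colim_{\Xcal'}\K(\Xcal'\on\pi)$ is computed by the coherent sheaves on the special fibre of the Zariski--Riemann space), homotopy invariance of $G$-theory, and Raynaud--Gruson flattening (which moreover makes the analogous $K$-theoretic terms for the $\mathbf{A}^n$-family of Zariski--Riemann special fibres agree with those of $\skp{\Xcal}$ itself), this cofibre is identified, up to shift, with the reduced $K$-theory of the generic fibre along the $\mathbf{A}^n$-direction. For an algebraic model $\Xcal=\hat Y$ the generic fibre is the honest $k$-scheme $Y_k$ of dimension $d$, to which \cite{kst} applies verbatim, so the reduced term $\mathrm{cofib}(\K(Y_k)\to\K(Y_k\times\mathbf{A}^n))$ vanishes in degrees $\le -d$; the five lemma then gives $\Kcont_{-d}(X)\cong\Kcont_{-d}(X\times\mathbf{B}_k^n)$. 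The general case, and ultimately the affinoid case which is \cite[Thm.~12]{kerz-icm}, is reached by a Mayer--Vietoris argument over an affine formal cover, exactly as in the proofs of Lemma~\ref{formal_colimit_vanishing--lemma} and Lemma~\ref{Hzar=Hrh-formal--lem} (the multi-intersections of an affine formal cover are quasi-affine, hence algebraic).

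\textbf{The main obstacle.} The delicate point is the boundary degree $i=\dim X$. Unlike in part (i), Lemma~\ref{formal_colimit_vanishing--lemma} only kills the $K$-theory-with-support terms in degrees $\le -2$, so the degree $-d$ part of the comparison cannot be handled by pure vanishing; it has to be extracted from the $G$-theoretic description of $K$-theory on the Zariski--Riemann space (Theorem~\ref{ZR_K-theory_support_g-theory--thm}) and the homotopy invariance of $G$-theory, and the dimension bookkeeping must be kept tight throughout the Mayer--Vietoris reduction to the affinoid case.
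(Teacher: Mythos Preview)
Your argument for part~(i) is essentially the paper's: the same fibre sequence, the same appeal to \cite[Thm.~B]{kst} for the vanishing of $\Kcont_{-i}(\Xcal')$, and the same use of Lemma~\ref{formal_colimit_vanishing--lemma} to kill the support term in the colimit.

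For part~(ii), however, the paper takes a much shorter route than you do. The key observation you are missing is that the map $\Kcont_{-i}(X)\to\Kcont_{-i}(X\times\mathbf{B}_k^n)$ is \emph{split injective} (a retraction is induced by any $k$-point of $\mathbf{B}_k^n$). Hence it suffices to show that the cokernel
\[
N^{\mathrm{cont}}_{-i,n}(X)\;:=\;\coker\bigl(\Kcont_{-i}(X)\to\Kcont_{-i}(X\times\mathbf{B}_k^n)\bigr)
\]
vanishes for $i\ge d$. The paper then simply reruns the argument of part~(i) with the functor $\Kcont$ replaced by its ``$N$-part'' $N\Kcont$: in the resulting exact sequence the middle term $\colim_{\Xcal}N\Kcont_{-i}(\Xcal)\cong\colim_{\Xcal}N\K_{-i}(\Xcal/\pi)$ vanishes for $i\ge d$ by nilinvariance (Lemma~\ref{K_-d_is_nil-invariant--lemma}) together with the $\mathbf{A}^1$-invariance part of \cite[Thm.~B]{kst} applied to the $d$-dimensional schemes $\Xcal/\pi$, and the support term $\colim_{\Xcal}N\Kcont_{-i-1}(\Xcal\on\pi)$ vanishes by the $N$-analogue of Lemma~\ref{formal_colimit_vanishing--lemma}. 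In particular the boundary degree $i=d$ causes no extra trouble, precisely because \cite[Thm.~B]{kst}(ii) already gives $\mathbf{A}^1$-invariance at $i=d$ and not only for $i>d$.

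Your proposed detour through Theorem~\ref{ZR_K-theory_support_g-theory--thm}, homotopy invariance of $G$-theory, and a Mayer--Vietoris reduction to algebraic models is therefore unnecessary; the paper even remarks in the Leitfaden that section~\ref{sec-k-theory-zr} (where Theorem~\ref{ZR_K-theory_support_g-theory--thm} lives) is not needed for the main results. What your approach would buy is an independent handle on the support term that does not rely on an $N$-variant of Kerz's vanishing \cite[Prop.~7]{kerz-icm}; but the paper is content to treat that variant as following ``analogously'', and indeed the same platification-par-\'eclatement mechanism that proves \cite[Prop.~7]{kerz-icm} works relative to the extra $\mathbf{A}^n$-factor.
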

\begin{proof}
Let $i\geq 1$. We have an exact sequence
\[
\colim_{\Xcal}\Kcont_{-i}(\Xcal\on\pi) \to \colim_{\Xcal}\Kcont_{-i}(\Xcal) \to \Kcont_{-i}(X) \to \colim_{\Xcal}\Kcont_{-i-1}(\Xcal\on\pi)
\] 
where $\Xcal$ runs over all admissible formal models of $X$. The last term in the sequence vanishes due to Lemma~\ref{formal_colimit_vanishing--lemma}. For $i>d$, we have $\Kcont_{-i}(\Xcal) = \K_{-i}(\Xcal/\pi) = 0$ by Lemma~\ref{K_-d_is_nil-invariant--lemma} and vanishing of algebraic K-theory as $\dim(\Xcal/\pi) = d$. This shows (i).
For (ii) this works analogously for $N^\mathrm{cont}_{-i,n}(X) := \coker\bigl( \Kcont_{-i}(X) \to \Kcont_{-i}(X\times\mathbf{B}_k^n) \bigr)$ which is enough as the map in question is injective.
\end{proof}

	\appendix	
	\section{The rh-topology}
	\label{sec-rh-topology}

In this section we examine the rh-topology introduced by Goodwillie-Lichtenbaum \cite[1.2]{goodwillie-lichtenbaum}. We use a different definition in terms of abstract blow-up squares and show that both definitions agree (Corollary~\ref{rh-covers-agree--cor}). In the end, we will prove some rh-versions of known results for the cdh-topology; most importantly, that the rh-sheafification of K-theory is KH-theory (Theorem~\ref{Lrh_K=KH--thm}).

\begin{notation*}
Every scheme in this section is noetherian of finite dimension. Under these circumstances, a birational morphism is an isomorphism over a dense open subset of the target \stacks{01RN}.
\end{notation*}
	
\begin{defin} \label{abs--defin}
An \df{abstract blow-up square} is a cartesian diagram of schemes
	\[ \tag{abs}
	\xymatrix{
	E \ar[d] \ar[r] & \tilde X \ar[d] \\
	Z  \ar[r] & X
	}
	\]
where $Z\to X$ is a closed immersion, $\tilde X\to X$ is proper, and the induced morphism $\tilde X \setminus E \to X \setminus Z$ is an isomorphism. 
\end{defin}

\begin{defin} \label{rh-cdh-topology--defin}
Let $S$ be a noetherian scheme. The \df{rh-topology} on $\Sch_S$ is the topology generated by Zariski squares and covers $\{Z\to X, \tilde{X}\to X\}$ for every abstract blow-up square (abs) as well as the empty cover of the empty scheme.
The \df{cdh-topology} (completely decomposed h-topology) is the topology generated by Nisnevich squares and abstract blow-up squares as well as the empty cover of the empty scheme.
\end{defin}

\begin{remark} \label{rh-cdh--remark}
The cdh-topology relates to the Nisnevich topology in the same way as the rh-topology relates to the Zariksi topology. Thus a lot of results in the literature concerning the cdh-topology are also valid for the rh-topology. Possible occurences of the Nisnevich topology may be substituted by the Zariski topology. Hence the same proofs apply almost verbatim by exchanging only the terms ``cdh'' by ``rh'', ``Nisnevich'' by ``Zariksi'', and ``\etale{} morphism'' by ``open immersion''. 
\end{remark}

\begin{remark}
Our definition of an abstract blow-up square coincides with the one given in \cite{kst}. Other authors demand instead the weaker condition that the induced morphism $(\tilde X\setminus E)\red \to (X\setminus Z)\red$ on the associated reduced schemes is an isomorphism, e.g. \cite[Def.~12.21]{mvw01}. Indeed, both notions turn out to yield the same topology. To see this, first note that the map $X\red\to X$ is a rh-cover since
	\[
	\xymatrix{
	\emptyset \ar[d] \ar[r] & \emptyset \ar[d] \\
	X\red  \ar[r] & X
	}
	\]
is an abstract blow-up square in the sense of Definition~\ref{abs--defin} since $X\red\to X$ is a closed immersion, $\emptyset\to X$ is proper, and the induced map on the complements $\emptyset\to\emptyset$ is an isomorphism. Now we consider the following situation as indicated in the diagram
	\[
	\xymatrix@R=2.5ex@C=1.2ex{ 
	&&& \tilde X\red \ar [dl] \ar'[d][dd] && \ar[ll] (\tilde X_U)\red \ar[dl] \ar[dd]^\cong \\
	&& \tilde X \ar[dd] && \ar[ll] \tilde X_U \ar[dd] \\
	& Z\red \ar[dl] \ar'[r][rr] && X\red \ar[dl] && \ar'[l][ll]  \ar[dl] U\red \\
	Z \ar[rr] && X && \ar[ll] U  
	}
	\]
where $U := X\setminus Z$ and $\tilde X_U := \tilde X \times_X U$. The morphism $Z \sqcup \tilde X\to X$ is a cover in the sense of \cite{mvw01} but not a priori in the sense of Definition~\ref{rh-cdh-topology--defin}. However, it can be refined by the composition $Z\red \sqcup \tilde X\red \to X\red\to X$ in which both maps are rh-covers.
\end{remark}

\begin{defin} \label{nis-lifting-prop--def}
A morphism of schemes $\tilde{X} \to X$ satisfies the \df{Nisnevich lifting property} iff every point $x\in X$ has a preimage $\tilde{x}\in \tilde{X}$ such that the induced morphism $\kappa(x)\to\kappa(\tilde{x})$ on residue fields is an isomorphism.
\end{defin}

\begin{lemma} \label{generic-iso--lem}
Let $p\colon \tilde{X}\to X$ be a proper map satisfying the Nisnevich lifting property and assume $X$ to be reduced. Then there exists a closed subscheme $X'$ of $\tilde{X}$ such that the restricted map $p|_{X'} \colon X'\to X$ is birational.
\end{lemma}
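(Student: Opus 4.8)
The plan is to take $X'$ to be the reduced closure inside $\tilde X$ of a suitable finite set of preimages of the generic points of $X$, and then to verify birationality pointwise at those generic points, invoking the convention recalled at the start of this section that a birational morphism is automatically an isomorphism over a dense open subset of the target.

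First, since $X$ is noetherian and reduced, it has finitely many irreducible components, with generic points $\eta_1,\dots,\eta_n$, and $\O_{X,\eta_i}=\kappa(\eta_i)$ for each $i$. Using the Nisnevich lifting property I would choose for each $i$ a point $\tilde\eta_i\in p\inv(\eta_i)$ such that the induced map $\kappa(\eta_i)\to\kappa(\tilde\eta_i)$ is an isomorphism, and then let $X'\subseteq\tilde X$ be the reduced induced closed subscheme supported on the closure $\overline{\{\tilde\eta_1,\dots,\tilde\eta_n\}}$.

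The key step is to identify the generic points of $X'$. If $\tilde\eta_i$ lay in $\overline{\{\tilde\eta_j\}}$, then by continuity of $p$ the point $\eta_i$ would lie in $\overline{\{\eta_j\}}$; since the $\eta_\ell$ are generic points of irreducible components of $X$, their closures are maximal irreducible closed subsets, and so this forces $i=j$. Hence the closed sets $\overline{\{\tilde\eta_i\}}$ are pairwise incomparable, so they are exactly the irreducible components of $X'$ and the $\tilde\eta_i$ are exactly its generic points; in particular $p|_{X'}$ maps the generic points of $X'$ bijectively onto those of $X$. In a small affine neighbourhood of $\tilde\eta_i$ the scheme $X'$ agrees with the integral scheme $\overline{\{\tilde\eta_i\}}$ with its reduced structure, whose local ring at its generic point is its function field $\kappa(\tilde\eta_i)$; thus $\O_{X',\tilde\eta_i}=\kappa(\tilde\eta_i)$, and the map $\O_{X,\eta_i}\to\O_{X',\tilde\eta_i}$ induced by $p|_{X'}$ identifies with the residue field extension $\kappa(\eta_i)\to\kappa(\tilde\eta_i)$, which is an isomorphism by the choice of $\tilde\eta_i$. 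Since $p|_{X'}$ induces a bijection on generic points together with an isomorphism of local rings at each of them, it is birational; moreover $X'$ is closed in $\tilde X$, so $p|_{X'}$ is even proper over $X$.

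The only genuinely delicate point is the middle one — checking that the chosen $\tilde\eta_i$ really are the generic points of $X'$ and that none of them specialises to another — but, as indicated, this follows at once from the maximality of irreducible components together with the continuity of $p$. Everything else is routine bookkeeping with local rings at generic points of reduced schemes.
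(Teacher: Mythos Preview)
Your proof is correct and follows essentially the same approach as the paper's: for each generic point $\eta_i$ of $X$ choose a preimage $\tilde\eta_i$ with trivial residue field extension, and let $X'$ be the union of the reduced closures $\overline{\{\tilde\eta_i\}}$. The paper argues component-by-component that each $\overline{\{\tilde\eta_i\}}\to\overline{\{\eta_i\}}$ is birational (using properness to get surjectivity onto the component) and then takes the union, while you form $X'$ first and then identify its generic points; your extra care in checking that no $\tilde\eta_i$ specialises to another is a point the paper leaves implicit.
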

\begin{proof}
Let $\eta$ be a generic point of $X$. By assumption, there exists a point $\tilde{\eta}$ of $\tilde{X}$ mapping to $\eta$. Since $p$ is a closed map, we have $p\bigl(\,\overline{\{\tilde{\eta}\}}\,\bigr) \supset \overline{p(\{\tilde{\eta}\})} = \overline{\{\eta\}}$ and hence equality holds. Thus the restriction $\overline{\{\tilde{\eta}\}}\to \overline{\{\eta\}}$ is a  morphism between reduced and irreducible schemes inducing an isomorphism on the stalks of the generic points, hence it is birational. Thus setting $X'$ to be the (finite) union of all $\overline{\{\tilde{\eta}\}}$ for all generic points $\eta$ of $X$ does the job.
\end{proof}

\begin{lemma}[{\cite[2.18]{voevodsky-unstable}}]
A proper map is an rh-cover if and only if it satisfies the Nisnevich lifting property.
\end{lemma}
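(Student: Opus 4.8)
The plan is to prove the two implications separately; the forward implication is a formal closure argument, and the reverse one a noetherian induction on dimension built on Lemma~\ref{generic-iso--lem}.

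For ``rh-cover $\Rightarrow$ Nisnevich lifting property'', I would call a family $\{U_i\to X\}_i$ \emph{completely decomposed} if for every point $x\in X$ there is an index $i$ and a point $u\in U_i$ lying over $x$ with $\kappa(x)\to\kappa(u)$ an isomorphism. One checks that the completely decomposed families form a pretopology on $\Sch_S$: they contain the identity covers, they are stable under base change (if $\kappa(u)=\kappa(x)$ then the fibre of $U_i\times_XY\to Y$ over a point $y$ above $x$ contains a point with residue field $\kappa(y)$), and they are closed under composition (compose the liftings and their residue-field isomorphisms). Moreover all the generators of the rh-topology are completely decomposed: a Zariski cover clearly is; an abstract blow-up cover $\{Z\to X,\tilde X\to X\}$ is, because a point of $X$ either lies in $Z$, where it lifts tautologically, or in $X\setminus Z$, where it lifts through the isomorphism $\tilde X\setminus E\to X\setminus Z$; and the empty cover of the empty scheme is vacuously one. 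Hence the topology generated by the completely decomposed families contains every generating cover of the rh-topology, so it contains the rh-topology; consequently every rh-covering sieve already contains a completely decomposed family. Applying this to the sieve generated by a proper rh-cover $p\colon\tilde X\to X$ yields a completely decomposed family $\{V_j\to X\}_j$ each member of which factors through $p$; chasing the residue-field isomorphisms along $V_j\to\tilde X\to X$ then shows that $p$ is itself completely decomposed, i.e.\ satisfies the Nisnevich lifting property.

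For the converse, let $p\colon\tilde X\to X$ be proper with the Nisnevich lifting property. Since $X\red\to X$ is an rh-cover (Remark~\ref{rh-cdh--remark}), since the Nisnevich lifting property passes to the base change $\tilde X\times_XX\red\to X\red$, and since any morphism through which an rh-cover factors is itself an rh-cover, it suffices to treat the case $X$ reduced, and then argue by induction on $\dim X$. By Lemma~\ref{generic-iso--lem} there is a closed subscheme $X'\subseteq\tilde X$ with $p|_{X'}\colon X'\to X$ birational, hence (by the convention at the start of this section) an isomorphism over a dense open $U\subseteq X$. Setting $Z:=(X\setminus U)\red$, one has $\dim Z<\dim X$ because $U$ meets every irreducible component of $X$, and the pair $\{Z\to X,\,X'\to X\}$ is an abstract blow-up cover, hence an rh-cover. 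Now $X'\to X$ factors through $p$, and $\tilde X\times_XZ\to Z$ is proper with the Nisnevich lifting property, so by the inductive hypothesis it is an rh-cover; composing it with $\{Z\to X,\,X'\to X\}$ therefore produces a covering sieve all of whose generators factor through $p$. Thus the sieve generated by $p$ contains a covering sieve, so $p$ is an rh-cover. The base case $\dim X\le 0$ is immediate, since then $Z=\emptyset$, $X'\cong X$, and $p$ admits a section, so that $\mathrm{id}_X$ lies in the sieve it generates.

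The main obstacle I anticipate is bookkeeping rather than any real difficulty: one must verify carefully that the Nisnevich lifting property is inherited by the base changes $\tilde X\times_XX\red\to X\red$ and $\tilde X\times_XZ\to Z$ (both reducing to the fact that a point with trivial residue extension pulls back to a point with trivial residue extension, since $\kappa(\tilde x)=\kappa(x)$ forces the relevant fibre to be $\Spec\kappa(\tilde x)$), and one must handle cleanly the passage from ``a family refines $p$'' to ``the sieve generated by $p$ is covering''. Alternatively, the forward implication can be shortened by observing that the rh-topology is coarser than the cdh-topology, so that an rh-cover is a cdh-cover and one may invoke the cdh-statement of \cite[2.18]{voevodsky-unstable} directly (cf.\ Remark~\ref{rh-cdh--remark}).
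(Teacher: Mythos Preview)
Your argument is correct. The paper itself does not give a proof of this lemma at all: it simply records the statement with a citation to Voevodsky \cite[2.18]{voevodsky-unstable}, relying on Remark~\ref{rh-cdh--remark} that cdh-proofs transfer verbatim to the rh-setting. So there is no ``paper's own proof'' to compare against in detail.

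That said, your write-up is a genuine, self-contained proof rather than a citation, and it is organised exactly along the lines one would extract from Voevodsky's argument. A couple of observations: your forward implication via the completely-decomposed pretopology is the standard one; the alternative you mention at the end (rh $\subseteq$ cdh, then quote the cdh-statement) is strictly speaking what the paper is doing by citing \cite[2.18]{voevodsky-unstable}. Your reverse implication is pleasant because it reuses the paper's own Lemma~\ref{generic-iso--lem} as the engine of the induction, so it integrates well with the surrounding text; this is essentially the same induction that underlies Voevodsky's proof. The only thing I would tighten is the phrasing around ``any morphism through which an rh-cover factors is itself an rh-cover'' in the reduction to $X$ reduced: what you actually need (and use) is that if a refinement of $p$ is a cover then $p$ is a cover, together with stability of covers under base change---both immediate from the sieve definition, but worth stating in that order.
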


\begin{defin}
A \df{proper rh-cover} is a proper map which is also an rh-cover, \ie{} a proper map satisfying the Nisnevich lifting property. By Lemma~\ref{generic-iso--lem}, every proper rh-cover of a reduced scheme has a refinement by a proper birational rh-cover.
\end{defin}

\begin{cor} \label{rh-covers-agree--cor}
The rh-topology equals the topology which is generated by Zariski covers and by proper rh-covers.
\end{cor}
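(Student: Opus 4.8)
The inclusion of the rh-topology into the topology generated by Zariski covers and proper rh-covers is the easy half. The rh-topology is generated by Zariski covers, by the empty cover of $\emptyset$, and, for each abstract blow-up square (abs), by $\{Z\to X,\tilde X\to X\}$; so it suffices to observe that this last cover is a proper rh-cover. Indeed $Z\sqcup\tilde X\to X$ is proper ($Z\to X$ being a closed immersion and $\tilde X\to X$ proper), and it has the Nisnevich lifting property since a point of $X$ lying in $Z$ lifts to $Z$ with trivial residue extension, while a point outside $Z$ lifts along the isomorphism $\tilde X\setminus E\to X\setminus Z$. For the reverse inclusion I must show that every proper rh-cover (a proper map with the Nisnevich lifting property) generates a covering sieve for the rh-topology; since Zariski covers are rh-covers by construction, that will be enough.

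The plan for the main point is an induction on $\dim X$ (legitimate as every scheme here is finite-dimensional), proving the statement first for reduced $X$. So let $X$ be reduced and let $p\colon\tilde X\to X$ be a proper rh-cover. By Lemma~\ref{generic-iso--lem} there is a closed subscheme $X'\subseteq\tilde X$ with $p|_{X'}\colon X'\to X$ birational, hence --- being proper and birational onto a reduced target --- an isomorphism over a dense open $U\subseteq X$. Put $Z:=(X\setminus U)\red$, a closed subscheme with $\dim Z<\dim X$. Then
	\carre{X'\times_X Z}{X'}{Z}{X}
is an abstract blow-up square, so $\{Z\to X,X'\to X\}$ is an rh-cover; as $X'\to X$ factors through $p$, the coarser cover $\{Z\to X,\tilde X\to X\}$ is an rh-cover too.

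It remains to delete $Z$, which I would do via the local-character (transitivity) axiom of Grothendieck topologies, applied to the rh-cover $\{\tilde X\to X,Z\to X\}$ and the sieve $R$ on $X$ generated by $\{\tilde X\to X\}$: the restriction of $R$ along $\tilde X\to X$ is the maximal sieve, which covers, and the restriction of $R$ along $Z\hookrightarrow X$ is the sieve generated by the projection $\tilde X\times_X Z\to Z$, which is a proper rh-cover of the reduced scheme $Z$ of strictly smaller dimension and hence covers by the induction hypothesis. Therefore $R$ covers $X$, i.e. $p$ is an rh-cover; the base case $\dim X\le 0$ is contained in the same argument, since there $U=X$, $Z=\emptyset$ and $X'\cong X$. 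Finally, the general (possibly non-reduced) case follows because $X\red\to X$ is an rh-cover and the base change $\tilde X\times_X X\red\to X\red$ of a proper rh-cover is again a proper rh-cover, so the composite $\tilde X\times_X X\red\to X\red\to X$ is an rh-cover; as it factors through $p$, the sieve generated by $p$ contains an rh-covering sieve and is therefore one.

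The geometric heart of the argument --- extracting an abstract blow-up square from an arbitrary proper rh-cover --- is precisely Lemma~\ref{generic-iso--lem}, which is already available; the main work that remains is the sieve-theoretic bookkeeping around the local-character axiom, together with the routine checks that properness and the Nisnevich lifting property are stable under base change, so that ``proper rh-cover'' descends to the closed subschemes appearing in the induction.
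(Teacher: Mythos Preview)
Your argument is correct, but it does substantially more work than the paper does. In the paper the corollary is stated without proof, because it is immediate from the preceding lemma (cited from Voevodsky, \cite[2.18]{voevodsky-unstable}): a proper map is an rh-cover if and only if it satisfies the Nisnevich lifting property. Given that lemma, one direction of the corollary is a tautology (proper rh-covers are rh-covers by definition), and the other direction is the easy observation you made first (the map $Z\sqcup\tilde X\to X$ coming from an abstract blow-up square is proper and has the Nisnevich lifting property, hence is a proper rh-cover).

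What you have actually done in your ``hard direction'' is reprove the non-trivial implication of Voevodsky's lemma---that a proper map with the Nisnevich lifting property generates an rh-covering sieve---by induction on $\dim X$, using Lemma~\ref{generic-iso--lem} to extract a birational closed subscheme and then the local-character axiom to peel off the lower-dimensional complement. This is essentially the standard proof of that lemma, and it is entirely valid. The upshot is that your write-up is self-contained where the paper relies on a citation; the price is length, and the gain is that the reader sees exactly why Lemma~\ref{generic-iso--lem} is the geometric input making the corollary work.
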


\begin{defin} \label{rh-excision-discrete--def}
We say that a set-valued presheaf $F$ satisifes \df{rh-excision} iff for every abstract blow-up square (abs) as in Definition~\ref{abs--defin} the induces square
	\carre{F(X)}{F(Z)}{F(\tilde{X})}{F(E)}
is a pullback square.
\end{defin}

\begin{prop}
A Zariski sheaf is an rh-sheaf if and only if it satisfies rh-excision.
\end{prop}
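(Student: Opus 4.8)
The plan is to deduce the statement from Voevodsky's theory of cd-structures together with the reductions already recorded in this appendix. The rh-topology is the join of the Zariski topology and the topology associated with the cd-structure whose distinguished squares are the abstract blow-up squares~(abs); since a presheaf is a sheaf for a join of topologies precisely when it is a sheaf for each factor, an rh-sheaf is the same thing as a Zariski sheaf that is moreover a sheaf for this ``blow-up'' cd-topology. Because a Zariski sheaf $F$ automatically has $F(\emptyset)=\{*\}$ (the empty family already covers $\emptyset$ Zariski-locally), the proposition is therefore equivalent to: for such $F$, being a sheaf for the blow-up cd-topology is the same as satisfying rh-excision. As flagged in Remark~\ref{rh-cdh--remark}, this is the word-for-word rh-analogue of the familiar statement for the cdh-topology, and Voevodsky's argument~\cite{voevodsky-unstable} goes through under our standing Noetherian, finite-dimensional hypotheses (which is where boundedness of the cd-structure is used). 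What follows describes the two implications and where the work lies.

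For ``rh-sheaf $\Rightarrow$ rh-excision'' I would start from an abstract blow-up square~(abs). By definition $\{Z\to X,\ \tilde X\to X\}$ is an rh-cover, so $F(X)$ is the equaliser of the \Cech{} nerve of $W:=Z\sqcup\tilde X\to X$. The only fibre products that enter are $Z\times_XZ=Z$ (a closed immersion is a monomorphism), $Z\times_X\tilde X=\tilde X\times_XZ=E$ (by definition of $E$), and $\tilde X\times_X\tilde X$. For the last one I would exploit that $\tilde X\to X$ is an isomorphism over $X\setminus Z$, so that the complement of the diagonal inside $\tilde X\times_X\tilde X$ maps into $E$ under both projections; this makes the projection $\tilde X\times_X\tilde X\to\tilde X$ fit into another abstract blow-up square (the ``regularity'' of the cd-structure), and feeding that square back into the equaliser collapses it to the claimed pullback $F(X)=F(Z)\times_{F(E)}F(\tilde X)$.

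For the converse I would assume $F$ is a Zariski sheaf satisfying rh-excision and verify descent along an arbitrary rh-cover. By Corollary~\ref{rh-covers-agree--cor} the rh-topology is generated by Zariski covers and proper rh-covers; by Lemma~\ref{generic-iso--lem} a proper rh-cover of a reduced scheme is refined by a proper birational one, and by Lemma~\ref{hcdh=cdh--lemma} such a cover is refined by one coming from an honest blow-up square, hence by an abstract blow-up square. Thus every rh-covering sieve is refined by a sieve manufactured out of iterated Zariski covers and abstract blow-up squares, and the remaining task is to propagate descent from these generating squares to the whole rh-topology. I would do this by Noetherian induction on $X$: split off the dense open locus over which the given cover becomes a Zariski cover, apply rh-excision to the blow-up square along the (smaller-dimensional) closed complement, and invoke the inductive hypothesis there, combining everything via the Zariski sheaf property.

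The main obstacle is precisely this last step: turning ``$F$ has descent for the generating squares'' into ``$F$ is a sheaf for the topology they generate'' requires knowing the induction on closed subschemes actually terminates and controls all rh-covers — this is exactly the boundedness of the blow-up cd-structure in Voevodsky's sense. Rather than re-deriving it, the cleanest write-up quotes the cd-structure formalism (\cite{voevodsky-unstable}, transported along Remark~\ref{rh-cdh--remark}) once the elementary refinements above have been recorded.
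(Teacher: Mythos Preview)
Your proposal is correct and takes essentially the same approach as the paper: the paper's proof consists of the single sentence ``analogous to the proof of the corresponding statement for the Nisnevich topology \cite[12.7]{mvw01}, \cf{} Remark~\ref{rh-cdh--remark}'', and what you have written is precisely an unpacking of that analogous argument (with the citation shifted from MVW to Voevodsky's cd-structure papers, which amounts to the same thing). One minor remark: your invocation of Lemma~\ref{hcdh=cdh--lemma} is a forward reference in the paper's ordering, but since that lemma does not depend on the present proposition there is no circularity, and in any case your final paragraph correctly observes that the cleanest route is simply to quote the cd-structure formalism once the generating covers have been identified.
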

\begin{proof}
The proof is analogous to the proof of the corresponding statement for the Nisnevich topology \cite[12.7]{mvw01}, \cf{} Remark~\ref{rh-cdh--remark}.
\end{proof}

\begin{defin} \label{hrh-hcdh-topology--defin}
The \df{hrh-topology} (honest rh-topology) (\resp{} the \df{hcdh-topology}) is the topology generated by honest blow-up squares
	\[
	\xymatrix{
	E \ar[d] \ar[r] & \Bl_Z(X) \ar[d] \\
	Z  \ar[r] & X
	}
	\]
and Zariski squares (\resp{} Nisnevich squares) as well as the empty cover of the empty scheme.
\end{defin}

\begin{lemma} \label{hcdh=cdh--lemma}
Let $S$ be a noetherian scheme. On $\Sch_S$ the hrh-topology equals the rh-topology and the hcdh-topology equals the cdh-topology.
\end{lemma}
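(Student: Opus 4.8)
The plan is to prove the two inclusions of Grothendieck topologies separately; only one direction is substantial.

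For the inclusion of the hrh-topology into the rh-topology (and of the hcdh-topology into the cdh-topology) I would simply observe that every honest blow-up square is an abstract blow-up square in the sense of Definition~\ref{abs--defin}, since $\Bl_Z(X) \to X$ is proper and an isomorphism over $X \setminus Z$; that every Zariski square is a Nisnevich square; and that the empty cover of the empty scheme generates all four topologies in sight. Hence every hrh-covering (resp. hcdh-covering) sieve is an rh-covering (resp. cdh-covering) sieve.

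The content is the reverse inclusion (the cdh case being identical with ``Zariski'' replaced by ``Nisnevich'' throughout, as in Remark~\ref{rh-cdh--remark}). Since the rh-topology is generated by Zariski squares --- which are already hrh-generators --- together with abstract blow-up squares, it suffices to show that the sieve generated by a single abstract blow-up cover $\{Z \to X, \tilde X \to X\}$ contains an hrh-covering sieve. First I would reduce to the case where $X$ is reduced (the morphism $X\red \to X$ is an hrh-cover: it is the non-empty member of the honest blow-up cover with centre $X\red$, whose blow-up is empty, as the nilradical consists of nilpotents) and where $U := X \setminus Z$ is schematically dense in $X$ (irreducible components of $X$ contained in $Z$ are already hit by $Z \to X$, and closed covers of reduced schemes are abstract blow-up covers, hence hrh-refinable). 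The key step is then to apply Raynaud--Gruson's \emph{platification par \'eclatement} \cite[5.2.2]{raynaud-gruson} to the finite-type morphism $\tilde X \to X$, obtaining a $U$-admissible honest blow-up $q \colon X' = \Bl_W(X) \to X$, with $W \subseteq Z$, along which the strict transform $\tilde X'$ of $\tilde X$ is flat over $X'$. Since $\tilde X' \to X'$ is proper (it is closed in $\tilde X \times_X X'$), flat, and an isomorphism over the schematically dense open $q^{-1}(U) \cong U$, it is an isomorphism. Then $X' \cong \tilde X' \hookrightarrow \tilde X \times_X X' \to \tilde X$ realizes $q$ as a morphism over $\tilde X \to X$, so both members of the honest blow-up cover $\{W \to X, \Bl_W(X) \to X\}$ factor through members of $\{Z \to X, \tilde X \to X\}$; hence the hrh-covering sieve generated by the former is contained in the sieve generated by the latter, which is therefore hrh-covering.

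The hard part will be the rigidity input: that a proper flat morphism which is an isomorphism over a schematically dense open subscheme of a reduced noetherian base is itself an isomorphism. I would prove this by noting that flatness forces every irreducible component of the source to dominate a component of the target, so that (by upper semicontinuity of fibre dimension together with properness) all fibres are finite; the morphism is then finite locally free of generic rank one, hence an isomorphism. Alternatively one may first refine $\tilde X$ through Chow's lemma so that $\tilde X' \to X'$ becomes projective and invoke constancy of the Hilbert polynomials of its fibres --- though Chow's lemma shrinks the locus over which things are isomorphisms, so this route needs the preliminary reduction to $U$ dense to be set up with some care. The remaining reductions (to reduced base and to schematically dense $U$) are routine given Remark~\ref{rh-cdh--remark}, and everything else is formal bookkeeping with covering sieves and the functoriality of blow-ups, exactly as in the proof of Proposition~\ref{closed-cdh--prop}.
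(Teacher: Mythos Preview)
Your overall architecture matches the paper's: the easy inclusion is dispatched identically, and both arguments reduce to showing that a single abstract blow-up cover $\{Z \to X,\ \tilde X \to X\}$ over a reduced $X$ can be refined by an hrh-cover. The substantive step is handled differently. The paper first reduces to $X$ \emph{irreducible} by iteratively blowing up along irreducible components (so that the honest blow-up cover $\{X_n \to X,\ \Bl_{X_n}(X) \to X\}$ peels off one component at a time), and then simply cites Temkin's flattening lemma \cite[Lem.~2.1.5]{temkin08}: any $U$-modification is dominated by a $U$-admissible blow-up. You instead invoke Raynaud--Gruson directly and supply the rigidity argument that a proper flat morphism which is an isomorphism over a dense open is an isomorphism. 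This is in effect an inline proof of Temkin's lemma, so your route is more self-contained (and uses only a reference already present in the paper) at the cost of extra length; the paper's route is shorter but defers the real work to a black box.

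Two points in your write-up deserve tightening. First, your reduction to ``$U$ schematically dense'' asserts that closed covers of reduced schemes are ``hrh-refinable''; as stated this is circular, since that is essentially the content of the lemma. What makes it go through is precisely the paper's iterated component blow-up: the honest blow-up at a component $X_n$ has $\Bl_{X_n}(X_n)=\emptyset$, so the resulting hrh-cover $\{X_n \to X,\ \Bl_{X_n}(X) \to X\}$ reduces the number of components, and one inducts. You should either spell this out or simply adopt the paper's reduction to irreducible $X$, after which $U$ is automatically dense. Second, your rigidity argument appeals to ``upper semicontinuity of fibre dimension'', but upper semicontinuity alone goes the wrong way (specialisations can only increase the local fibre dimension). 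What you need is that for a flat morphism locally of finite presentation the relative dimension is also \emph{lower} semicontinuous, hence locally constant; from there your conclusion (quasi-finite, hence finite, hence finite locally free of rank one, hence an isomorphism) is correct.
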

\begin{proof}
Every hrh-cover is an rh-cover. We have to show conversely that every rh-cover can be refined by an hrh-cover. Let $X\in\Sch_S$. It suffices to show that a cover coming from an abstract blow-up square over $X$ can be refined by an hrh-cover. As $\Bl_{X\red}(X)=\emptyset$, the map $X\red\to X$ is an hrh-cover. Hence we can assume that $X$ is reduced since pullbacks of abstract blow-up squares are abstract blow-up squares again. Let $X = X_1 \cup\ldots\cup X_n$ be the decomposition into irreducible components. For a closed subscheme $Z$ of $X$ one has
	\[
	\Bl_Z(X) = \Bl_Z(X_1) \cup\ldots\cup \Bl_Z(X_n).
	\]
If $Z=X_n$ , then $\Bl_{X_n}(X_n)=\emptyset$ and $\Bl_{X_n}(X_i)$ is irreducible for $i\in\{1,\ldots,n-1\}$ \cite[Cor.~13.97]{gw10}. By iteratively blowing up along the irreducible components, we can hence reduce to the case where $X$ is irreducible. Let
	\carre{E}{\tilde X}{Z}{X}
be an abstract blow-up square. As $X$ is irreducible, the complement $U:=X\setminus Z$ is schematically dense in $X$. As $p\colon \tilde X\to X$ is proper and birational, also $\tilde X$ is irreducible and $p\inv(U)$ is schematically dense in $\tilde X$. Thus $p$ is a $U$-modification and a result of Temkin tells us that there exists a $U$-admissible blow-up factoring over $\tilde X$ \cite[Lem.~2.1.5]{temkin08}.\footnote{Even though most parts of Temkin's article \cite{temkin08} deal with characteristic zero, this is not the case for the mentioned result.}
That means that there exists a closed subscheme $Z'$ of $X$ which lies in $X\setminus U=Z$ such that $\Bl_{Z'}(X)\to X$ factors over $\tilde X\to X$. Thus the rh-cover $\{Z\to X,\tilde X\to X\}$ can be refined by the hrh-cover $\{Z'\to X,\Bl_{Z'}(X)\to X\}$ which was to be shown. The second part has the same proof.
\end{proof}

\begin{prop} \label{rh-cover_factorisation--prop}
Let $S$ be a noetherian scheme and let $X\in\Sch_S$. Every rh-cover of $X$ admits a refinement of the form $U \to[f] \tilde X \to[p] X$ where $f$ is a Zariski cover and $p$ is a proper rh-cover.
\end{prop}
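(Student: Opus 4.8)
The plan is to use that, by Lemma~\ref{hcdh=cdh--lemma}, the rh-topology is generated by Zariski covers and honest blow-up covers, and then to ``rewrite'' such a presentation of an rh-cover of $X$ so as to push all Zariski covers towards the covering object and coalesce all the blow-ups into a single proper rh-cover of $X$.

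First I would record two stability facts. An honest blow-up cover $Z\sqcup\Bl_Z(Y)\to Y$ is a proper rh-cover: it is proper, and it has the Nisnevich lifting property because a point of $Z$ lifts to itself and a point of $Y\setminus Z$ lifts through the isomorphism over $Y\setminus Z$. Moreover, the proper rh-covers and the Zariski covers are each closed under composition and under base change; for the proper rh-covers one uses that properness and the Nisnevich lifting property pass to composites and that rh-covers are stable under base change (passing, for the blow-up part of a base change, to a refinement via the canonical map $\Bl_{Z\times_YY'}(Y')\to\Bl_Z(Y)\times_YY'$). Hence, by Lemma~\ref{hcdh=cdh--lemma} and the construction of a generated topology, every rh-cover of $X$ is refined by a finite composite $X=Y_0\leftarrow Y_1\leftarrow\dots\leftarrow Y_m$ in which each $Y_{j+1}\to Y_j$ is a Zariski cover or an honest blow-up cover.

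The crucial step is a local rewriting: a two-term composite consisting of a Zariski cover $Y'=\bigsqcup_iY_i\to Y$ followed by honest blow-up covers $Z_i\sqcup\Bl_{Z_i}(Y_i)\to Y_i$ on the pieces is refined by a composite of the opposite shape, a proper rh-cover $\tilde Y\to Y$ followed by a Zariski cover. For this I would pass to the scheme-theoretic closures $\bar Z_i\subseteq Y$ of the locally closed subschemes $Z_i$, which satisfy $\bar Z_i\cap Y_i=Z_i$, and take for $\tilde Y\to Y$ the successive composite of the abstract blow-up covers $\bar Z_i\sqcup\Bl_{\bar Z_i}(Y)\to Y$, which is a proper rh-cover by the stability facts. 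Since blow-up commutes with the flat base change $Y_i\hookrightarrow Y$, the pullback of $\tilde Y\to Y$ along $Y_i\hookrightarrow Y$ refines the honest blow-up cover of $Y_i$; therefore $\tilde Y\times_YY'=\bigsqcup_i(\tilde Y\times_YY_i)\to\tilde Y$ is a Zariski cover whose composite with $\tilde Y\to Y$ refines the given two-term composite. Finally I would run a termination argument: to a word in the two symbols ``Zariski'' and ``blow-up'' assign the number of pairs $j<k$ for which the $j$-th symbol is ``Zariski'' and the $k$-th is ``blow-up''; as long as this is positive an adjacent pair ``Zariski'' then ``blow-up'' occurs, applying the rewriting there (and pulling the later covers back, which preserves their types up to refinement) strictly decreases it, and when it reaches $0$ the word consists of finitely many blow-up covers followed by finitely many Zariski covers. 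Composing the blow-up part to a single proper rh-cover $p\colon\tilde X\to X$ and the Zariski part to a single Zariski cover $f\colon U\to\tilde X$ gives the required refinement $U\xrightarrow{f}\tilde X\xrightarrow{p}X$.

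I expect the main obstacle to be the rewriting step, and within it the careful verification that extending the centres by scheme-theoretic closure and blowing them up in turn really produces, after the intervening flat base change, a simultaneous refinement of all the honest blow-up covers on the pieces; this rests on the identity $\bar Z_i\cap Y_i=Z_i$, on compatibility of blow-ups with flat base change, and on keeping the disjoint-union bookkeeping straight when several centres $\bar Z_i$ are blown up one after another. (This proposition is the rh-analogue of the corresponding cdh-statement; \cf{} Remark~\ref{rh-cdh--remark}.)
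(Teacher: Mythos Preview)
The paper does not actually give a proof here; it refers to the cdh analogue in Suslin--Voevodsky and in Mazza--Voevodsky--Weibel and invokes Remark~\ref{rh-cdh--remark}. Those references argue by noetherian induction on $X$ rather than by a swap-and-sort procedure, so your proposal is a genuinely different (and more combinatorial) route. The overall strategy --- reduce via Lemma~\ref{hcdh=cdh--lemma} to finite composites of Zariski and honest blow-up covers, then commute the two types past each other using scheme-theoretic closure of centres and compatibility of blow-up with flat base change --- is sound.

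There is, however, a real gap in your termination. Your rewriting step turns an adjacent pair (Zariski, honest blow-up) into (proper rh-cover, Zariski); the new symbol is \emph{not} an honest blow-up, so after one swap the word lives in a three-letter alphabet $\{Z,B,P\}$, while your inversion count only tracks $Z$-before-$B$ pairs. Concretely, starting from $ZBZB$ and swapping at positions $(1,2)$ and then $(3,4)$ yields $PZPZ$, which has zero $Z$-before-$B$ inversions but is not of the form ``proper covers then Zariski covers''; to proceed you would need to swap an adjacent $ZP$, which your rewriting step does not cover. The fix is to note that your $\tilde Y\to Y$ is in fact a fibre product of honest blow-up covers, and that the very same closure-and-blow-up argument works when each piece $Y_i$ carries such a fibre product (extend \emph{all} centres $Z_{ij}$ to $\bar Z_{ij}\subseteq Y$ and take the fibre product over $Y$). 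With the swap stated for this slightly larger, swap-stable class, treating $P$ as $B$ in the count gives the standard bubble-sort measure and the argument terminates. Alternatively, replace the inversion count by a straight induction on the length of the composite.
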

\begin{proof}
The proof is analogous to the corresponding result for the cdh-topology due to Suslin-Voevodsky \cite[5.9]{sv00} or the proof given in \cite[12.27,12.28]{mvw01}, \cf{} Remark~\ref{rh-cdh--remark}. 
\end{proof}

The following theorem and its proof are just rh-variants of the corresponding statement for the cdh-topology by Kerz-Strunk-Tamme \cite[6.3]{kst}. The theorem goes back to Haesemeyer \cite{haesemeyer04}. Another recent proof for the cdh-topology which also works for the rh-topology was recently given by Kelly-Morrow \cite[3.4]{kelly-morrow}. 

\begin{notation*}
Let $S$ be a scheme and let $\mathrm{Sh}_\Ab(\Sch_S^\rh)$ be the category of rh-sheaves  on $\Sch_S$ with values in abelian groups. Its inclusion into the category $\mathrm{PSh}_\Ab(\Sch_S)$ of presheaves on $\Sch_S$ with values in abelian groups admits an exact left adjoint $\arh$. Similarly, the inclusion $\Sh_\Sp(\Sch_S^\rh) \inj \PSh_\Sp(\Sch_S)$ of rh-sheaves on $\Sch_S$ with values in the $\infty$-category of spectra $\Sp$ admits an exact left adjoint $\Lrh$.
\end{notation*}

\begin{thm} \label{Lrh_K=KH--thm}
Let $S$ be a finite-dimensional noetherian scheme. Then the canonical maps of rh-sheaves with values in spectra on $\Sch_S$
	\[
	\Lrh\K_{\geq 0} \To \Lrh\K \To \KH
	\]
are equivalences.
\end{thm}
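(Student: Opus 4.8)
The plan is to transfer the proof of the cdh-statement \cite[6.3]{kst} (equivalently, the argument of Kelly-Morrow \cite[3.4]{kelly-morrow}) to the rh-topology by means of the dictionary of Remark~\ref{rh-cdh--remark}: systematically replace ``cdh'' by ``rh'', ``Nisnevich'' by ``Zariski'', and ``\etale{} morphism'' by ``open immersion''. Throughout, $\KH$ denotes Weibel's homotopy $\K$-theory \cite{weibel-htpy}; it is $\A^1$-invariant and a Nisnevich --- hence Zariski --- sheaf of spectra by \cite{tt90}, and the canonical map $\K\to\KH$ factors through the connective cover $\K_{\geq 0}$, which produces the displayed chain.

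First I would establish that $\KH$ is already an rh-sheaf, so that $\Lrh\KH\simeq\KH$. As $\KH$ is a Zariski sheaf and $S$ is noetherian of finite dimension, the cd-structure formalism for the rh-topology --- a Zariski sheaf satisfying excision for abstract blow-up squares is an rh-sheaf (\cf{} Definition~\ref{rh-excision-discrete--def} and the proposition following it, in its evident spectrum-valued form; see also \cite{voevodsky-unstable}) --- reduces this to the fact that $\KH$ carries abstract blow-up squares to pullback squares. This ``blow-up excision'' for $\KH$ is precisely the rh-translation of the input of \cite{kst}: it follows from the Kerz-Strunk-Tamme pro-descent theorem for $\K$ along admissible blow-ups, whose proof --- in the circle of ideas of \cite{kerz-icm} and Raynaud-Gruson's \emph{platification par \'eclatement} \cite[5.2.2]{raynaud-gruson} used repeatedly above --- carries over verbatim to the Zariski-based setting, since it nowhere uses a property of the Nisnevich topology not shared by the Zariski topology. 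In characteristic zero this step is classical, going back to Haesemeyer \cite{haesemeyer04}, and follows from resolution of singularities.

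It then remains to check that $\K\to\KH$ and $\K_{\geq 0}\to\K$ become equivalences after applying $\Lrh$. Since $\Lrh$ is exact it preserves fibre sequences, so it is enough that $\fib(\K\to\KH)$ and $\fib(\K_{\geq 0}\to\K)$ --- the latter with homotopy groups $\K_{-i}$ for $i\geq 1$ --- have trivial rh-sheafification. Both presheaves vanish on regular noetherian schemes (the first because $\K\simeq\KH$ there by \cite{tt90}, the second because the negative $\K$-theory of a regular noetherian ring vanishes). In characteristic zero resolution of singularities makes every object of $\Sch_S$ rh-locally regular, hence kills both; in general no reduction to the regular case is available, but the rh-translation of the Kerz-Strunk-Tamme argument \cite{kst} --- which proceeds through pro-descent rather than through regular models --- still gives $\Lrh\fib(\K\to\KH)=0$ and $\arh\K_{-i}=0$ for every $i\geq 1$, the latter forcing $\Lrh\fib(\K_{\geq 0}\to\K)=0$ via the rh-descent spectral sequence. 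Together with the previous step this yields the asserted equivalences $\Lrh\K_{\geq 0}\simeq\Lrh\K\simeq\KH$.

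The only genuine point to verify is therefore that no step of the cdh-proof --- the pro-descent theorem for $\K$ and the blow-up excision for $\KH$ --- uses a property of the Nisnevich topology that fails for the Zariski topology; this is exactly the content of Remark~\ref{rh-cdh--remark}, so the whole argument transfers with no essential change, which is why I would present the statement as a translation rather than as a new theorem. I would record it here because it feeds, through the rh-descent spectral sequence identifying $\K$ with $\KH$ rh-locally, into the proof of Theorem~\ref{K_-d=Hrh^d--thm}.
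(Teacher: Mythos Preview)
Your strategy is correct and close to the paper's, but the paper is more explicit about the two concrete vanishing inputs that make the argument work, whereas you defer both to ``the rh-translation of \cite{kst}'' without naming them. The paper isolates a general killing lemma (Lemma~\ref{Lrh=0--lemma}): if a presheaf $F$ of abelian groups vanishes on reduced zero-dimensional schemes and every class on a reduced affine $X$ dies on some proper birational $X'\to X$, then $\arh F=0$. The two applications are $F=\K_i$ for $i<0$ (the Kerz--Strunk vanishing \cite{ks16}) and $F=N\K_i$ (the blow-up killing of $N\K$ from \cite[Prop.~6.4]{kst}); see Example~\ref{Lrh_K_-i--example}. With these in hand, the first equivalence follows immediately on stalks, and the second is deduced from the spectral sequence $E^1_{p,q}=\arh N^p\K_q\Rightarrow\arh\KH_{p+q}$ rather than by directly analysing $\fib(\K\to\KH)$. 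Your sentence ``the rh-translation of the Kerz--Strunk--Tamme argument \ldots{} still gives $\Lrh\fib(\K\to\KH)=0$'' is exactly where this content lives, and a referee would ask you to unpack it; the $N\K$ spectral sequence is the clean way to do so.

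Two minor points. First, the map $\K\to\KH$ does not factor through the connective cover; the displayed chain comes from the separate maps $\K_{\geq 0}\to\K$ and $\K\to\KH$. Second, your first step (that $\KH$ is already an rh-sheaf) is true --- it follows from cdh-descent for $\KH$, which is finer --- but the paper does not argue this way; it shows $\Lrh\K\simeq\Lrh\KH$ via the $N\K$ vanishing and only uses $\KH$ being an rh-sheaf implicitly when interpreting the target. Either route is fine, but be aware that invoking ``blow-up excision for $\KH$ follows from pro-descent for $\K$'' is essentially the content of \cite[6.3]{kst} again, so you should cite it as an input (e.g.\ via Cisinski or \cite{kst}) rather than re-derive it inside this proof.
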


In the proof of the theorem we will make use of the following lemma.

\begin{lemma} \label{Lrh=0--lemma}
Let $S$ be a finite dimensional noetherian scheme and let $F$ be a presheaf of abelian groups on $\Sch_S$. Assume that
\begin{enumerate}
\item for every reduced affine scheme $X$ and every element $\alpha\in F(X)$ there exists a proper birational morphism $X'\to X$ such that $F(X)\to F(X')$ maps $\alpha$ to zero and that
\item $F(Z)=0$ if $\dim(Z)=0$ and $Z$ is reduced.
\end{enumerate} 
Then $\arh F =0$.
\end{lemma}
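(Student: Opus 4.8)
The plan is to show that $\arh F = 0$ by proving that for every $X \in \Sch_S$ and every section $\alpha \in F(X)$, the image of $\alpha$ in $(\arh F)(X)$ vanishes, which by the explicit description of the rh-sheafification (a section dies in $\arh F$ iff it is locally killed after passing to an rh-cover) amounts to finding an rh-cover trivializing $\alpha$. Since the map $X\red \to X$ is an rh-cover and $F$ factors through reduced schemes in the relevant sense (or at least $\arh F$ does, by the remark following Definition~\ref{rh-cdh-topology--defin}), I may assume $X$ is reduced; by Zariski-locality and the fact that affine opens form a Zariski cover, I may further assume $X$ is affine and reduced.

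The core of the argument is a noetherian induction on $\dim(X)$. The base case $\dim(X) = 0$ is exactly hypothesis (ii): $F(X) = 0$, so there is nothing to do. For the inductive step, let $\alpha \in F(X)$ with $\dim(X) = d > 0$. By hypothesis (i), there is a proper birational morphism $p\colon X' \to X$ with $p^*\alpha = 0$ in $F(X')$. Since $p$ is proper and birational with $X$ reduced, Lemma~\ref{generic-iso--lem} (via the Nisnevich lifting property, using Corollary~\ref{rh-covers-agree--cor}) lets me arrange that $p$ is a proper rh-cover — more precisely, after refining, $X' \to X$ is an rh-cover and there is a dense open $U \subseteq X$ over which $p$ is an isomorphism, with closed complement $Z := X \setminus U$ of dimension $< d$. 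Then $\{Z \hookrightarrow X, X' \to X\}$ is an rh-cover of $X$ (an abstract blow-up square). On $X'$ the section $\alpha$ already dies; on $Z$, which is reduced of dimension $< d$, the restriction $\alpha|_Z$ dies in $\arh F$ by the inductive hypothesis, i.e.\ there is an rh-cover $\{Z_j \to Z\}_j$ with $\alpha|_{Z_j} = 0$ in $F(Z_j)$. Composing, the rh-cover $\{X' \to X\} \cup \{Z_j \to X\}_j$ trivializes $\alpha$, so $\alpha \mapsto 0$ in $(\arh F)(X)$.

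Since this holds for all $X \in \Sch_S$ and all $\alpha$, and since $\arh$ is exact so $(\arh F)(X)$ is computed as the sections of the associated sheaf, every section of $F$ becomes zero in $\arh F$; as $\arh F$ is generated as an rh-sheaf by the images of sections of $F$, we conclude $\arh F = 0$.

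The main obstacle I anticipate is making the reduction steps fully rigorous: one must ensure the noetherian induction is well-founded (every scheme in $\Sch_S$ is noetherian of finite dimension since $S$ is, so $\dim$ takes values in a bounded set of nonnegative integers, and closed subschemes have strictly smaller dimension exactly when they are proper — care is needed that $Z$ is a \emph{proper} closed subset, which follows from $U$ being dense and nonempty when $X \neq \emptyset$), and that restricting to reduced affine schemes in hypothesis (i) suffices because rh-covers of a general $X$ can be refined to the form in Proposition~\ref{rh-cover_factorisation--prop} and affine opens together with $X\red \to X$ reduce an arbitrary $X$ to the reduced affine case. One must also be slightly careful that "dies in $\arh F$" is genuinely equivalent to "dies on some rh-cover": this is the standard description of sheafification for a topology with enough covers (the $+$-construction applied twice), and it is exactly what makes the inductive bookkeeping go through.
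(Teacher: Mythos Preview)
Your proof is correct and follows essentially the same route as the paper: reduce to reduced affine $X$, use hypothesis (i) to kill $\alpha$ on a proper birational $X'\to X$, then handle the complement $Z$ of the isomorphism locus by induction on dimension, and conclude via the abstract blow-up rh-cover $\{Z\to X,\,X'\to X\}$. The paper phrases the final step slightly more efficiently by noting that vanishing of $F(X)\to(\arh F)(X)$ for affine $X$ already forces all rh-stalks of $\arh F$ to be zero, which spares you the ``generated by images of sections'' argument; also, the detour through Lemma~\ref{generic-iso--lem} and Corollary~\ref{rh-covers-agree--cor} is unnecessary, since $\{Z\hookrightarrow X,\,X'\to X\}$ is an rh-cover directly from the definition of an abstract blow-up square.
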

\begin{proof}
It suffices to show that for every affine scheme $X$ the map
	\[ 
	F(X) \To \arh F(X)
	\]
vanishes as this implies that the rh-stalks are zero.
Consider the diagram
	\carre{F(X)}{\arh F(X)}{F(X\red)}{\arh F(X\red).}
As $X\red \to X$ is an rh-cover, the right vertical map is an isomorphism. Thus we can assume that $X$ is reduced. For any $\alpha\in F(X)$ there exists, by condition (i), a proper birational morphism $f\colon X'\to X$ such that $\alpha$ maps to zero in $F(X')$. Let $U$ be an open dense subscheme of $X$ over which $f$ is an isomorphism and set $Z := (X\setminus U)\red$. Then $\dim(Z) < \dim(X)$ as $U$ is dense. By condition (ii) we can argue by induction that $\alpha$ maps to zero in $\arh F(Z)$. Since $X'\sqcup Z \to X$ is an rh-cover by construction, $\alpha$ vanishes on some rh-cover of $Y$. Thus the map $F(X) \to \arh F(X)$ maps every element to zero which finishes the proof.
\end{proof}

\begin{example} \label{Lrh_K_-i--example}
\textbf{(i)}
For $i<0$, the functor $F=\K_i$ satisfies the conditions of Lem\-ma~\ref{Lrh=0--lemma}. Zero-dimensional reduced schemes are regular and hence their negative K-theory vanishes, and condition (i) was proven by Kerz-Strunk \cite[Prop.~5]{ks16}.

\textbf{(ii)} Another example is the functor $F = N\K_i$ for $i\in\Z$ which is defined by $N\K_i(X) := \K_i(\mathbf{A}^1_X) / \K_i(X)$. The K-theory of regular schemes is homotopy invariant, and condition (i) was proven by Kerz-Strunk-Tamme \cite[Prop.~6.4]{kst}.
\end{example}

The following proposition is just a recollection from the literature which will be used in the proof of Theorem~\ref{Lrh_K=KH--thm}.

\begin{prop}[Voevodsky, Asok-Hoyois-Wendt] \label{rh-hypercomplete--prop}
Let $S$ be a noetherian scheme of finite dimension. Then:
\begin{enumerate}
 \item The \inftop{} $\Sh^\rh(\Sch_S)$ of space-valued rh-sheaves on $\Sch_S$ is hypercomplete.
 \item The \infcat{} $\Sh^\rh_\Sp(\Sch_S)$ of spectrum-valued rh-sheaves on $\Sch_S$ is left-complete.
 \item A map in $\Sh^\rh_\Sp(\Sch_S)$ is an equivalence if and only if it is an equivalence on stalks.
\end{enumerate} 
\end{prop}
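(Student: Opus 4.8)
The plan is to treat all three assertions as formal consequences of the fact that the rh-topology is defined by a cd-structure of Voevodsky's ``nice'' type, transporting to the rh-setting the arguments he developed for the cdh-topology as licensed by Remark~\ref{rh-cdh--remark}. By Lemma~\ref{hcdh=cdh--lemma} the rh-topology is the topology associated with the cd-structure obtained by combining the Zariski cd-structure (elementary Zariski squares) with the blow-up cd-structure (abstract, equivalently honest, blow-up squares). For part~(i), first I would recall that the Zariski cd-structure is complete, regular, and bounded (with respect to the Krull-dimension density structure), and that the blow-up cd-structure on finite-dimensional noetherian schemes is likewise complete, regular, and bounded; these are precisely Voevodsky's verifications for the cdh cd-structure with ``Nisnevich'' systematically replaced by ``Zariski'' (\cf{} Remark~\ref{rh-cdh--remark}), and it is exactly here that the hypothesis $\dim S<\infty$ enters, namely in the boundedness argument. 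Since a union of complete (resp. regular, resp. bounded) cd-structures again has that property, the rh cd-structure is complete, regular, and bounded; by the general machinery --- \cite[Thm.~3.2.5]{ahw1}, going back to \cite{voevodsky-unstable} and used in the same way in \cite[\S 6]{kst} --- the $\infty$-topos $\Sh^\rh(\Sch_S)$ is then hypercomplete (and, as a by-product, an rh-sheaf is the same as a Zariski sheaf satisfying rh-excision in the sense of Definition~\ref{rh-excision-discrete--def}).

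For part~(ii) I would deduce left-completeness of $\Sh^\rh_\Sp(\Sch_S)$ purely formally from~(i): in a hypercomplete $\infty$-topos $\Xcal$ every object is the limit of its Postnikov tower, and it is a general fact (\cite[\S 1.3]{sag}, \cf{} \cite{ha}) that the stable $\infty$-category $\Sp(\Xcal)$ of spectrum objects of such an $\infty$-topos is left-complete, i.e. the canonical map $\Ecal\to\lim_n\tau_{\leq n}\Ecal$ is an equivalence for every $\Ecal$. Applying this with $\Xcal=\Sh^\rh(\Sch_S)$ and using the identification $\Sp(\Sh^\rh(\Sch_S))\simeq\Sh^\rh_\Sp(\Sch_S)$ gives the claim.

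For part~(iii) the ``only if'' direction is immediate, so the content is the converse. Given a map $f\colon\Ecal\to\Fcal$ of rh-sheaves of spectra which is an equivalence on all rh-stalks, I would set $\Ccal:=\cofib(f)$ and show it vanishes. By left-completeness (ii) it suffices to see $\tau_{\leq n}\Ccal\simeq 0$ for all $n$, and since a truncated object of an $\infty$-topos all of whose homotopy sheaves vanish is contractible, it suffices to show that each homotopy sheaf $\pi_m\Ccal$ --- a sheaf of abelian groups --- is zero. Here the one genuinely external input is that the rh-topos has enough points: the rh-topology is generated by finite covers on the essentially small category $\Sch_S$ of separated finite-type $S$-schemes (which has finite limits and whose objects are quasi-compact and quasi-separated), so the rh-topos is coherent and Deligne's completeness theorem applies; alternatively one cites the point computation of \cite{gabber-kelly}, in the spirit of the analogous statement for the biZariski site earlier in the text. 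Since filtered colimits are exact, $(\pi_m\Ccal)_x=\pi_m(\Ccal_x)$, which is $0$ because $\Ccal_x=\cofib(f_x)\simeq 0$ by hypothesis; as there are enough points, $\pi_m\Ccal=0$, whence $\Ccal\simeq 0$ and $f$ is an equivalence.

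I do not expect a conceptual obstacle here --- this is a recollection from the literature --- but the step requiring real care is the due diligence in part~(i): confirming that Voevodsky's completeness, regularity, and boundedness verifications for the cdh cd-structure really do go through verbatim for the Zariski-plus-blow-up cd-structure, with the boundedness argument (which invokes the density structure and the hypothesis $\dim S<\infty$) being the least mechanical point; together with pinning down a clean reference for ``the rh-topos has enough points'' needed in part~(iii).
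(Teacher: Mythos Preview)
Your argument for (i) is exactly the paper's: the rh cd-structure is complete, regular, and bounded, whence hypersheaves and sheaves coincide by combining Voevodsky's criterion with \cite[Thm.~3.2.5]{ahw1}. For (ii) there is a slip: hypercompleteness of an \inftop{} does \emph{not} in general guarantee that Postnikov towers converge (equivalently that $\Sp(\Xcal)$ is left-complete); these are genuinely distinct conditions in Lurie's terminology. What actually gives Postnikov completeness here is that the rh-topos has finite homotopy dimension, and this follows from the \emph{boundedness} of the cd-structure you already established in (i). The paper's own proof is equally terse at this step and also appeals only to (i), so the slip is harmless in context, but you should not cite hypercompleteness alone as the input. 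For (iii) you are in fact more careful than the paper: the paper simply writes that Postnikov completeness ``implies (ii) and (iii)'' and never addresses why stalks suffice, whereas you correctly observe that one needs the rh-topos to have enough points---via Deligne's theorem for coherent topoi, or via the explicit description of rh-points in \cite{goodwillie-lichtenbaum}---to pass from ``equivalence on stalks'' to ``isomorphism on homotopy sheaves'', after which left-completeness finishes.
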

\begin{proof}
 The rh-topology is induced by a cd-structure \cite[Def.~2.1]{voevodsky-cd} which is complete, regular, and bounded \cite[Thm.~2.2]{voevodsky-unstable}. Hence a space-valued presheaf is a hypercomplete rh-sheaf if and only if it is rh-excisive; this follows from \cite[Lem.~3.5]{voevodsky-cd}. On the other hand, as the cd-structure is complete and regular, a space-valued presheaf is an rh-sheaf if and only if it is rh-excisive; this follows from \cite[Thm.~3.2.5]{ahw1}. Together this implies (i), \cf{} \cite[Rem.~3.2.6]{ahw1}.
The \infcat{} $\Sh^\rh_\Sp(\Sch_S)$ is equivalent to the \infcat{} $\Sh^\rh_\Sp(\Sh^\rh(\Sch_S))$ of sheaves of spectra on the \inftop{} $\Sh^\rh(\Sch_S)$ \cite[1.3.1.7]{sag}. Hence the \inftop{} $\Sh^\rh_\Sp(\Sch_S)$ of connective objects is Postnikov complete. As we can write every object $F$ as the colimit $\colim_{n\in\N} F_{\geq -n}$ of objects which are (up to a shift) connective, this implies (ii) and (iii).
\end{proof}

\begin{proof}[Proof of Theorem~\ref{Lrh_K=KH--thm}]
As the \inftop{} of space-valued sheaves on $\Sch_S^\rh$ is hypercomplete, we can test the desired equivalences on stalks (Proposition~\ref{rh-hypercomplete--prop}). Since spheres are compact, taking homotopy groups commutes with filtered colimits and we can check on the sheaves of homotopy groups of the stalks whether the maps are equivalences. Thus the first equivalence follows directly by applying Lemma~\ref{Lrh=0--lemma} and Example~\ref{Lrh_K_-i--example}~(i) and since the connective cover has isomorphic non-negative homotopy groups.

For the second equivalence we assume for a moment the existence of a weakly convergent spectral sequence 
	\[
	E^1_{p,q} = \arh N^p\K_q \Rightarrow \arh \KH_{p+q}
	\]
in $\Shh_\Ab(\Sch_S^\rh)$. It suffices to show that $\arh N^p\K_q =0$ for $p\geq 1$ which follows from Lemma~\ref{Lrh=0--lemma} and Example~\ref{Lrh_K_-i--example}~(ii). Thus the proof is finished by the following lemma.
\end{proof}

\begin{lemma}
There is a weakly convergent spectral sequence
	\[
	E^1_{p,q} = \arh N^p\K_q \Rightarrow \arh \KH_{p+q}
	\]
of rh-sheaves of abelian groups on $\Sch_X$.
\end{lemma}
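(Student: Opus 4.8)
The plan is to produce the spectral sequence first at the level of presheaves of spectra on $\Sch_S$ and then to sheafify, exploiting that $\arh$ is exact.

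First I would recall that $\KH$, as a presheaf of spectra on $\Sch_S$, is the geometric realization of the simplicial presheaf $[n]\mapsto\K\bigl((\_)\times\Delta^n\bigr)$, where $\Delta^n=\Spec\bigl(\Z[t_0,\dots,t_n]/(t_0+\dots+t_n-1)\bigr)\cong\mathbf{A}^n$. Its skeletal filtration gives a tower of presheaves of spectra
\[
\mathrm{sk}_0\KH \To \mathrm{sk}_1\KH \To \mathrm{sk}_2\KH \To \cdots
\]
with $\colim_p\mathrm{sk}_p\KH\simeq\KH$ and successive cofibres $\mathrm{sk}_p\KH/\mathrm{sk}_{p-1}\KH\simeq\Sigma^p N^p\K$, where $N\K=\cofib\bigl(\K\to\K(\mathbf{A}^1_{(\_)})\bigr)$ — a direct summand of $\K(\mathbf{A}^1_{(\_)})$ — and $N^p\K$ denotes the $p$-fold iterate. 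The identification of this associated graded is the usual Eilenberg--Zilber comparison between the simplex and the cube (via $\Delta^n\cong\mathbf{A}^n\cong(\mathbf{A}^1)^{\times n}$), carried out without any homotopy-invariance input. Passing to homotopy-group presheaves turns the tower into an exact couple of presheaves of abelian groups on $\Sch_S$, hence a spectral sequence
\[
E^1_{p,q}=N^p\K_q \Rightarrow \KH_{p+q}
\]
of presheaves, whose abutment is filtered exhaustively by the images of $\pi_{p+q}(\mathrm{sk}_\bullet\KH)$; this is Weibel's homotopy-K-theory spectral sequence read off objectwise.

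Next I would apply the rh-sheafification $\arh\colon\PShh_\Ab(\Sch_S)\to\Shh_\Ab(\Sch_S^\rh)$. Because $\arh$ is exact it sends the exact couple above to an exact couple of rh-sheaves and commutes with all the derived pages of the associated spectral sequence; because it is a left adjoint it commutes with the filtered colimit computing $\KH$, so the abutment of the sheafified sequence is $\arh\KH_{p+q}$, still carrying an exhaustive filtration. This gives the asserted weakly convergent spectral sequence
\[
E^1_{p,q}=\arh N^p\K_q \Rightarrow \arh\KH_{p+q}
\]
of rh-sheaves of abelian groups on $\Sch_S$.

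The only real work is at the presheaf level, and it is classical: identifying the graded pieces of the skeletal filtration with $\Sigma^p N^p\K$ (rather than merely with the normalised complex of $q$-th K-groups of $\Delta^\bullet$) requires the Eilenberg--Zilber/cube argument, and the convergence is only \emph{weak} because $N^p\K_q\neq 0$ for $q<0$ makes the filtration unbounded, so one is in the setting of Boardman-style conditional convergence rather than strong convergence. Sheafifying creates no new difficulty, since exactness of $\arh$ preserves both the $E^1$-page and the weak convergence. As recorded in Remark~\ref{rh-cdh--remark}, this is the verbatim rh-analogue of the cdh spectral sequence of Kerz--Strunk--Tamme, so their argument applies line for line with ``cdh'' replaced by ``rh''.
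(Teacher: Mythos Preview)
Your proof is correct and follows essentially the same route as the paper: build the spectral sequence first at the presheaf level (the paper simply cites Weibel \cite[IV.12.3]{weibel} for this, while you spell out the skeletal filtration of the simplicial spectrum), then rh-sheafify. The only cosmetic difference is in justifying that sheafification preserves the spectral sequence and its weak convergence: you argue directly from exactness of $\arh$ and its being a left adjoint, whereas the paper checks on rh-stalks, using that stalks are filtered colimits and that filtered colimits of weakly convergent spectral sequences are weakly convergent. These two justifications are equivalent---your categorical argument is precisely what makes the stalkwise check work---so there is no substantive divergence.
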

\begin{proof}
For every ring $R$ there is a weakly convergent spectral sequence \cite[IV.12.3]{weibel}
	\[
	E^1_{p,q} = N^p\K_q(R) \Rightarrow \KH_{p+q}(R).
	\]
This yields a spectral sequence $E^1_{p,q}=N^p\K_q$ on the associated presheaves of abelian groups on $\Sch_X$ and hence a spectral sequence $E^1_{p,q} = \arh N^p\K_q$ of the associated rh-sheafifications. We have to check that the latter one converges to $\arh \KH_{p+q}$. This can be tested on rh-stalks which are filtered colimits of the weakly convergent spectral sequence above. As filtered colimits commute with colimits and finite limits, a filtered colimit of weakly convergent spectral sequence yields a weakly convergent spectral sequence. Hence we are done.
\end{proof}

\begin{thm} \label{K_-d=Hrh^d--thm}
Let $X$ be a $d$-dimensional noetherian scheme. Then there exists a canonical isomorphism
	\[
	\K_{-d}(X) \cong \Hrh^d(X;\Z).
	\]
\end{thm}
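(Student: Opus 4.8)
The plan is to reduce the computation of $\K_{-d}(X)$ to a descent spectral sequence for the homotopy $K$-theory sheaf $\KH\simeq\Lrh\K$ on the rh-site (Theorem~\ref{Lrh_K=KH--thm}), which will degenerate in the corner of interest. Concretely, I would first pass from $\K$ to $\KH$: by \cite{kst} (statement (ii) of the introduction), $\K_{-i}(X)\to\K_{-i}(X\times\A^n)$ is an isomorphism for all $i\ge d$ and $n\ge 1$, that is, $N^p\K_q(X)=0$ whenever $p\ge 1$ and $q\le -d$. Feeding this into the Bass--Weibel spectral sequence $E^1_{p,q}=N^p\K_q(X)\Rightarrow\KH_{p+q}(X)$ (\cite[IV.12.3]{weibel}), one sees that in total degrees $\le -d$ the spectral sequence degenerates onto its $0$-th column, so the canonical map $\K_q(X)\to\KH_q(X)$ is an isomorphism for all $q\le -d$; in particular $\K_{-d}(X)\cong\KH_{-d}(X)$.

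Next I would run the rh-descent spectral sequence for $\KH$. By Theorem~\ref{Lrh_K=KH--thm}, $\KH\simeq\Lrh\K$ is a sheaf of spectra on $\Sch_X^\rh$ whose homotopy sheaves are the rh-sheafifications $\arh\K_q$. Since the rh-topos is hypercomplete and $\Sh^\rh_\Sp(\Sch_X)$ is left-complete (Proposition~\ref{rh-hypercomplete--prop}), and since $X$ has rh-cohomological dimension at most $d$ (the rh-analogue of the classical cdh-bound, obtained by induction over abstract blow-up squares from Grothendieck's bound for the Zariski topology; cf. Remark~\ref{rh-cdh--remark}), the descent spectral sequence
\[
E_2^{p,q}=\Hrh^p\bigl(X;\arh\K_q\bigr)\ \Longrightarrow\ \KH_{q-p}(X)
\]
converges. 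By Lemma~\ref{Lrh=0--lemma} together with Example~\ref{Lrh_K_-i--example}(i) we have $\arh\K_q=0$ for $q<0$, while $\Hrh^p(X;-)=0$ for $p<0$ or $p>d$. Hence the only contribution to total degree $-d$ is $E_2^{d,0}$, which supports no nonzero differential (all possible sources and targets lie in the vanishing range), and therefore $\KH_{-d}(X)\cong E_2^{d,0}=\Hrh^d(X;\arh\K_0)$.

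Finally I would identify $\arh\K_0$ with the constant sheaf $\Z$. The rank map $\K_0\to\underline{\Z}$ onto the sheaf of locally constant $\Z$-valued functions is a split epimorphism of presheaves on $\Sch_X$, split by $n\mapsto[\O^{n}]$, so $\K_0\cong\underline{\Z}\oplus\widetilde{\K}_0$ with $\widetilde{\K}_0:=\ker(\mathrm{rank})$. All Zariski stalks of $\widetilde{\K}_0$ vanish, since finitely generated projective modules over a local ring are free; thus $\aL_\Zar\widetilde{\K}_0=0$, and as $\arh$ factors through $\aL_\Zar$ we get $\arh\widetilde{\K}_0=0$. Applying the exact functor $\arh$ yields $\arh\K_0\cong\arh\underline{\Z}$, so that $\Hrh^d(X;\arh\K_0)=\Hrh^d(X;\Z)$. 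Chaining the three steps gives $\K_{-d}(X)\cong\KH_{-d}(X)\cong\Hrh^d(X;\arh\K_0)\cong\Hrh^d(X;\Z)$.

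The two genuinely non-formal ingredients are the vanishing $\arh\K_q=0$ for $q<0$ — which goes back, via Lemma~\ref{Lrh=0--lemma}, to Kerz--Strunk's theorem that negative $K$-classes become trivial on a proper birational cover — and the finiteness of the rh-cohomological dimension of $X$; I expect that the only real care needed is in setting up the convergence of the descent spectral sequence and checking the corner degeneration, after which the identification of $\arh\K_0$ and the comparison $\K_{-d}\cong\KH_{-d}$ are routine.
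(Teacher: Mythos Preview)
Your proposal is correct and follows essentially the same route as the paper: pass from $\K_{-d}$ to $\KH_{-d}$ via the Bass--Weibel spectral sequence using $\K_{-d}$-regularity and vanishing below $-d$, then run the rh-descent spectral sequence for $\KH\simeq\Lrh\K$ and use $\arh\K_q=0$ for $q<0$ together with the rh-cohomological dimension bound to isolate the corner term $\Hrh^d(X;\arh\K_0)=\Hrh^d(X;\Z)$. Your treatment is in fact somewhat more explicit than the paper's (you spell out the rank-map argument for $\arh\K_0\cong\Z$ and the convergence via hypercompleteness), but the ingredients and logical structure coincide.
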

\begin{proof}
As $\KH$ is an rh-sheaf, the Zariski descent spectral sequence appears as
	\[
	E_2^{p,q} = \Hrh^p(X,\arh (\K_{-q})) \Rightarrow \KH_{-p-q}(X).
	\]
We know the following:
\begin{itemize}
	\item $E_2^{p,q}=0$ for $p>d$ as the rh-cohomological dimension is bounded by the dimension \cite[2.27]{voevodsky-unstable}.
	\item $\arh (\K_{\geq 0})_0 = \Z$ since $(\K_{\geq 0})_0(R) = \K_0(R)=\Z$ for any local ring $R$.
	\item $\K_{-d}(X) \cong \KH_{-d}(X)$ by $\K_{-d}$-regularity and the vanishing of $\K_{-i}$ for $i>\dim(X)$ \cite[Thm.~B]{kst} together with the spectral sequence relating K-theory and KH-theory \cite[IV.12.3]{weibel}.
	\item $\arh \K_{-q} = 0$ for $q>0$ by Lemma~\ref{Lrh=0--lemma} and Example~\ref{Lrh_K_-i--example}.
\end{itemize}
This implies that in the $2$-page only the term $E_2^{d,0} = \Hrh^d(X;\Z)$ contributes on the line $-p-q=-d$ and that already $E_2^{d,0} = E_\infty^{d,0}$ since all differentials of all $E_i^{d,0}$ for $i\geq 2$ come from or go to zero. Hence the theorem follows.
\end{proof}

\bibliography{literature}
\bibliographystyle{amsalpha}
\end{document}